\documentclass[12pt]{amsart}
\usepackage[left=1in,top=1in,right=1in,bottom=1in]{geometry}
\usepackage{mathrsfs}
\usepackage[english]{babel}
\usepackage{amsmath,amsthm,amsfonts,amssymb,epsfig, hyperref}
\usepackage{bbm,booktabs,float,mathtools,siunitx,tikz}
\usepackage[shortlabels]{enumitem}
\usepackage{breqn}
\usepackage{stmaryrd}
\usepackage{relsize}
﻿\usepackage{cancel}
\usepackage{ulem}

\def\Z{\ensuremath\mathbb{Z}}
\def\N{\ensuremath\mathbb{N}}
\def\R{\ensuremath\mathbb{R}}

\def\P{\ensuremath\mathbb{P}}

\def\E{\ensuremath\mathbb{E}}

\numberwithin{equation}{section}
\newtheorem{thm}{Theorem}

\newtheorem{lem}[thm]{Lemma}
\newtheorem{prop}[thm]{Proposition}

\newtheorem{df}[thm]{Definition}
\newtheorem*{rem}{Remark}

\numberwithin{thm}{section}

﻿

\newcommand{\ii}{{\rm i}}

\newcommand{\rd}{{\rm d}}

\newcommand{\re}{{\rm Re}}

\newcommand{\nocontentsline}[3]{}
\let\origcontentsline\addcontentsline
\newcommand\stoptoc{\let\addcontentsline\nocontentsline}
\newcommand\resumetoc{\let\addcontentsline\origcontentsline}
﻿

﻿\newtheorem{cor}[thm]{Corollary}

﻿
﻿

﻿
﻿
﻿
﻿
\begin{document}
\footnotetext[1]{Keywords: Riemann Zeta Function, Level Sets, Large Deviations,$  $ Random walks, Random Matrix Theory}
﻿
﻿
\footnotetext[2]{MSC2000: Primary 11M06, secondary 11M50, 60G50, 60G70, 60B20}
\title[Lower bounds for large deviations of the zeta function on the critical line]{Lower bounds for the large deviations and moments of the Riemann zeta function on the critical line}
\author{Louis-Pierre Arguin}

\address{L.P. Arguin, Mathematical Institute, University of Oxford, UK, and Baruch College and Graduate Center, City University of New York, NY}
\email{louis-pierre.arguin@maths.ox.ac.uk}

\author{Nathan Creighton}
\address{N. Creighton, Mathematical Institute, University of Oxford, UK}
\email{nathan.creighton@maths.ox.ac.uk}
\maketitle

\begin{abstract}
Building on work in \cite{AB24} on the Riemann zeta function at height $T$ off the critical line, we prove an unconditional lower bound on the critical line for real large deviations of the order $V\sim\alpha\log\log T$ for any $\alpha>0.$  This gives another proof of the sharpest known unconditional lower bounds on the fractional moments of the Riemann zeta function, due to \cite{HSlower}. The lower bound on large deviations is of the same order of magnitude  as the upper bound proved in \cite{AB23}, for the range $0<\alpha<2.$
\end{abstract}

\tableofcontents

\section{Introduction}
Ever since Selberg's Central Limit Theorem \cite{Sel} for $\log\zeta\left(\frac{1}{2}+\ii t\right)$, the distribution of values of the Riemann zeta function on the critical line $\re \ s=1/2$ has been a key topic in analytic number theory.
It is of particular interest to understand the extent to which  the Gaussian behaviour persists beyond the standard deviation order, $\sqrt{\log\log T}$.
Theorem 2 in \cite{Radz} states that the measures of the level sets obey the same asymptotic for deviations of $\log|\zeta(\frac{1}{2}+\ii t)|$ up to $\left(\log\log T\right)^{\frac{3}{5}}$.
However, it is the range of the distribution at the order of the variance which determines the growth of the moments of the Riemann zeta function.
This provides the motivation for studying the measure of level sets at this range.
\subsection{Main results}
We prove a lower bound for the measure of level sets of the Riemann zeta function on the critical line at the order of magnitude $\sqrt{\log\log T}.$
\begin{thm}\label{LBLD}
Let $\alpha>0$ and $V\sim \alpha \log\log T.$ Then for $T$ sufficiently large, we have
\begin{equation}
\frac{1}{T}\left|\left\{t\in [0,T]: \log\left|\zeta\left(\frac{1}{2}+\ii t\right)\right|>V\right\}\right|\ge K_{\alpha}\int^\infty_V\frac{e^{-\frac{y^2}{\log\log T}}}{\sqrt{\log\log T}}\rd y,
\end{equation}
with the constant $K_\alpha$ satisfying
\begin{equation}\label{ca}
K_\alpha\gg\left(C\alpha^2\log\alpha\right)^{-\alpha^2},
\end{equation}
 as $\alpha\to\infty,$ for some absolute constant $C$.

\end{thm}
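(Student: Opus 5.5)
The plan is to follow the strategy of \cite{AB24}: reduce to a Dirichlet-polynomial model via a mollifier-type decomposition, and then use a second-moment (Paley--Zygmund) argument in a tilted measure. Fix a sequence of scales $T_0<T_1<\dots<T_L$, with $\log\log T_\ell-\log\log T_{\ell-1}$ of order a large constant (depending on $\alpha$), and $T_L=T^{\theta}$ for a small fixed $\theta$. Write $S_\ell(t)=\sum_{T_{\ell-1}<p\le T_\ell}\re\, p^{-1/2-\ii t}$. Let $S=\sum_\ell S_\ell$, whose variance is $\frac12\log\log T+O(1)$. The set $G$ of good points consists of those $t$ at which $S(t)>V$ and the partial sums $\sum_{j\le\ell}S_j$ stay within a barrier, that is, do not exceed the linear interpolation $\frac{\log\log T_\ell}{\log\log T}V$ by more than a constant. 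Using the twisted exponential moments $\E[e^{\lambda S_\ell}]$, computed via Dirichlet-polynomial mean value theorems (valid because each piece, after Taylor-truncation of the exponential into $\sum_{k\le N_\ell}(\lambda S_\ell)^k/k!$, is a short Dirichlet polynomial), I would show
\[
\frac1T|G|\gg K_\alpha\int_V^\infty \frac{e^{-y^2/\log\log T}}{\sqrt{\log\log T}}\,\rd y .
\]
Under the tilt with $\lambda=2V/\log\log T\approx 2\alpha$, the increments behave like independent Gaussians, and the barrier event has probability bounded below by a constant. This is a ballot/Gaussian estimate; here the constant depends on $\alpha$ only through the Taylor truncation errors and the variance correction terms $\sum_p \lambda^4/p^2$, which produce the factor $(C\alpha^2\log\alpha)^{-\alpha^2}$ when the first scale $T_0$ must be taken with $\log\log T_0\asymp \log\alpha$-sized cutoffs so that $\lambda^2/\sqrt{p}$ is small.

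Next, transfer from $S$ to $\log|\zeta|$. I would write $\zeta(\tfrac12+\ii t)\,M(t)\approx 1$ in mean, where $M$ is a mollifier equal to a truncated $\exp(-\sum_{p\le T_L}p^{-1/2-\ii t})$ (including the prime-square term to fix the mean). The key inputs are twisted second moments of $\zeta$ times short Dirichlet polynomials (Balasubramanian--Conrey--Heath-Brown type, as used in \cite{AB24}). They give
\[
\frac1T\int_T^{2T}|\zeta M-1|^2\,\mathbf 1_G\,w(t)\,\rd t \le \epsilon\,\frac1T\int \mathbf 1_G\, w ,
\]
where $w$ is a smoothed weight for $G$ built out of the truncated exponentials. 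On the critical line one cannot simply use convexity as off the line. Instead I would argue that on $G$, $|\zeta M|\ge \tfrac12$ outside a set of relatively small measure by Chebyshev. Then $\log|\zeta|\ge \re\sum_p p^{-1/2-\ii t}-\log 2$ there, which is $\ge V$ after shifting $V$ by an $O(1)$ that is absorbed into $K_\alpha$.

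The main obstacle is this last step: the mollification estimate must be carried out with the indicator of $G$, which is not a Dirichlet polynomial. I would replace $\mathbf 1_G$ by products of truncated polynomials $\prod_\ell \mathcal D_\ell(t)$ approximating smooth bump functions of $S_\ell$ (via Fourier/Taylor approximation as in \cite{AB23,AB24}). The lengths must be controlled so that the total length stays below $T^{\theta}$, with $\theta$ small enough that the twisted second moment formula applies. It is here that the barrier matters, since it keeps each $S_\ell$ in a range where the Taylor truncation at degree $N_\ell\approx e^{2}\lambda \cdot\mathrm{Var}$ is accurate. Finally, the moment lower bound claimed in the abstract follows by integrating the level-set bound over $V$, but that is not needed here.
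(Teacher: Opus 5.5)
\textbf{Gap: transferring from the good event to $\{\log|\zeta|>V\}$.} Your inequality $\frac1T\int|\zeta M-1|^2\mathbf 1_G w\le\epsilon\,\frac1T\int\mathbf 1_G w$ cannot hold on the critical line. Since $\E[\zeta M\,w]\approx\E[w]$, it would force the twisted mollified second moment to be $(1+O(\epsilon))\E[w]$. On $\re s=\frac12$, however, that moment is of order $\frac{\log T}{\log T_L}\E[w]\asymp\theta^{-1}\E[w]$ for twists supported on primes below $T_L$; this is exactly \eqref{2} in Proposition~\ref{Prop:Twistmoll}. Heuristically, $\log|\zeta M|$ still carries the primes in $(T_L,T]$ and the nearby zeros, so it is not concentrated at $0$. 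Hence Chebyshev cannot give $|\zeta M|\ge\frac12$ on most of $G$. The mechanism ``$\zeta M\approx 1$ in mean square'' is the off-line argument of \cite{AB24} at $\sigma_0=\frac12+\delta/\log T$ with $\delta$ large, so the critical-line difficulty has not actually been addressed.

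The paper replaces this step by a second-moment argument conditional on the good event. It applies Paley--Zygmund to $|\zeta M|^2$ given $G_{\mathscr J}$, using a lower bound on $\E[|\zeta M|^2\mathbf 1(G_{\mathscr J})]$ and an upper bound of the same shape on $\E[|\zeta M|^4\mathbf 1(G_{\mathscr J})]$ (Proposition~\ref{prop:goodprobs}). This gives $|\zeta M|^2\ge\delta\,\E[|\zeta M|^2\mid G_{\mathscr J}]$ on a proportion $\gg(\log T_{\mathscr J}/\log T)^2$ of $G_{\mathscr J}$: a positive proportion, never most of it, but the loss is only polynomial in $\alpha$. Combined with $|M|^2\asymp_\alpha e^{-2L_{\mathscr J}}$ on $G_{\mathscr J}$ (Lemma~\ref{Mollbound}), and with the walk aimed at $V^*=V+\mathscr A(\alpha+1)$ so that $\delta<1$, this yields $\log|\zeta|>V$. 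The route needs the twisted \emph{fourth} mollified moment \eqref{4}. It also needs an \emph{upper} bound on the restricted moments that is sharp up to $e^{O(\alpha)}$. That is why values of $Y_j$ far from the window where $\mathscr D_j^{\pm}$ approximates an indicator cannot be handled by Cauchy--Schwarz. The paper instead uses polynomial tilts $Y_j^{2k_{j,\mathbf u}}$, Lemma~\ref{Lem: Dbound} and Markov's inequality. Your sketch does not control these out-of-window contributions.

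\textbf{Two further steps fail as stated.} First, taking $T_L=T^{\theta}$ with $\theta$ independent of $\alpha$ is not admissible. Under the tilt, $\lambda S_\ell\approx\lambda^2\mathrm{Var}(S_\ell)$, so an accurate truncation of $e^{\lambda S_\ell}$ needs degree $\gg\alpha^2(\log\log T_\ell-\log\log T_{\ell-1})$, not $e^2\lambda\,\mathrm{Var}$. Keeping the twist short then forces $\theta\asymp\alpha^{-2}$, as in \eqref{eqn: T star}; see also the Remark at the end of Section~\ref{Sec: Mainproof}. This truncation is the source of the $\alpha^{-2\alpha^2}$ in $K_\alpha$, because $e^{-V^2/\log\log T^{\theta}}=\theta^{\alpha^2+o(1)}e^{-V^2/\log\log T}$. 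The $(\log\alpha)^{-\alpha^2}$ is the arithmetic factor from the exact tilted expectation $\E[|\mathscr M_0|^{-2\kappa^*}]$ over the small primes. Attributing the constant to Taylor errors and $\sum_p\lambda^4/p^2$ corrections does not produce either factor.

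Second, a one-sided barrier of constant width at $\asymp\log\log T$ equally spaced checkpoints does not have tilted probability bounded below. By the ballot theorem it costs a factor of order $1/\log\log T$ relative to the target order $e^{-V^2/\log\log T}/\sqrt{\log\log T}$. The paper avoids this by placing the checkpoints $n_j$ at iterated-logarithmic spacing and using two-sided barriers of width $10^4\log_{j+2}T$, which cost only $O_\alpha(1)$.
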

A matching upper bound up to a constant was proved unconditionally in \cite{AB23} in the range $0<\alpha<2$.
By using the method of decomposing moments into integrals over measures of level sets as pioneered by Soundararajan \cite{Sound}, we recover the lower bound for all the  fractional moments proved in \cite{HSlower} and \cite{RSlower}, with the same constant. 
\begin{cor}
\label{cor: moments}
Let $\alpha> 0$ be any positive real number and $T$ be sufficiently large.
Then for $K_\alpha$ as above,  \begin{equation}
\int^T_0|\zeta(1/2+\ii t)|^{2\alpha}\rd t \gg K_\alpha T\left(\log T\right)^{\alpha^2}.
\end{equation}
\end{cor}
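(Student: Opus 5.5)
Since $|\zeta(\tfrac12+\ii t)|^{2\alpha}=e^{2\alpha \log|\zeta(\frac12+\ii t)|}$, the layer-cake formula lets us write the moment as an integral over measures of level sets, following Soundararajan \cite{Sound}:
\begin{equation*}
\int_0^T|\zeta(\tfrac12+\ii t)|^{2\alpha}\rd t = 2\alpha\int_{-\infty}^{\infty} e^{2\alpha V}\,\left|\left\{t\in[0,T]:\log|\zeta(\tfrac12+\ii t)|>V\right\}\right|\rd V .
\end{equation*}
The integrand is nonnegative, so we may discard everything except a short window of $V$ around the saddle point. Writing $L=\log\log T$, the Gaussian weight $e^{2\alpha V - V^2/L}$ is maximised at $V=\alpha L$, where it equals $e^{\alpha^2 L}=(\log T)^{\alpha^2}$. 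We therefore restrict to $V\in[\alpha L,\alpha L+\sqrt{L}]$.

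On this window $V\sim\alpha\log\log T$, so Theorem \ref{LBLD} applies. Since the constant $K_\alpha$ depends only on $\alpha$ (and in particular not on the particular $V$ in the window), it gives, uniformly for such $V$,
\begin{equation*}
\frac1T\left|\{\cdots>V\}\right|\ge K_\alpha\int_V^\infty\frac{e^{-y^2/L}}{\sqrt L}\rd y \gg K_\alpha\,\frac{e^{-V^2/L}}{\sqrt L}\cdot \frac{L}{V}\gg_\alpha K_\alpha\frac{e^{-V^2/L}}{\sqrt L}.
\end{equation*}
The middle inequality is the standard Mills-ratio bound $\int_V^\infty e^{-y^2/L}\rd y\gg (L/V)e^{-V^2/L}$, valid because $V/\sqrt L\to\infty$. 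Hence
\begin{equation*}
\int_0^T|\zeta|^{2\alpha}\rd t\gg_\alpha K_\alpha T\int_{\alpha L}^{\alpha L+\sqrt L}\frac{e^{2\alpha V-V^2/L}}{\sqrt L}\rd V .
\end{equation*}

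To evaluate the remaining integral, substitute $V=\alpha L+u\sqrt L$ with $u\in[0,1]$. Completing the square gives $2\alpha V-V^2/L=\alpha^2L-u^2$, so the integral equals $e^{\alpha^2 L}\int_0^1 e^{-u^2}\rd u\gg(\log T)^{\alpha^2}$. This yields $\int_0^T|\zeta|^{2\alpha}\gg K_\alpha T(\log T)^{\alpha^2}$.

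The only delicate point is bookkeeping of the $\alpha$-dependence. The Mills-ratio factor contributes $L/V\cdot\sqrt L/\sqrt L$; more precisely, with the $1/\sqrt L$ normalisation it produces a factor of size $1/\alpha$. This loss can be absorbed into $K_\alpha$, since a factor $1/\alpha$ is negligible against $(C\alpha^2\log\alpha)^{-\alpha^2}$ after adjusting $C$. The statement ``with the same constant'' is understood up to such harmless factors and implied constants. I expect no genuine obstacle here, since all the depth lies in Theorem \ref{LBLD}.
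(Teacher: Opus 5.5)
Your proof is correct and is the Soundararajan-style decomposition of the moment into level-set measures that the paper invokes without further detail; integrating Theorem \ref{LBLD} over the saddle-point window $V\in[\alpha L,\alpha L+\sqrt L]$ is the natural way to carry it out. Two small refinements: the $1/\alpha$ from the Mills ratio is exactly cancelled by the Jacobian factor $2\alpha$ from the layer-cake formula, since $2\alpha\cdot L/(2V)=\alpha L/V\to 1$ on your window, so no adjustment of $C$ is needed; and the uniformity of the ``$T$ sufficiently large'' threshold over the window follows from Theorem \ref{LBLD} being stated for arbitrary $V\sim\alpha\log\log T$, by a routine subsequence argument.
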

In \cite{Sound}, Soundararajan mentions: \textit{Our proof suggests that the dominant contribution to the $2k^{\text{th}}$ moment comes from $t$ such that $\zeta(1/2+\ii t)$ has size $(\log T)^k,$ and this set has measure about $T/(\log T)^{k^2}$.}
Theorem \ref{LBLD} confirms the suggestion that the values of this range do occur with high enough measure to produce the moments.
More work is required to reverse the decomposition of the moments and attain results concerning the measure of level sets, even when the corresponding moments bounds are known. From the lower bound on the fractional moments, Soundararajan requires the Riemann Hypothesis to attain an upper bound as well as the following lower bound large deviation result in \cite[Corollary B]{Sound}:
\begin{equation}
\text{meas}\left\{t\in [0,T]: \left|\zeta\left(\frac{1}{2}+\ii t\right)\right|\ge \left(\log T\right)^k\right\}\ge T(\log T)^{-k^2+o(1)}.
\end{equation}

The main purpose of Theorem \ref{LBLD} is to remove the dependency on the Riemann Hypothesis and sharpen this to attain the unconditional Gaussian decay, but we go further and get constants necessary to retrieve the moments bound in Corollary \ref{cor: moments}.

Note that in \cite{AB24}, the authors produced a lower bound for large deviations shifted off the critical line to real part $\sigma_0=\frac{1}{2}+\frac{\delta(\alpha)}{\log T},$ for some sufficiently large parameter $\delta(\alpha).$ By a convexity argument, this was enough to recover the moment bounds on the critical line, albeit with a worse constant $\gg (C\alpha^{36}\log \alpha)^{-\alpha^2}$.
In contrast, Theorem \ref{LBLD} restores the results to large deviations on the critical line  with the same constant as the sharpest known results for the corresponding lower bounds for the moments.
Conjecture 2 in  \cite{Radz} predicts that, in fact, the constant for the large deviations of the range in Theorem \ref{LBLD} matches that from the conjectures of Keating and Snaith \cite{KS} for the moments.
More precisely, the constant should be the product of an arithmetic factor $a_\alpha$ and a geometric factor $g_\alpha$. As $\alpha\to\infty$, these factors behave like (\cite{CG},\cite{HR})  \begin{equation}\label{eqn: factors}
a_\alpha=(\log\alpha)^{-\alpha^2}\exp(O(\alpha^2)),\qquad g_\alpha=\alpha^{-\alpha^2}\exp(O(\alpha^2)).
\end{equation}
The constant $K_\alpha$ is approximately a square worse than this conjectured value.

\subsection{Results in the $q$-aspect}
The method of proof generalises to many other families of $L$-functions where two distinct twisted moments can be bounded, with a lower bound for the lower order moment and an upper bound for the higher. For example, one can use it to give lower bounds on large deviations of central values of primitive Dirichlet $L$-functions. For this, in place of the twisted second and fourth moments of the Riemann zeta function, we deploy the twisted second \cite{Sarnak},\cite{bui}  and fourth \cite{Zach19} moments of central values of  $L$-functions with even, primitive characters with the same large modulus. This yields the following theorem.
\begin{thm}\label{thm:Q}
Let $\alpha>0,$ $q$ be a large prime, and $W\sim \alpha \log\log q.$ Then
\begin{equation}\label{eqn:qLD}\frac{1}{\varphi^+(q)}\left|\left\{\chi \text{ even primitive modulo }q: \log\left|L\left(\frac{1}{2},\chi\right)\right|\ge W\right\}\right|\ge K_\alpha \int^\infty_W\frac{e^{-\frac{y^2}{\log\log q}}}{\sqrt{\log\log q}}\rd y,\end{equation}
where the constant $K_\alpha$ satisfies the bounds in Equation \eqref{ca}.
\end{thm}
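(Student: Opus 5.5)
The plan is to run the same scheme as for Theorem \ref{LBLD}, with the family of even primitive characters modulo $q$ replacing the $t$-average over $[0,T]$. Set $L=\log\log q$. The twisted second moment \cite{Sarnak},\cite{bui} gives a lower bound, and the twisted fourth moment \cite{Zach19} gives an upper bound. The strategy is a Paley--Zygmund / Cauchy--Schwarz argument restricted to characters on which a short Dirichlet polynomial model $\mathcal{P}(\chi)=\sum_{p\le q^{\varepsilon}}\re\,\chi(p)p^{-1/2}$ (with the prime range split multiscale into blocks $(q^{\varepsilon_{k-1}},q^{\varepsilon_k}]$) follows a prescribed \emph{good path}: each block sum lies in a window around its drift $\frac{W}{L}\sum_{p\in \text{block}}\frac{1}{p}$. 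Tilting by the drift $W/L$ forces $\log|L(\tfrac12,\chi)|$ up to level $W$.

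The key steps, in order, are as follows.

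\textbf{(i) Mollifier-type weights.} Let $G$ be the indicator that the block sums of $\mathcal{P}(\chi)$ follow the good path, with final sum in $[W,W+1]$. Approximate $G$, and also $e^{\mathcal{P}(\chi)}$-type tilting factors $\mathcal{N}(\chi)$, by short Dirichlet polynomials of total length $q^{\theta}$ with $\theta$ small. This uses truncated Taylor expansions of the exponentials and Fourier approximations of the block indicators, and is legitimate because the block variances are bounded on good paths.

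\textbf{(ii) Twisted second moment lower bound.} Evaluate $\sum_\chi L(\tfrac12,\chi)\overline{\mathcal{N}(\chi)}\,G(\chi)$ with the twisted first/second moment formula. The main term is $\approx \varphi^+(q)\,\E[e^{\mathcal{P}}\mathcal{N}G]$ for a random model with independent $\chi(p)$ uniform on the circle, and a Gaussian computation on the random-walk model shows it is $\gg \varphi^+(q)\,e^{W}e^{-W^2/L}/\sqrt{L}$ up to the $\alpha$-dependent constant. Orthogonality of characters modulo $q$ handles the off-diagonal terms provided the total polynomial length is $<q^{1/2-\delta}$. The moment result I would invoke here is the twisted second moment.

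\textbf{(iii) Fourth moment upper bound.} Bound $\sum_\chi |L(\tfrac12,\chi)|^2|\mathcal{N}(\chi)|^2G(\chi)$ by the twisted fourth moment result of \cite{Zach19}, giving $\ll \varphi^+(q)\,e^{2W}e^{-W^2/L}/\sqrt{L}$ times a constant.

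\textbf{(iv) Cauchy--Schwarz.} First remove the characters with $|L(\tfrac12,\chi)|<e^{W}$, or equivalently use $\log|L|\ge W$ on the restriction (this part mirrors the proof of Theorem \ref{LBLD} and uses $|\mathcal{N}|\approx e^{W}$ on $G$). Then apply Cauchy--Schwarz to obtain
\[
\#\{\chi: \log|L(\tfrac12,\chi)|\ge W\}\ge \frac{\bigl|\sum L\overline{\mathcal{N}}G\bigr|^2}{\sum |L|^2|\mathcal{N}|^2G}\gg \varphi^+(q)\frac{e^{-W^2/L}}{\sqrt L}\cdot K_\alpha .
\]
Tracking the $\alpha$-dependence exactly as in the zeta case recovers \eqref{ca}. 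The passage from a point mass to the integral over $[W,\infty)$ follows from the Gaussian tail asymptotic.

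The main obstacle I expect is step (iii). The available twisted fourth moment for Dirichlet $L$-functions permits only short twists (length $q^{\theta}$ with small $\theta$), and the twist must be a product $\mathcal{N}G$ whose coefficients remain controlled. I would first try to take $\varepsilon_k$ so small that the total length fits, and then check that losing the primes beyond $q^{\varepsilon}$ costs only an $\alpha$-dependent constant. That check uses the remaining primes' variance $\log(1/\varepsilon)$, which produces the $(\alpha^2\log\alpha)^{-\alpha^2}$ shape. A further point is that the twisted formulas must hold uniformly for twists of this size when averaged over even primitive characters; I would rely on the stated twisted moments in that range.
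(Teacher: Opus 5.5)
There is a genuine mismatch between your steps (ii)--(iii) and the results you invoke, and it sits at the structural core of the argument. The sum $\sum_\chi L(\frac12,\chi)\overline{\mathcal N(\chi)}G(\chi)$ in (ii) is linear in $L$. It is a twisted \emph{first} moment, not the twisted second moment you say you would use. The sum $\sum_\chi |L(\frac12,\chi)|^2|\mathcal N(\chi)|^2G(\chi)$ in (iii) is a twisted \emph{second} moment, and \cite{Zach19}, which concerns $\sum_\chi |L(\frac12,\chi)|^4|A(\chi)|^2$, does not bound it.

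The paper works one level higher. It reruns the proof of Theorem \ref{LBLD}, taking the block M\"obius mollifier $M=\prod_j M_j$, for which Lemma \ref{Mollbound} gives $|M|^2\asymp_\alpha e^{-2L_{\mathscr J}}$ on $G_{\mathscr J}$. It then applies Paley--Zygmund \eqref{eqn:Paley-Zygmund} to $Z=|L(\frac12,\chi)M(\chi)|^2$ conditionally on $G_{\mathscr J}$. In that scheme $\E[Z\mathbf 1(G_{\mathscr J})]$ is a twisted second moment (\cite{Sarnak}, \cite{bui}) and $\E[Z^2\mathbf 1(G_{\mathscr J})]$ is a twisted fourth moment (\cite{Zach19}). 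The resulting Paley--Zygmund ratio is $\gg (\log q^*/\log q)^2$ with $q^*=q^{1/(\mathscr R(\alpha^2+1))}$, which is polynomial in $\alpha$.

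Your Cauchy--Schwarz, carried out consistently, would be a legitimate alternative needing only twisted first and second moments, so the fourth moment and the obstacle you single out in (iii) would disappear. But then the weight $L\overline{\mathcal N}$ is complex. You therefore cannot sandwich the good-event indicator between $|\mathscr D^\pm|^2$ as the paper does in \eqref{eqn:boundindic}, where that step relies on the nonnegative weight $|\zeta M|^2$. You would have to keep the polynomial twist itself and control, by mean-value estimates, both $\sum_{|L|<e^W}|L\,\mathcal N\, G|$ and the mass of the twist off the good event.

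Three further points need repair.
\begin{itemize}
\item \textbf{The target must be shifted.} With the path ending in $[W,W+1]$ and $|\mathcal N|\approx e^{W}$ on $G$, the part removed in (iv) is bounded only by $e^{W}\sum_\chi|\mathcal N|G$. Both that bound and the main term are of size about $e^{2W}$ times the measure of $G$. Since the main term is only known up to $e^{O(\alpha)}$ factors, nothing provably survives. The paper aims the walk at $V^*=V+\mathscr A(\alpha+1)$, see \eqref{eqn: V tilde}. This places the threshold $e^{-O(\alpha)}$ below the conditional mean and gives $\delta\le e^{-1}$ in \eqref{eqn:newdelta}.
\item \textbf{The constant \eqref{ca} has two sources.} Truncating at $q^{1/(C\alpha^2)}$ costs only $(C\alpha^2)^{-\alpha^2}$. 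The factor $(\log\alpha)^{-\alpha^2}$ is the arithmetic factor from the small primes, where the walk at drift $\alpha$ is far from Gaussian. A Gaussian computation there overestimates the probability and cannot yield a lower bound. The paper isolates $\mathscr P_0$, keeps the untruncated product $M_0$ with a long twist, and evaluates the exponential tilt exactly via Lemma 12 of \cite{AB24}.
\item \textbf{The real difficulty is off-window control.} The polynomials replacing the block indicators approximate them only for $|Y_j-u_j|\le X_j$, while the moment formulas average over every character; restricting to good paths does not settle this. The paper handles it with three tools: the polynomial tilt $Y_j^{2k_{j,\mathbf u}}$ of \eqref{keqn}, inclusion--exclusion with Markov's inequality at scale $X_j$, and Lemma \ref{Lem: Dbound}. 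The tilt is also what keeps $\Delta_j$ independent of $\alpha$, which permits truncation as late as $q^*$.
\end{itemize}
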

This matches the upper bound proven in \cite[Theorem 1.1]{AC25} when $0<\alpha<1.$
In \cite[Section 7]{Sarnak}, it is shown that, when ranging over the even primitive characters with a large modulus $q,$ a positive proportion of central $L$-values are $\gg (\log q)^{-\frac{1}{2}}$ (and hence don't vanish). By using the Cauchy-Schwarz inequality to compare the first and second mollified moments of the central values, they conclude these results couldn't simultaneously hold without a positive  proportion of large central values. This theme runs throughout the proof of Theorem \ref{LBLD}, where we use the Cauchy-Schwarz inequality in the form of the Paley-Zygmund inequality (see \cite[Section 2.3.1]{Roch24}), which states that for a non-negative random variable $Z$ with finite second moment and a parameter $0<\delta<1,$ one has
\begin{equation}\label{eqn:Paley-Zygmund}
\P\left(Z\ge\delta \E\left[Z\right]\right)\ge (1-\delta)^2\frac{\E[Z]^2}{\E[Z^2]}.
\end{equation}

Theorem \ref{thm:Q} is consistent with  the Central Limit Theorem for central $L$-values with a large fixed modulus, see for example \cite[Theorem 4.2.1]{Dasthesis}. As in Corollary \ref{cor: moments}, this yields lower bounds on all the positive fractional moments of the central Dirichlet $L$-values with fixed modulus.
\begin{cor}
Let $q$ be a large prime and $\alpha> 0$ be any positive real number. Then for $K_\alpha$ as in Equation \eqref{ca},
\begin{equation}
\frac{1}{\varphi^+(q)} \sum_{\chi \text{ even primitive mod }q} \left|L\left(\frac{1}{2},\chi\right)\right|^{2\alpha} \gg  K_\alpha \left(\log q\right)^{\alpha^2}.
\end{equation}
\end{cor}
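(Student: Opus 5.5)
The plan is to deduce the corollary from Theorem \ref{thm:Q} by a layer-cake decomposition, as in Corollary \ref{cor: moments}. Write $\mathcal{F}$ for the set of even primitive characters modulo $q$, $L=\log\log q$, and
\[
M(W)=\frac{1}{\varphi^+(q)}\left|\left\{\chi\in\mathcal F:\log\left|L\left(\tfrac12,\chi\right)\right|\ge W\right\}\right|.
\]
For nonnegative $X$ we have $\E[e^{2\alpha X}]\ge\int_{-\infty}^{\infty}2\alpha e^{2\alpha W}\P(X\ge W)\,\rd W$, where characters with $L(\frac12,\chi)=0$ contribute zero. Applying this with $X=\log|L(\frac12,\chi)|$ and the uniform measure on $\mathcal F$ gives
\[
\frac{1}{\varphi^+(q)}\sum_{\chi}\left|L\left(\tfrac12,\chi\right)\right|^{2\alpha}\ \ge\ 2\alpha\int_{\alpha L}^{(\alpha+\epsilon)L} e^{2\alpha W}M(W)\,\rd W .
\]
Here we discard every $W$ outside a window of width $\epsilon L$ just above the critical point $\alpha L$, with $\epsilon=\epsilon(\alpha)$ small and fixed.

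For every $W$ in the window we have $W\sim\alpha' L$ with $\alpha'\in[\alpha,\alpha+\epsilon]$. We apply Theorem \ref{thm:Q} with $\alpha'$ in place of $\alpha$. The Gaussian tail satisfies
\[
\int_W^\infty e^{-y^2/L}L^{-1/2}\,\rd y\asymp \frac{\sqrt L}{W}e^{-W^2/L}\asymp_\alpha L^{-1/2}e^{-W^2/L}.
\]
This gives $M(W)\gg K_{\alpha'}L^{-1/2}e^{-W^2/L}$.

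One technical point is that Theorem \ref{thm:Q} is stated for a fixed $\alpha$ with $W\sim\alpha L$. We need it uniformly over the window, which should follow from the proof, since the constants depend continuously on $\alpha$. Alternatively, one can use monotonicity of $M$: set $M(W)\ge M(W_j)$ on finitely many subintervals $[W_{j-1},W_j]$, and apply the theorem at the finitely many fixed values $\alpha_j=W_j/L$. Each $K_{\alpha_j}$ is comparable to $K_\alpha$ with the same asymptotic form as in \eqref{ca}.

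It then remains to evaluate
\[
2\alpha K_\alpha L^{-1/2}\int_{\alpha L}^{(\alpha+\epsilon)L}e^{2\alpha W-W^2/L}\,\rd W .
\]
Substitute $W=\alpha L+u$. The exponent becomes $\alpha^2L-u^2/L$, and $e^{\alpha^2L}=(\log q)^{\alpha^2}$. The range $0\le u\le\sqrt L$ contributes $\gg\sqrt L$, which cancels the factor $L^{-1/2}$. The result is $\gg \alpha K_\alpha(\log q)^{\alpha^2}$, and $\alpha K_\alpha$ has the same form as $K_\alpha$ in \eqref{ca}.

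The only real obstacle is the uniformity of Theorem \ref{thm:Q} in $\alpha$ over the window. Since only the range $u\le\sqrt L$ is needed, we only need $W\in[\alpha L,\alpha L+\sqrt L]$, and all such $W$ satisfy $W\sim\alpha L$ for the single fixed $\alpha$. So Theorem \ref{thm:Q} applies directly, provided its implicit $o(1)$ in $W\sim\alpha L$ is taken uniformly. If it is not, the monotonicity trick above handles it: use the bound at the right endpoint $W=\alpha L+\sqrt L$, which still satisfies $W\sim\alpha L$, and get $M(W)\ge M(\alpha L+\sqrt L)\gg K_\alpha L^{-1/2}e^{-\alpha^2 L-2\alpha\sqrt L-1}$ for all $W\le\alpha L+\sqrt L$. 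Integrating $e^{2\alpha W}$ over $W\in[\alpha L,\alpha L+\sqrt L]$ gives $\asymp e^{2\alpha^2L+2\alpha\sqrt L}/(2\alpha)$ for $L$ large, since the top of the interval dominates. The factors $e^{\pm2\alpha\sqrt L}$ then cancel exactly. What remains is $K_\alpha L^{-1/2}(\log q)^{\alpha^2}$, which is short of the target by a factor $\sqrt L$. So this shortcut loses $\sqrt L$, and a genuine window of width $\asymp\sqrt L$ with pointwise bounds is needed. I would therefore verify that the proof of Theorem \ref{thm:Q} gives the bound uniformly for $W\in[\alpha L,\alpha L+\sqrt L]$. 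I expect this to hold, because the Paley–Zygmund argument depends on $W$ only through the choice of twist parameters.
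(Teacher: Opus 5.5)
Your proposal is correct and follows the route the paper indicates: Soundararajan's decomposition of the moment as $2\alpha\int e^{2\alpha W}M(W)\,\rd W$, with Theorem \ref{thm:Q} applied on the window $W\in[\alpha\log\log q,\ \alpha\log\log q+\sqrt{\log\log q}]$, where the integrand is $\gg K_\alpha(\log q)^{\alpha^2}/\sqrt{\log\log q}$. The uniformity you flag does not require going back into the proof: $K_\alpha$ depends only on $\alpha$, so if the bound failed for infinitely many $q$ at some $W_q$ in that window, then $W(q)=W_q$ (and $W(q)=\alpha\log\log q$ for all other $q$) would satisfy $W\sim\alpha\log\log q$ and contradict the theorem as stated.
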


\subsection{Organisation of the proof}
The proof of Theorem \ref{LBLD} relies on the recursive scheme of \cite{ABR20}, as in  \cite{AB23} and \cite{AB24}.
The method approximates $\log |\zeta|$ by the real parts of Dirichlet polynomials using an appropriate mollification on the critical line.
In order to compute large moments of such polynomials, we need to restrict  the associated partial sums.
This is explained in detail in Section \ref{sect: walks}.
To attain a sharp lower bound on the critical line, we use the Paley-Zygmund inequality on mollified $\zeta$.
The bound then follows by computing twisted second and fourth moments, cf.~Proposition \ref{prop:goodprobs}.
This is the most technical part of the paper since to improve the constant it is necessary to work with a tilted probability measure.
Proposition \ref{prop:goodprobs} is proved in Section \ref{Sec: Mainproof}.
One ingredient is Proposition \ref{Prop:Twistmoll}, which gives precise estimates for moments of $\zeta$ twisted not by a Dirichlet polynomial, but rather a polynomial in the real part of a Dirichlet polynomial.
The proof of this auxiliary result is given in  Appendix \ref{Sec:Twistmoll}. Appendix \ref{Sec:Steinprob} gathers results on moments of the random Steinhaus model needed in the proof.

\subsection{Notation}
Throughout this paper, we assume that $t$ is uniformly distributed in the interval $[0,T]$ and write $\P$ and $\E$ for the corresponding expectation.

We make no effort to specify the absolute constant $C$ in Theorem \ref{LBLD}, and so for positive functions $f$ and $g$  we write  $f\ll_\alpha g,$ to denote that for  each fixed $\alpha$ there is a constant $C(\alpha)$  such that for all sufficiently large $T$, we have  $ f \le C(\alpha)g,$
and moreover there is an absolute constant $k,$ such that for all sufficiently large $\alpha,$  we have
\begin{equation}
\label{eqn: convention}
C(\alpha)\le \exp\left(k\alpha \right).
\end{equation}
We then use $f\asymp_\alpha g$ to denote $f\ll_\alpha g\ll_\alpha f.$

\stoptoc

\subsection{Acknowledgements} L.-P. Arguin is supported by the grants NSF DMS 2153803 and EPSRC EP/Z535990/1. N. Creighton is supported by EPSRC grant EP/W524311/1. N. Creighton would like to thank Madhurpana Das for her encouragement, and making available her thesis which contained results on Central Limit Theorems over a range of families of $L$-functions. Both authors would like to thank Jon Keating for his encouragement throughout the project, and Jad Hamdan for his thorough review of a previous draft. For the purpose of Open Access, the authors have applied a CC BY public copyright licence to any Author Accepted Manuscript (AAM) version arising from this
submission

\resumetoc

\section{Method of Proof}
\label{sect: method}

\subsection{The partial sums and their restrictions}
\label{sect: walks}
In this section, we explain the partial sums considered in \cite{AB23}, \cite{AB24} and \cite{AC25}.
In order to control the logarithm of $\zeta$, we define for $\eta>0$ the partial sums:
\begin{align}
S^{(1)}(\eta,t)=\sum_{\log\log p<\eta} \frac{1}{p^{{\frac{1}{2}+it}}},\qquad
S^{(2)}(\eta,t)=\sum_{\log\log p<\eta} \frac{1}{2p^{{1+2it}}}.
\end{align}
Then the combined approximation for $\log\zeta(\frac{1}{2}+it)$,  discarding the bounded contribution of powers of primes above squares, is given by:
\begin{equation}\label{eqn:Partialsum}
S(\eta,t)=S^{(1)}(\eta,t)+S^{(2)}(\eta,t).
\end{equation}
We expect the size of $S(\eta,t)$ to be largely influenced by $S^{(1)}(\eta,t).$
The  largest primes considered in the above sums will be up to
\begin{equation}
\label{eqn: T star}
T^\ast=T^{\frac{1}{\mathscr{R}(\alpha^2+1)}},
\end{equation}
for some large absolute constant $\mathscr{R}$. As will be explained in Section \ref{sect: proof thm}, the choice of $T^\ast$ is dictated by the  largest primes whose distribution we can control using the optimal choice for the degrees of tilts $k_{j,\mathbf{u}}$ in Equation \eqref{keqn}. We cannot show that, if the growth up to $p\le T^\ast$ obeys the linear growth at the log-log scale, then there is a chance of this persisting up to $p\le T$ for a large deviation of size $V$, since we have no control of the primes beyond $T^\ast.$ The proof instead shows that if the partial sum up to $p\le T^\ast$ is significantly larger than $V$, say

\begin{equation}\label{eqn: V tilde}
{V^*}=V+\mathscr{A}(\alpha+1)
\end{equation}  
for some large constant $\mathscr{A},$ then there is a large probability of the `full  sum' for $\log \left|\zeta\left(\frac{1}{2}+\ii t\right)\right|$ not falling below $V$. Because we have to truncate at $T^\ast$ rather than $T$, and aim for partial sums of size ${V^*}$ rather than $V$, we do not expect this to give a sharp value for the constant $K_\alpha$.

We divide the intervals of primes $p\leq T^*$ on the log-log scale at values $n_j,$ and study the partial sums on the intervals \begin{equation}\label{defPj}
\mathscr{P}_j=\{p \text{ prime: }\log\log p\in [n_{j-1},n_j)\}.
\end{equation}
The checkpoints $n_j$ are defined as follows.
Let $\mathscr{J}$ be the largest integer for which $\log_{\mathscr{J}+2}T>1000.$
For the scaling factor, $\mathbf{s}=10^6$, we set  for $1\le j\le \mathscr{J}$
\begin{equation}\label{eqn:logtime}
n_j=\log\log T^\ast-\mathbf{s}\left(\log_{j+2}T-\log_{\mathscr{J}+2}T\right), \quad T_j=e^{e^{n_j}}= T^{\frac{1}{\mathscr{R}(\alpha^2+1)} \frac{\log^{\mathbf{s}}_{\mathscr{J}+1}}{\log^{\mathbf{s}}_{j+1}}}.
\end{equation}
Note that $T_{\mathscr J}=T^*$.
For the small primes, we let $\mathscr{P}_0$ be the set of primes with $\log\log p$ between $$n_{-1}=-100,\quad \text{and}\quad n_{0}=\frac{\log_4 T}{1000}.$$
For the steps $n_j$ for $0\le j\le \mathscr{J},$ we define
\begin{equation}\label{def: Sj}
S_j(t)=\re(S(n_j,t)),
\end{equation}
to be the real parts of the partial sums defined in Equation \eqref{eqn:Partialsum} evaluated at the ends of the intervals.
These partial sums should behave like a random walk for $1\le j\le \mathscr{J}$. With this in mind, we will show that for most values of $t$ with $\log|\zeta(1/2+\ii t)|>V^*$, the partial sums $S_j(t)$ should lie in a small window around a line in the log-log scale for primes, with approximate gradient \begin{equation}\kappa^*=\frac{V^*}{n_{\mathscr{J}}}.\end{equation}  
 
More precisely, to control the small primes in $\mathscr{P}_0$, we take the mollifier
\begin{equation}
M_0(t)=\sum_{p\mid n\Rightarrow p\in \mathscr{P}_0}\frac{\mu(n)}{n^{\frac{1}{2}+\ii t}},
\end{equation}
and the restriction
\begin{equation}
G_0=\left\{\left|M_0(t)\right|^2\in [L_0,U_0]\right\},
\end{equation}  
with
\begin{equation}
L_0=\exp\left(-2\kappa^\ast n_0\right) \text{ and } U_0=\exp\left(-2\kappa^\ast n_0+n_0^{\frac{2}{3}}\right).
  \end{equation}
 These bounds represent an interval for  $|M_0(t)|^2\approx \exp\left(-2 S_0(t)\right)$ which has large probability given that $S_{\mathscr{J}}(t)>{V^*}$.
 
To get precise estimates, the restrictions for $S_j(t)$, $j\geq 1$, must depend on the value of $\left|M_0(t)\right|^2$.
We thus define the approximate gradient of the remainder of the walk to be
\begin{equation}
\label{eqn: kappa}
\kappa[z]=\frac{{V^*}-z}{n_{\mathscr{J}}-n_0},\  z>0.
\end{equation}
For  $1\le j\le \mathscr{J}$, the barrier for the partial sum at $n_j$ is then prescribed by
 \begin{equation}\label{eqn:barrier}
 \begin{aligned}
L_j(z)&=  z+\kappa[z] (n_j-n_{0})-10^4\log_{j+2} T,\\
U_j(z)&=z+ \kappa[z] (n_j-n_{0})+10^4\log_{j+2} T.
\end{aligned}
\end{equation}

In order to get a discrete set of intervals to sum over, we divide the interval $[L_0,U_0]$ into intervals of length $\Delta_0^{-1}$ where
\begin{equation}\label{d0} \Delta_0=e^{100\max \{\alpha,1\} n_0}.\end{equation}
Given a value of $\left|M_0(t)\right|^2$ lying in $[L_0,U_0]$, we define the event that the partial sum is good all the way up to $T^\ast$ as 
\begin{equation}\label{eqn:G}
G_{\mathscr{J}}= G_0\cap\left\{ S_j(t)\in [L_j\left(\mathfrak{n}\Delta_0^{-1}\right), U_j\left(\mathfrak{n}\Delta_0^{-1}\right)]\quad \forall 1\le j\le \mathscr{J},\quad \text{where } \mathfrak{n}=\lfloor -\Delta_0 \log \left|M_0(t)\right|\rfloor\right\}.
\end{equation}
We will henceforth omit the implicit dependence of $L_j(z)$ and $U_j(z)$ on the starting point $z$, since it is only at $n_1$ that there could be a significant deviation for large $T$.

For $1\le j\le \mathscr{J}$, we set the mollifiers to be: \begin{equation}
M_j(t)=\sum_{n} \frac{e_j(n)}{n^{\frac{1}{2}+it}},
\end{equation}
where $e_j(n)=\mu (n)
$ if $p|n\Rightarrow p\in \mathscr{P}_j$ and $\Omega_j(n)\le 5\alpha^2 (n_j-n_{j-1})^{10^5}$, and zero otherwise.
The complete mollifier is then \begin{equation}
\label{eqn: mollifier}
M(t)=\prod_{j=0}^{\mathscr{J}} M_j(t).
\end{equation}
The length of this Dirichlet polynomial is $\ll T^{\frac{1}{1000}}$ for a suitably large choice $\mathscr{R}$ in \eqref{eqn: T star}. It will transpire that for the small primes in $\mathscr{P}_0,$ we don't have strong Gaussian behaviour and so the twists must be long and the mollifier $M_0$ cannot be truncated. Conversely, for large $j$, the mollifier must be short to give a short twist, but this is permissible since the prime phases exhibit stronger Gaussian behaviour.
A tight control of the partial sums lying in the barrier corresponds to  pointwise bounds on the mollifier.
\begin{lem}\label{Mollbound}
For $t\in G_{\mathscr{J}}$,  we have  $\left|M(t)\right|^2\asymp_\alpha e^{-2L_{\mathscr{J}}}.$
\end{lem}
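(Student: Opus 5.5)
Since $M=\prod_{j=0}^{\mathscr J}M_j$, we have $|M(t)|^2=|M_0(t)|^2\prod_{j\ge1}|M_j(t)|^2$, and I will bound each factor separately. The factor $|M_0(t)|^2$ is pinned down directly by $G_0$ and the discretisation: with $\mathfrak n=\lfloor -\Delta_0\log|M_0(t)|\rfloor$ and $z=\mathfrak n\Delta_0^{-1}$, we have $|M_0(t)|^2=e^{-2z+O(\Delta_0^{-1})}\asymp e^{-2z}$. For $j\ge1$, the untruncated Euler product is $\prod_{p\in\mathscr P_j}(1-p^{-1/2-\ii t})$. Its logarithm is
\[
-\sum_{p\in\mathscr P_j}p^{-1/2-\ii t}-\sum_{p\in\mathscr P_j}\tfrac12p^{-1-2\ii t}+O\Bigl(\sum_{p\in\mathscr P_j}p^{-3/2}\Bigr).
\]
Taking twice the real part, the main terms give $-2(S_j(t)-S_{j-1}(t))$. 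The cubic and higher prime powers contribute $O(\sum_{p>T_{j-1}}p^{-3/2})$, which is summable over $j$.

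The key step is to show that, on $G_{\mathscr J}$, the truncation $\Omega_j(n)\le 5\alpha^2(n_j-n_{j-1})^{10^5}$ changes $M_j$ only by a factor $1+O(\varepsilon_j)$ with $\sum_j\varepsilon_j<\infty$, uniformly in $\alpha$ in the sense of \eqref{eqn: convention}. I would write $M_j(t)=\sum_{k\le K_j}\frac{(-1)^k}{k!}\sum^{\ast}_{p_1,\dots,p_k\in\mathscr P_j\ \text{distinct}}(p_1\cdots p_k)^{-1/2-\ii t}$, where $K_j=5\alpha^2(n_j-n_{j-1})^{10^5}$. Via Newton's identities, the degree-$k$ elementary symmetric sum equals the $k$-th Taylor coefficient of $\exp(-P_1+\tfrac12P_2-\dots)$ in the power sums $P_m=\sum_{p\in\mathscr P_j}p^{-m(1/2+\ii t)}$.

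On $G_{\mathscr J}$ the increment $|S_j-S_{j-1}|$ is at most $\kappa[z](n_j-n_{j-1})+2\cdot10^4\log_{j+1}T$. We have $\kappa[z]\ll\alpha+1$, and $n_j-n_{j-1}\approx\mathbf s\log_{j+1}T$, so the increment is $O((\alpha+1)(n_j-n_{j-1}))$. This controls only the real part of $P_1$. The imaginary part is not controlled pointwise by the barrier, so I would bound $|P_1|\le\sum_{p\in\mathscr P_j}p^{-1/2}$ trivially only for small $j$. For general $j$, I would instead compare the truncated exponential $\sum_{k\le K}w^k/k!$ with $e^{w}$, using $|w|^{K}/K!\le (e|w|/K)^K$.

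This comparison requires $|P_1|\ll K_j/e^2$. Since $K_j$ is a huge power $(n_j-n_{j-1})^{10^5}$, it suffices to know $|P_1(t)|\le (n_j-n_{j-1})^{C}$. I expect this to be the main obstacle: the barrier bounds only $\re P_1$, so the argument needs either a larger event or a device giving a bound on $|S^{(1)}|$ as well. If the lemma is truly pointwise on $G_{\mathscr J}$ as defined, I would fall back on the trivial bound $|P_1|\le\sum_{p\le T_j}p^{-1/2}$. That bound is far too large, which suggests the intended argument uses the real-part control together with $|e^{w}|=e^{\re w}$ for the main term, and treats the tail $\sum_{k>K_j}$ as negligible only after restricting to $|P_1|$ moderate. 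I would add that restriction implicitly, or verify it from the construction of $G_{\mathscr J}$ in \cite{AB24}. Failing that, I would argue the tail is small because the coefficients are restricted to squarefree $n$ with $p\mid n\Rightarrow p\in\mathscr P_j$, giving an error $\ll e^{-K_j}$ relative to $e^{-2(S_j-S_{j-1})}$.

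Granting this, I obtain $|M_j(t)|^2=e^{-2(S_j-S_{j-1})}(1+O(\varepsilon_j))$. Multiplying over $j$ gives $|M(t)|^2\asymp e^{-2z}e^{-2(S_{\mathscr J}-S_0)}$. Here I identify $z$ with $S_0$ up to $O(1)$, since that is the meaning of $|M_0|^2\approx e^{-2S_0}$. Finally, on $G_{\mathscr J}$ we have $S_{\mathscr J}(t)\in[L_{\mathscr J},U_{\mathscr J}]$, and $U_{\mathscr J}-L_{\mathscr J}=2\cdot10^4\log_{\mathscr J+2}T=O(1)$ by the choice of $\mathscr J$. Hence $e^{-2S_{\mathscr J}}\asymp e^{-2L_{\mathscr J}}$, with constants $\exp(O(\alpha+1))$ coming from the accumulated $O(1)$ errors times the gradient. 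This gives the lemma.
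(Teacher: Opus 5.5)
\textbf{Overall.} Your skeleton is the same as the paper's. You factor $|M|^2=\prod_j|M_j|^2$, read $|M_0|^2$ off $G_0$, compare each truncated $M_j$ with $\prod_{p\in\mathscr P_j}(1-p^{-1/2-\ii t})$, telescope to $e^{-2S_{\mathscr J}}$, and use $U_{\mathscr J}-L_{\mathscr J}=O(1)$. The paper's proof does no more than import per-factor pointwise bounds from \cite[Lemma 23]{ABR20} and multiply them. Your hedge about $z$ is unnecessary: $M_0$ is the untruncated Euler product over all $p\le T_0$, so $-\log|M_0|=S_0+O(1)$ pointwise, and hence $z=S_0+O(1)$.

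\textbf{The gap.} It is the per-factor bound for $j\ge1$, which you flag and then grant. Neither of your proposed fixes works.
\begin{itemize}
\item $G_{\mathscr J}$, as constructed in this paper, constrains only $S_j=\re S(n_j,t)$. So there is nothing to ``verify from the construction''.
\item The fallback is false. Squarefreeness and support in $\mathscr P_j$ say nothing about the size of the tail $\sum_{k>K_j}(-1)^ke_k$, where $e_k$ is the $k$-th elementary symmetric function of $\{p^{-1/2-\ii t}\}_{p\in\mathscr P_j}$.
\end{itemize}
Concretely, suppose $|{\rm Im}\,P_1|\ge 10K_j$, which the real-part barrier does not exclude. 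Then the truncated sum is dominated by its top-degree terms and has size roughly $|P_1|^{K_j}/K_j!$, vastly larger than $e^{-\re P_1}$. At such $t$ even the upper bound $|M|^2\ll_\alpha e^{-2L_{\mathscr J}}$ fails, and that is exactly the direction used in the proof of Theorem \ref{LBLD}.

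\textbf{What is missing.} You need a bound on the complex increment, e.g.\ $|P_1|\le K_j/(10e^2)$; this is implied by $|P_1|\le (n_j-n_{j-1})^{C}$ with, say, $C\le 10^4$. Granting it, your argument closes:
\begin{itemize}
\item Newton's identities give the majorant $|e_k|\le [u^k]\exp(\sum_m |P_m|u^m/m)$.
\item A Cauchy estimate on $|u|=e^2$ then gives $|e_k|\le e^{-k}$ for $k>K_j$. This uses $|P_2|\le n_j-n_{j-1}+o(1)$ and $|P_m|\le\sum_{p>T_{j-1}}p^{-m/2}$ for $m\ge3$.
\item This tail is negligible against $|\prod_{p\in\mathscr P_j}(1-p^{-1/2-\ii t})|=e^{-(S_j-S_{j-1})+o(1)}$, and the rest of your argument goes through.
\end{itemize}
However, this condition has to be added to the good event, and its complement shown negligible in the later twisted-moment computations. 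It cannot be extracted from $G_{\mathscr J}$.

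The paper's one-line proof does not derive it from $G_{\mathscr J}$ either; it simply applies the estimate of \cite[Lemma 23]{ABR20} ``conditional on $G_{\mathscr J}$''. So what you have is a correct reduction to this extra hypothesis, not a proof of the lemma as stated.
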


\begin{proof}
The proof follows from modifying the pointwise bounds on each section $M_j(t)$ of the mollifier from \cite[Lemma 23]{ABR20} conditional on the good event $G_{\mathscr{J}}.$ Taking the product of these pointwise bounds over all the intervals $\mathscr{P}_j$ for $0\le j\le \mathscr{J}$ completes the proof.
\end{proof}

\subsection{Proof of Theorem \ref{LBLD}}
\label{sect: proof thm}
We assume $\alpha\ge 1$. An easy adaptation of the method yields the bounds with $K_\alpha\asymp 1$ as $\alpha\to 0^+$.
From here on in, we remove the implicit dependence of $\zeta$ and $M$ on $t,$ to ease the notation.
Restricting to the event $G_{\mathscr{J}}$, we have the following lower bound on $\P(\log |\zeta|>V)$.
\begin{equation}
\label{eqn: proof1}
\P\left(\left\{\log |\zeta|>V\right\}\cap G_{\mathscr{J}}\right)=\P(|\zeta|^2>e^{2V}| G_{\mathscr{J}})\cdot \P(G_{\mathscr{J}})\geq \P(|\zeta M|^2>e^{2(V-L_{\mathscr{J}})}| G_{\mathscr{J}})\cdot \P(G_{\mathscr{J}}).
\end{equation}
A lower bound on the conditional probability can be attained by using the Paley-Zygmund inequality \eqref{eqn:Paley-Zygmund}:
\begin{equation}
\label{eqn: proof2}
\P(|\zeta M|^2>e^{2(V-L_{\mathscr{J}})}| G_{\mathscr{J}})\geq (1-\delta)^2 \frac{\E[|\zeta M|^2| G_{\mathscr{J}}]}{\E[|\zeta M|^4| G_{\mathscr{J}}]}^2,
\end{equation}
where 
\begin{equation} \label{eqn:newdelta}
	\delta=\frac{\exp({2V-2L_{\mathscr{J}}})}{\E[|\zeta M|^2| G_{\mathscr{J}}]}.
\end{equation}

By the definition of $T_{\mathscr{J}}$ and of the mollifiers, we expect that the event $G_{\mathscr{J}},$ which only depends on the primes $p\le T_{\mathscr{J}},$ will decorrelate from the mollified value  $\left|\zeta M\right|^2.$ Thus we should have that the above is
\begin{align}
&\left(1-\delta\right)^2  \frac{\E\left[\left|\zeta M\right|^2\right]^2}{\E\left[\left|\zeta M
\right|^4\right]}\asymp_\alpha \nonumber \frac{(\log T_{\mathscr{J}})^2}{(\log T)^2 } \left(1-\delta\right)^2 \gg_\alpha 1.
\end{align}
To ensure $\delta<1$, since  $L_{\mathscr{J}}={V^*}-O(1)$, we need to take $V^*$ slightly larger than $V$.

It remains to evaluate $\P(G_{\mathscr{J}})$. Here, we can resort to Gaussian approximation results for the distribution of the sum of the random variables at each prime for all the intervals $\mathscr{P}_j$ by approximating indicator functions by polynomials as in \cite{AB24}.  The resulting Gaussian approximation results  apply asymptotically for the intervals $\mathscr{P}_j,$ for $ j\ge 1,$ but for the small primes in $\mathscr{P}_0,$  we also pick up the arithmetic factor $a_\alpha$ from Equation \eqref{eqn: factors}. So we expect:
\begin{equation}
\P(G_{\mathscr{J}})\asymp_\alpha a_\alpha \frac{e^{-\frac{{V^*}^2}{\log\log T_{\mathscr{J}}}}}{\sqrt{\log\log T_{\mathscr{J}}}}\asymp_\alpha a_\alpha e^{-2\alpha^2\log \alpha +(2\mathscr{A}+4\gamma)\alpha^2 }\cdot \frac{e^{- \frac{V^2}{\log\log T}}}{\sqrt{\log\log T}}.
\end{equation}
However, we cannot pass directly to the random Steinhaus model using indicator functions $\mathbf{1}(G_{\mathscr{J}})$ as twists, and we must instead convert the restrictions into Dirichlet polynomials. The event $G_{\mathscr{J}}$ is very rare compared to a typical deviation of the size $\sqrt{\log\log T}$ from Selberg's Central Limit Theorem, thus the necessary Dirichlet polynomials required to detect a large deviation have very high degrees. This means we must truncate the partial sums at relatively small primes  to have a permissible twist where mean value theorems apply. In \cite{AB24}, this truncation occurred at $T^{1/\alpha^m}$ for a power $m$ much larger than $2$, which leads to a worse constant. However, because we tilt rather than relying solely on indicator functions uniformly close to $1$, we can use shorter polynomials, and so we are able to control primes all the way up to  $T_{\mathscr J}=T^{\frac{1}{\mathscr{R}(\alpha^2+1)}}$.
The following proposition, proved in the next section, gives bounds on $\P(G_{\mathscr{J}})$ and the necessary expectations.


\begin{prop}\label{prop:goodprobs}
For the event $G_{\mathscr{J}}$ in \eqref{eqn:G}, we have
\begin{equation}\label{notwisteqn}
\P(G_{\mathscr{J}})\asymp_\alpha(e^{-\gamma}\alpha^2\log\alpha)^{-\alpha^2} \frac{e^{-\frac{V^{*2}}{\log\log T}}}{\sqrt{\log\log T}},
\end{equation}
\begin{equation}\label{2twisteqn}
\E\left[\left|\zeta M\right|^2 \mathbf{1}\left(G_{\mathscr{J}}\right)\right]\asymp_\alpha   (e^{-\gamma}\alpha^2\log\alpha)^{-\alpha^2} \frac{e^{-\frac{V^{*2}}{\log\log T}}}{\sqrt{\log\log T}} \text{ and }
\end{equation}
\begin{equation}\label{4twisteqn}
\E\left[\left|\zeta M\right|^4\mathbf{1}\left(G_{\mathscr{J}}\right)\right]\ll_\alpha   (e^{-\gamma}\alpha^2\log\alpha)^{-\alpha^2} \frac{e^{-\frac{V^{*2}}{\log\log T}}}{\sqrt{\log\log T}}.
\end{equation}
\end{prop}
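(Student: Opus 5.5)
The plan is to replace the indicator $\mathbf{1}(G_{\mathscr{J}})$ by a smooth polynomial proxy built from Dirichlet polynomials, compute the resulting twisted moments via Proposition \ref{Prop:Twistmoll} and the Steinhaus model of Appendix \ref{Sec:Steinprob}, and convert back to indicators by a tilting argument. Concretely, since $G_{\mathscr{J}}$ factors as $G_0$ intersected with barrier events on the increments $Y_j=S_j-S_{j-1}$, $1\le j\le\mathscr{J}$, I would write
\[
\mathbf{1}(G_{\mathscr{J}})=\sum_{\mathfrak n}\mathbf{1}\bigl(|M_0|^2\in I_{\mathfrak n}\bigr)\prod_{j\ge1}\mathbf{1}\bigl(S_j\in[L_j,U_j]\bigr),
\]
with $I_{\mathfrak n}$ the intervals of length $\Delta_0^{-1}$ partitioning $[L_0,U_0]$. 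On each interval $\mathscr{P}_j$ I introduce the exponential tilt $e^{\lambda_j Y_j}$, where $\lambda_j\approx 2\kappa[z]$ is chosen so that under the tilted law the mean of $Y_j$ is $\kappa[z](n_j-n_{j-1})$. Under this tilt the barrier constraint becomes a typical event, and the cost of the tilt is the Gaussian factor. I then approximate $e^{\lambda_jY_j}$ by its Taylor polynomial of degree $k_{j,\mathbf u}$ (the degrees of \eqref{keqn}). These degrees are short enough that $\prod_j$ of the polynomials, times $M$, still has length $T^{1/100}$; this is where $T^*=T^{1/(\mathscr R(\alpha^2+1))}$ enters. The indicators of the barriers are handled by approximating indicators of $Y_j$ (relative to the tilted mean) by polynomials, with errors controlled by high moments as in \cite{AB24}. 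For $\mathscr{P}_0$ the mollifier $M_0$ is untruncated, and $|M_0|^{2}$ itself acts as the tilt: indeed $\E[|M_0|^{-2\alpha'}\cdots]$-type Euler products over $p\in\mathscr P_0$ produce the arithmetic factor.

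With these polynomial twists $Q(t)$, each expectation reduces to a sum of the shape $\E[|\zeta M|^{2k}Q]$ for $k=0,1,2$. Proposition \ref{Prop:Twistmoll} gives asymptotics for these: the twisted second and fourth moments of $\zeta$ against $|M|^2Q$ and $|M|^4Q$, in terms of the Steinhaus expectation of $Q$ multiplied by factors $\log T/\log T_{\mathscr{J}}$ (powers $1$ and $4$ respectively). On the event, mollification by $M$ cancels the primes up to $T^*$, so these factors are $\ll_\alpha 1$ by the convention \eqref{eqn: convention}. Thus all three quantities are comparable, up to $\ll_\alpha$, to the Steinhaus probability $\P(G_{\mathscr J})$ in the random model. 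For the fourth moment only an upper bound is needed, which lets me majorize $\mathbf 1(G_{\mathscr J})$ by a nonnegative polynomial. The second moment needs two-sided bounds, and for these I sandwich the indicator.

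The Steinhaus computation, using Appendix \ref{Sec:Steinprob}, gives the final estimate. The increments on $\mathscr{P}_j$, $j\ge1$, are nearly independent Gaussians with variance $\tfrac12(n_j-n_{j-1})$. A ballot/barrier estimate shows that a walk confined to windows of width $10^4\log_{j+2}T$ around the line costs only $\asymp_\alpha 1$ relative to the endpoint density, which gives $e^{-(V^*-z)^2/(n_{\mathscr J}-n_0)}/\sqrt{\log\log T}$. Summing over $z=S_0$ against the tilted law of the small primes produces the Euler product $\prod_{p\in\mathscr P_0}\E[e^{2\kappa^*\re(p^{-1/2-\ii\theta})}]\,e^{-\kappa^{*2}\cdot\text{var}}$. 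With $\kappa^*\approx\alpha$, this product is asymptotic to $(e^{-\gamma}\alpha^2\log\alpha)^{-\alpha^2}$, from primes $p\lesssim\alpha^2$ where the Gaussian approximation fails, by Mertens-type estimates. Finally, $n_{\mathscr J}=\log\log T-\log(\mathscr R(\alpha^2+1))$, so replacing $\log\log T_{\mathscr J}$ by $\log\log T$ in the exponent costs only $e^{O(\alpha^2)}$, which is absorbed.

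The main obstacle is the polynomial approximation of the tilted indicators with degrees $k_{j,\mathbf u}$ small enough for the mean value theorems yet large enough that truncation errors are negligible relative to the tiny probability $\P(G_{\mathscr J})$. For $j=1$ near $n_0$ the window is narrow relative to the needed tilt, and this is the delicate case. I would first try to bound the truncation error of $e^{\lambda Y}$ by Chernoff-type moment bounds at degree about $10\lambda^2(n_j-n_{j-1})$. I would then check that the total length $\prod_j p^{k_j}\le T^{1/100}$ is achievable with the double-exponential spacing of the $n_j$.
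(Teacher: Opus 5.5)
Your architecture (tilt, polynomial proxies for the indicators, Proposition~\ref{Prop:Twistmoll}, then a Steinhaus/Gaussian computation) matches the paper's. However, the accounting of the $\alpha$-dependence, which is the real content of the proposition, is wrong in two places that happen to cancel.

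First, the normalized tilted Euler product over $\mathscr P_0$ is of order $a_\alpha e^{\gamma\alpha^2}$. The paper obtains $\E[e^{-2\kappa^* R_0}]\asymp_\alpha a_\alpha e^{\gamma\alpha^2}e^{-\kappa^{*2}n_0}$, and this has size $(\log\alpha)^{-\alpha^2}e^{O(\alpha^2)}$, not $(\alpha^2\log\alpha)^{-\alpha^2}$. The primes $p\le\alpha^2$ contribute at most $\exp\bigl(2\alpha\sum_{p\le\alpha^2}p^{-1/2}-\alpha^2\sum_{p\le\alpha^2}p^{-1}\bigr)=\exp(-\alpha^2\log\log\alpha+O(\alpha^2))$, and the larger primes only $e^{O(\alpha^2)}$.

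Second, the missing factor $\alpha^{-2\alpha^2}$ comes from the truncation at $T^*$. Since $n_{\mathscr J}=\log\log T-\log(\mathscr R(\alpha^2+1))$ and $V^*\sim\alpha\log\log T$, one has $V^{*2}/n_{\mathscr J}-V^{*2}/\log\log T=\alpha^2\log(\mathscr R(\alpha^2+1))+o(1)$. So replacing $\log\log T_{\mathscr J}$ by $\log\log T$ costs $(\mathscr R(\alpha^2+1))^{-\alpha^2}$, not $e^{O(\alpha^2)}$. The paper ends with $a_\alpha e^{\gamma\alpha^2}e^{-V^{*2}/\log\log T_{\mathscr J}}/\sqrt{\log\log T_{\mathscr J}}$ and gets the $\alpha^2$ inside the constant exactly from substituting $T_{\mathscr J}$. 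This is why the Remark at the end of Section~\ref{Sec: Mainproof} singles out $T^*$ as the bottleneck. If you fixed only the Euler product and kept your claim about the truncation, you would get $(\log\alpha)^{-\alpha^2}e^{O(\alpha^2)}$, which even beats the conjectured $a_\alpha g_\alpha$ of \eqref{eqn: factors}.

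The second gap is the step you defer to \emph{errors controlled by high moments as in} \cite{AB24}. Polynomials of admissible degree mimic the indicator only for $|Y_j-u_j|\le X_j$ with $X_j\asymp\alpha(n_j-n_{j-1})$; beyond that, $\mathscr D^\pm_j$ is only bounded by $|2y/X_j|^{100X_j\Delta_j^{3a}}$ (Lemma~\ref{Lem: Dbound}). Hence $(\mathscr D^-_j)^2$ is not a minorant of the indicator. Likewise $(\mathscr D^+_j)^2$ is a useful majorant only once the far region is shown negligible against the tilted target, so your sandwich does not exist until this is done. The Cauchy--Schwarz control of \cite{AB24,ABR20} decouples the tilt from the far values and loses exactly the factor you need, as the paper points out.

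The paper's route is as follows.
\begin{enumerate}
\item It partitions into boxes $\mathbf u\in\mathfrak I^\pm$. You need this anyway: the barrier constrains $S_j$, not $Y_j$, and only products of one-variable polynomials in the $Y_j$ factor over the independent $\mathscr P_j$.
\item It tilts by $Y_j^{2k_{j,\mathbf u}}$, which is constant up to $\asymp_\alpha$ on each box (Lemma~\ref{Lem:MAGIC}).
\item It inserts $\mathbf 1(|Y_j-u_j|\le X_j)$, expands by inclusion--exclusion, and replaces $\mathbf 1(|Y_r-u_r|>X_r)$ by the Markov polynomial $(Y_r-u_r)^{2X_r}/X_r^{2X_r}$.
\item After reducing to even moments with nonnegative coefficients, it dominates Steinhaus moments by Gaussian ones (Lemma~\ref{Cummom}). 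This shows the far region is $O(2^{-X_j})$ times the box contribution (Proposition~\ref{Prop:SteinhausprobsJ}).
\end{enumerate}

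Two smaller points. On $\mathscr P_0$ the tilt must favour small $|M_0|$, i.e.\ negative powers of $|M_0|$, which are not Dirichlet polynomials, so $|M_0|^2$ cannot act as the tilt. The paper tilts by $Y_0^{2k_{0,\mathbf u}}$ and converts to $|\mathscr M_0|^{-2\kappa^*}$ only inside the Steinhaus model, via $-\log|\mathscr M_0|\in[\mathscr Y_0-10,\mathscr Y_0+10]$. Also, a Taylor polynomial of $e^{\lambda Y_j}$ of degree $k_{j,\mathbf u}$ is too short. Since $\lambda u_j\approx 2k_{j,\mathbf u}$, and the degree-$N$ Taylor polynomial of $e^x$ at $x=2N$ falls short by roughly $(2/e)^N$, you need a larger constant multiple of the degree, as your last paragraph already allows.
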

Substituting the expectations \eqref{notwisteqn} and \eqref{2twisteqn} into Equation \eqref{eqn:newdelta}, observing that $L_{\mathscr{J}}={V^*}-O(1)$, and recalling the convention \eqref{eqn: convention}, we see that
\begin{equation}
\delta\le  \exp\left(2V-2{V^*}+O\left(\alpha\right)\right).
\end{equation}
Hence, it suffices to choose ${V^*}=V+O(\alpha)$ as in \eqref{eqn: V tilde} to ensure that $\delta\le e^{-1}$.
This completes the proof of Theorem \ref{LBLD}.

\section{Proof of Proposition \ref{prop:goodprobs}}\label{Sec: Mainproof}
The proof of Proposition \ref{prop:goodprobs} is based on approximating indicator functions for the good event $G_{\mathscr{J}}$ by polynomials evaluated at real parts of Dirichlet polynomials, and on twisted moments of these quantities.
We review these tools in Sections \ref{sect: approx G} and \ref{sect: twisted real}.

\subsection{Approximation of the events $G_{\mathscr{J}}$}
\label{sect: approx G}

We partition the intervals $[L_j,U_j]$ into  subintervals of length $\Delta_j^{-1}$, where
\begin{equation}\label{eqn:delj}
\Delta_j= \left\lfloor \log^{10}_{j+1}T\right\rfloor.
\end{equation}
We will see from Equation \eqref{seplen} that $\Delta_j$ cannot be taken much larger. The key observation is that $\Delta_j$ does not grow with $\alpha$, enabling us to keep the Dirichlet polynomials for the indicator functions short.

For a lower bound, we need to split the region within the barriers into the union of intervals contained in $[L_j,U_j]$ for each $j$, whilst for an upper bound, we need to ensure the union covers $[L_j,U_j]$. We thus define the sets $\mathfrak{I}^+$ for the upper bound, and $\mathfrak{I}^-$ for the lower bound. We take $\mathfrak{I}^{\pm}$ to be the set of tuples $\left(u_0,...,u_{\mathscr{J}}\right)$ such that: \begin{itemize}
\item $u_j\Delta_j\in \Z  $ for each $j$,\\
\item $u_0\in [L_0\mp \Delta_0^{-1},U_0\pm \Delta_0^{-1}]$  and \\
\item$-\frac{1}{2}\log u_0+\sum^j_{m=1}u_m\in [L_j\mp \Delta_j^{-1},U_j\pm \Delta_j^{-1}]\ \forall 1\le j\le \mathscr{J}.$
\end{itemize}
We then need to understand the increments
\begin{equation}
\label{eqn: Yj}
Y_j=S_j-S_{j-1}, \qquad 0\leq j\leq \mathscr{J}.
\end{equation}
 We partition on the values of the quantities \begin{equation}
Z_j=\begin{cases} |M_0|^2& j=0,\\
Y_j & 1\le j \le \mathscr{J}.\end{cases}
\end{equation}

We relate the values $Z_j$  to the partial sums $S_j$ as follows:
\begin{align}\label{reverse}
\cup_{\mathbf{u}\in \mathfrak{I}^-} \{Z_j\in [u_j, u_j+\Delta_j^{-1}] \forall j\} \subset\left\{ |M_0|^2\in [L_0, U_0+\Delta_0^{-1}],\ S_{j}\in [L_j, U_j+\Delta_j^{-1}],\ 1\le j\le \mathscr{J} \right\},
\end{align}
and
\begin{align}\label{includes}
\cup_{\mathbf{u}\in \mathfrak{I}^+} \{Z_j\in [u_j, u_j+\Delta_j^{-1}] \forall j\} \supset\{ |M_0|^2\in [L_0+\Delta_0^{-1}, U_0],\ S_{j}\in [L_j+\Delta_j^{-1}, U_j], \ 1\le j\le \mathscr{J} \}.
\end{align}
The events on the left-hand side are essentially disjoint for different tuples $\mathbf{u}\in \mathfrak{I}^{\pm}.$ We hence observe that \begin{align}
		\nonumber\sum_{\mathbf{u}\in \mathfrak{I}^-} &\E\left[\left|\zeta M\right|^2 \mathbf{1}(Z_j\in [u_j, u_j+\Delta_j^{-1}]\forall 0\le j\le \mathscr{J}) \right]\\\le &\E\left[\left|\zeta M\right|^2 \mathbf{1}(G_{\mathscr{J}})\right]\\\nonumber\le \sum_{\mathbf{u}\in \mathfrak{I}^+} &\E\left[\left|\zeta M\right|^2 \mathbf{1}(Z_j\in [u_j, u_j+\Delta_j^{-1}]\forall 0\le j\le \mathscr{J})\right].
	\end{align} 
In order to bound each expectation it is necessary to convert the indicator functions into Dirichlet polynomials. First, we must tilt to reduce the necessary length of the Dirichlet polynomials to give a twist which can be handled by Proposition \ref{Prop:Twistmoll}.

A standard approach in large deviations is to introduce an exponential  tilt for values to become typical in the tilted measure.
To improve the constant, we use a polynomial  tilt, which is better suited to applying the Mean Value Theorem for Dirichlet polynomials.
The tilt effectively reduces the degree required for the Dirichlet polynomials mimicking the indicator functions.
	
We begin by calculating the lower bound for each $u\in \mathfrak{I}^-.$ Instead of calculating\\ $\E\left[\left|\zeta M\right|^2 \mathbf{1}(Z_j\in [u_j, u_j+\Delta_j^{-1}]\forall 0\le j\le \mathscr{J})\right],$ we multiply by the partial sums over the intervals $\mathscr{P}_j,$ each raised to a power depending on the values of:
\begin{equation}\label{eqn:z_j}
	z_{-1}=0,\quad z_j=-\frac{1}{2}\log u_0+\sum^{j}_{k=1} u_k, \quad 0\le j\le \mathscr{J}.
\end{equation}
We consider the  tilted expectations $\E\left[\left|\zeta M\right|^2 \prod^{\mathscr{J}}_{j=0} Y_j^{2k_{j,\mathbf{u}}} \mathbf{1}(Z_j\in [u_j+\Delta_j^{-a}, u_j+\Delta_j^{-1}])\right],$
	where
\begin{equation}\label{keqn}
	k_{j,\mathbf{u}}=\lfloor \kappa^*(z_j-z_{j-1})\rfloor,\ \text{ for $0\le j\le \mathscr{J}.$}
\end{equation}
Thus, $k_{j,\mathbf{u}}=\lfloor \kappa^*u_j\rfloor$ for $1\leq j\leq \mathcal J$, and $k_{0,\mathbf{u}}=\lfloor -\frac{\kappa^*}{2}\log u_0\rfloor$.
If we have values close to a given target interval $\mathfrak{u}\in \mathfrak{I}^{\pm}$ then the following key lemma shows we can specify the value of the tilt up to $\asymp_\alpha $ factors.
 \begin{lem}\label{Lem:MAGIC}
	Let $\mathbf{u}\in \mathfrak{I}^\pm$ and $c_{-1},c_0,...,c_{\mathscr{J}}$ be real numbers satisfying \begin{itemize}
		\item $c_{-1}=0$
		\item  $\left|c_j-z_j\right|\le 100 $ for each $0\le j\le \mathscr{J}.$

	\end{itemize}
	Then $\prod^{\mathscr{J}}_{j=0} \left(c_j-c_{j-1}\right)^{2k_{j,\mathbf{u}}}\asymp_\alpha \mathfrak{c}_{\mathbf{u}},$ where

	\begin{equation}
		\mathfrak{c}_{\mathbf{u}}=\prod^{\mathscr{J}}_{j=0} \left(z_j-z_{j-1}\right)^{2k_{j,\mathbf{u}}}.
	\end{equation}
\end{lem}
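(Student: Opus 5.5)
We compare the two products factor by factor. Write $c_j-c_{j-1}=(z_j-z_{j-1})+\epsilon_j$ with $\epsilon_j=(c_j-z_j)-(c_{j-1}-z_{j-1})$, so that $|\epsilon_j|\le 200$ (and $|\epsilon_0|\le 100$ since $c_{-1}=z_{-1}=0$). Then
\begin{equation*}
\frac{\prod_{j=0}^{\mathscr{J}}(c_j-c_{j-1})^{2k_{j,\mathbf{u}}}}{\mathfrak{c}_{\mathbf{u}}}=\prod_{j=0}^{\mathscr{J}}\left(1+\frac{\epsilon_j}{z_j-z_{j-1}}\right)^{2k_{j,\mathbf{u}}}.
\end{equation*}
Each factor is $\exp\bigl(2k_{j,\mathbf{u}}\log(1+\epsilon_j/(z_j-z_{j-1}))\bigr)$, and since $k_{j,\mathbf{u}}\le\kappa^*(z_j-z_{j-1})$ we will use $\log(1+x)=x+O(x^2)$ to write the exponent as $2\kappa^*\epsilon_j+O\bigl(\kappa^*\epsilon_j^2/(z_j-z_{j-1})\bigr)$, up to the rounding error from the floor, which contributes $O(|\epsilon_j|/(z_j-z_{j-1}))$.

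The key structural point is that the linear terms telescope:
\begin{equation*}
\sum_{j=0}^{\mathscr{J}}2\kappa^*\epsilon_j=2\kappa^*(c_{\mathscr{J}}-z_{\mathscr{J}}),
\end{equation*}
which is $O(\kappa^*)$. Since $\kappa^*=V^*/n_{\mathscr{J}}\asymp\alpha$, this is $e^{O(\alpha)}$, which is exactly the size permitted by the convention \eqref{eqn: convention} for $\asymp_\alpha$. The floor error is handled the same way once the increments are large.

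It then remains to show that the quadratic remainders sum to $O(1)$, or at most $O(\alpha)$, uniformly in $T$. For this we need lower bounds on the increments $z_j-z_{j-1}$ for $\mathbf{u}\in\mathfrak{I}^\pm$. From the barrier \eqref{eqn:barrier} and the definition of $\mathfrak{I}^\pm$,
\begin{equation*}
z_j-z_{j-1}=\kappa[z](n_j-n_{j-1})+O(\log_{j+1}T),
\end{equation*}
and by \eqref{eqn:logtime}, $n_j-n_{j-1}=\mathbf{s}(\log_{j+1}T-\log_{j+2}T)\asymp\log_{j+1}T$. With $\mathbf{s}=10^6$ much larger than the barrier width $10^4$, this gives $z_j-z_{j-1}\gg\log_{j+1}T\ge 1000$ for $1\le j\le\mathscr{J}$. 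For $j=0$, $z_0=-\tfrac12\log u_0\approx\kappa^*n_0$ is large. The sum of the quadratic errors is then
\begin{equation*}
\sum_j\frac{\kappa^*\epsilon_j^2}{z_j-z_{j-1}}\ll\alpha\sum_{j}\frac{1}{\log_{j+1}T},
\end{equation*}
and this series is dominated by its last terms. Because $\log_{j+1}T$ grows super-exponentially as $j$ decreases and $\log_{\mathscr{J}+1}T\ge e^{1000}$, the sum is $O(1)$. This also justifies the Taylor expansion, since $|\epsilon_j|/(z_j-z_{j-1})\le 1/2$.

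The main obstacle is the bookkeeping for the increment lower bound. One must check that the barrier widths $10^4\log_{j+2}T$, together with the $\Delta_j^{-1}$ slack in $\mathfrak{I}^\pm$, never swamp the drift $\kappa[z](n_j-n_{j-1})$, including at $j=1$, where the starting point $z$ may deviate. The plan here is to use that $\kappa[z]\asymp\alpha$ uniformly over the allowed range $z_0\in[-\tfrac12\log U_0,-\tfrac12\log L_0]$, since $n_0^{2/3}\ll n_{\mathscr{J}}$. Combining the telescoped linear term with the bounded quadratic errors gives the ratio in $[e^{-O(\alpha)},e^{O(\alpha)}]$, which is the claim $\prod_j(c_j-c_{j-1})^{2k_{j,\mathbf{u}}}\asymp_\alpha\mathfrak{c}_{\mathbf{u}}$.
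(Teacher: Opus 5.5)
Your proposal is correct and follows essentially the same route as the paper: you Taylor-expand each factor, use $k_{j,\mathbf{u}}=\kappa^*(z_j-z_{j-1})+O(1)$ so that the linear terms telescope to $2\kappa^*(c_{\mathscr{J}}-z_{\mathscr{J}})=O(\alpha)$ (the paper writes this as a summation by parts), and bound the quadratic and rounding errors by a sum of $(z_j-z_{j-1})^{-1}$ that is $O(1)$. Your explicit lower bound $z_j-z_{j-1}\gg\log_{j+1}T$, derived from the barrier geometry and the choice $\mathbf{s}=10^6\gg 10^4$, fills in a step the paper leaves implicit.
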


\begin{proof}
	By Taylor expansion of the moments, we see \begin{equation}
		\prod^{\mathscr{J}}_{j=0}\left(\frac{c_j-c_{j-1}}{z_j-z_{j-1}}\right)^{2k_{j,\mathbf{u}}}=\exp\left(\sum^{\mathscr{J}}_{j=0}2k_{j,\mathbf{u}}\left(\frac{c_j-z_j-(c_{j-1}-z_{j-1})}{z_j-z_{j-1}}+O\left(\frac{1}{\left(z_j-z_{j-1}\right)^2}\right)\right)\right).
	\end{equation}
	
	By Equation \eqref{keqn}, we see the total error term is
	\begin{equation}
		\exp\left(\sum^{\mathscr{J}}_{j=1}\frac{2k_{j,\mathbf{u}}}{(z_j-z_{j-1})^2}\right)=\exp(O(1)), 
	\end{equation}
	and hence we may bound the above quotient as
	
	\begin{equation}
		\exp\left(O(1)+\sum^{\mathscr{J}-1}_{j=0}(c_j-z_{j})\left(\frac{2k_{j,\mathbf{u}}}{z_j-z_{j-1}}-\frac{2k_{j+1,\mathbf{u}}}{z_{j+1}-z_{j}}\right)+\frac{2k_{\mathscr{J},\mathbf{u}}\left(c_{\mathscr{J}}-z_{\mathscr{J}}\right)}{z_{\mathscr{J}}-z_{\mathscr{J}-1}}\right).
	\end{equation}
	By Equation \eqref{keqn} we have $k_{j,\mathbf{u}}=\kappa^*(z_j-z_{j-1})+O(1)$ for $0\le j\le \mathscr{J}$ and so this is 
	\begin{equation}
		\exp\left(O(1)+2\kappa^*\left(c_{\mathscr{J}}-z_{\mathscr{J}}\right)+\sum^{\mathscr{J}-1}_{j=0}O\left(\frac{1}{z_j-z_{j-1}}+\frac{1}{z_{j+1}-z_{j}}\right)\right)=\exp(O(1+\kappa^*)).
	\end{equation}
	This is acceptable as an $\asymp_\alpha $ factor, since $\kappa^*\to \alpha$ as $T\to\infty.$  
\end{proof}
The indicator function for each interval can be approximated by a polynomial  due to the following lemma from \cite{AB24}.
\begin{lem}[Lemma 2 in \cite{AB24}]\label{Lem: indicators}
For all $X>100$, $\Delta$ sufficiently large and $a>2$, there exist polynomials  $\mathscr{D}^+$ and $\mathscr{D}^-$ of degree smaller than $100X\Delta^{3a}$ with $l^{\text{th}}$ coefficient bounded by $\frac{(2\pi)^l}{l!}\Delta^{2a(l+2)}$ such that for $|x|\le X$, we have

\begin{equation}
\mathbf{1}\left(x\in \left[0,\Delta^{-1}\right]\right)(1-e^{-\Delta^{a-2}})\le |\mathscr{D}^+(x)|^2\le \mathbf{1}\left(x\in \left[-\Delta^{-a},\Delta^{-1}+\Delta^{-a}\right]\right)+e^{-\Delta^{a-2}}.
\end{equation}  
and \begin{equation}\label{negbound}
\mathbf{1}\left(x\in \left[\Delta^{-a},\Delta^{-1}-\Delta^{-a}\right]\right)(1-e^{-\Delta^{a-2}})\le |\mathscr{D}^-(x)|^2\le \mathbf{1}\left(x\in \left[0,\Delta^{-1}\right]\right)+e^{-\Delta^{a-2}}.
\end{equation}
\end{lem}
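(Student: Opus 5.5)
The plan is to build $\mathscr{D}^\pm$ as truncated Taylor polynomials of a smooth, rapidly decaying approximation to the indicator. First I write the indicator as a square of a nonnegative smooth function. Then I replace that function by an entire function whose Fourier transform has compact support, expand it as a power series, and truncate.

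\textbf{Step 1: smoothing.} Fix a smooth bump $\phi\ge 0$ supported in $[-1,1]$ with $\int\phi=1$, and set $\phi_\Delta(x)=\Delta^{a}\phi(\Delta^{a}x)$. For $\mathscr{D}^+$, consider $g^+=\mathbf{1}_{[-\Delta^{-a}/2,\,\Delta^{-1}+\Delta^{-a}/2]}*\phi_{2\Delta^{a}}$. It equals $1$ on $[0,\Delta^{-1}]$, vanishes outside $[-\Delta^{-a},\Delta^{-1}+\Delta^{-a}]$, and takes values in $[0,1]$. For $\mathscr{D}^-$, I use the analogous construction shrunk inward, so that it equals $1$ on $[\Delta^{-a},\Delta^{-1}-\Delta^{-a}]$ and is supported in $[0,\Delta^{-1}]$. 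I then aim for a polynomial $\mathscr{D}^\pm$ with $|\mathscr{D}^\pm-\sqrt{g^\pm}|$ tiny on $|x|\le X$. To avoid the square root, I would instead construct $h^\pm$ smooth with $(h^\pm)^2$ satisfying these bump properties; for instance $h^\pm$ can be a bump convolved as above, taking $h=1$ on the core and $0$ outside the enlarged interval.

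\textbf{Step 2: band-limiting with Gaussian damping.} Following the standard approach of \cite{AB24}, I replace $h$ by $\tilde h(x)=\int_{|\xi|\le \Delta^{2a}}\hat h(\xi)e^{2\pi \ii \xi x}\rd\xi$. Since $h$ is a convolution with a bump at scale $\Delta^{-a}$, its Fourier transform decays like $\exp(-c(|\xi|\Delta^{-a})^{1/2})$ if $\phi$ is Gevrey-class, such as $\exp(-1/(1-x^2))$. The tail beyond $\Delta^{2a}$ is therefore at most $\exp(-c\Delta^{a/2})$, which is much smaller than $e^{-\Delta^{a-2}}$ once the exponents are tuned. If this decay is not strong enough, I would instead use a Gaussian mollifier at scale $\Delta^{-a}$ and accept Gaussian tails in $x$. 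Those tails give $h^2\le\mathbf{1}(\cdot)+e^{-\Delta^{a-2}}$ after widening by $\Delta^{-a}$, provided the Gaussian width is about $\Delta^{-a}\Delta^{-(a-2)/2}$. This bookkeeping is where the exponents $a-2$ and $3a$ come from, and I expect it to be the main obstacle.

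\textbf{Step 3: Taylor truncation.} I expand $e^{2\pi \ii\xi x}=\sum_l (2\pi \ii\xi x)^l/l!$. This gives $\tilde h(x)=\sum_l c_l x^l$ with
\[
|c_l|\le \frac{(2\pi)^l}{l!}\Delta^{2al}\|\hat h\|_{L^1}\le \frac{(2\pi)^l}{l!}\Delta^{2a(l+2)},
\]
because $\|\hat h\|_{L^1}\ll \Delta^{a}\cdot\Delta^{2a}$ by crude bounds. I truncate at degree $N=100X\Delta^{3a}$. For $|x|\le X$, the tail is bounded by $\sum_{l>N}(2\pi X\Delta^{2a})^l\Delta^{4a}/l!$. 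Since $N\ge e^2\cdot 2\pi X\Delta^{2a}$, Stirling's formula bounds this by $\Delta^{4a}e^{-N}\le e^{-\Delta^{a}}$. Setting $\mathscr{D}^\pm$ equal to this truncation gives $|\mathscr{D}^\pm-h^\pm|\le \epsilon:=e^{-\Delta^{a-1}}$ on $|x|\le X$.

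\textbf{Step 4: squaring.} From $|h|\le 1$ I get $\bigl||\mathscr{D}|^2-h^2\bigr|\le 3\epsilon$. On the core, where $h=1$, this gives $|\mathscr{D}|^2\ge 1-3\epsilon\ge 1-e^{-\Delta^{a-2}}$. Outside the support, $|\mathscr{D}|^2\le 3\epsilon\le e^{-\Delta^{a-2}}$. In between, $|\mathscr{D}|^2\le 1+3\epsilon$, which is still $\le\mathbf{1}+e^{-\Delta^{a-2}}$. These are exactly the two-sided inequalities claimed, for both $\mathscr{D}^+$ and $\mathscr{D}^-$, provided $\Delta$ is large.
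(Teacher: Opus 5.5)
The paper gives no proof of this lemma; it is quoted from \cite{AB24}, so your argument has to stand on its own. Your overall construction is the right kind of argument: smooth the indicator at scale $\Delta^{-a}$, band-limit to $|\xi|\le\Delta^{2a}$, Taylor-expand $e^{2\pi\ii\xi x}$, truncate at degree about $100X\Delta^{3a}$, then square. The Taylor-tail estimate in Step 3 and the squaring argument in Step 4 are correct.

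\textbf{Gap in Step 2.} The problem is Step 2 as you primarily state it. For $\phi(x)=\exp(-1/(1-x^2))$ one only has $|\hat\phi(\xi)|\ll\exp(-c|\xi|^{1/2})$. Your bound for the band-limiting error $\int_{|\xi|>\Delta^{2a}}|\hat h(\xi)|\,\rd\xi$ is therefore of order $\exp(-c\Delta^{a/2})$, which is below $e^{-\Delta^{a-2}}$ only when $a<4$. So the construction fails for every $a>4$, including $a=5$, the value used in this paper. Tuning constants cannot rescue it: the exponent $1/2$ is fixed by the bump, the band $\Delta^{2a}$ is forced by the coefficient bound, and the smoothing scale $\Delta^{-a}$ is forced by the widening in the statement. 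For the same reason, the claim $|\mathscr{D}^\pm-h^\pm|\le e^{-\Delta^{a-1}}$ at the end of Step 3 is unjustified. It records only the Taylor tail, while the band-limiting error $\exp(-c\Delta^{a/2})$ exceeds $e^{-\Delta^{a-1}}$ for every $a>2$.

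\textbf{First repair: a better bump.} Keep a compactly supported bump, but choose one of Gevrey order $s<a/(a-2)$, for example $\exp(-(1-x^2)^{-\beta})$ with $\beta>(a-2)/2$. Its Fourier transform decays like $\exp(-c|\xi|^{1/s})$, so the band-limiting error becomes $\exp(-c\Delta^{a/s})=o(e^{-\Delta^{a-2}})$. Step 4 then goes through verbatim, with $\epsilon$ equal to this error plus the Taylor tail.

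\textbf{Second repair: your Gaussian fallback.} This works cleanly; I checked the numbers. Take $h^\pm=\mathbf{1}_{I^\pm}*g_w$, where $I^+=[-\frac12\Delta^{-a},\Delta^{-1}+\frac12\Delta^{-a}]$, $I^-=[\frac12\Delta^{-a},\Delta^{-1}-\frac12\Delta^{-a}]$, and $g_w$ is the centred Gaussian density of standard deviation $w=\frac14\Delta^{-(3a-2)/2}$.
\begin{itemize}
\item Gaussian tails at distance $\frac12\Delta^{-a}$ give $h^+\ge 1-2e^{-2\Delta^{a-2}}$ on $[0,\Delta^{-1}]$ and $h^-\ge 1-2e^{-2\Delta^{a-2}}$ on $[\Delta^{-a},\Delta^{-1}-\Delta^{-a}]$.
\item Likewise $h^+\le 2e^{-2\Delta^{a-2}}$ outside $[-\Delta^{-a},\Delta^{-1}+\Delta^{-a}]$, and $h^-\le 2e^{-2\Delta^{a-2}}$ outside $[0,\Delta^{-1}]$.
\item Since $\widehat{h^\pm}(\xi)=\widehat{\mathbf{1}_{I^\pm}}(\xi)e^{-2\pi^2w^2\xi^2}$ and $w\Delta^{2a}=\frac14\Delta^{(a+2)/2}$, the band-limiting error is $\exp(-c\Delta^{a+2})$.
\item Since $\int_{|\xi|\le\Delta^{2a}}|\widehat{h^\pm}|\le 4\Delta^{2a-1}\le\Delta^{4a}$, the stated coefficient bound follows.
\end{itemize}
In Step 4 you must then work with these approximate values of $h^\pm$ rather than exact $0$ and $1$. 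The extra losses of size $O(e^{-2\Delta^{a-2}})$ are absorbed into $e^{-\Delta^{a-2}}$ for large $\Delta$, uniformly in $X>100$.
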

The choice of $a$, provided it is fixed, will not have any qualitative effect in the proof; at most it will effect the constant $C$ in Equation \eqref{ca}. We will take $a=5$ throughout.  

In \cite{AB24}, an accurate upper bound on the twisted moments was not required to give a lower bound on the probability and so the bound from the Cauchy-Schwarz inequality sufficed. This is no longer the case when tilting to recover a constant of the shape in Equation \eqref{ca}. Thus, it is necessary to control the contribution of points far away from the interval $[L_j,U_j]$, which translates into a bound on the growth of $\mathscr{D}^{\pm}(y)$ when $|y|$ is large. The following simple lemma gives a  sufficient bound on $\mathscr{D}^\pm(y)$ when $|y|\ge X$.
\begin{lem}\label{Lem: Dbound}
If $X$ and $\Delta$ are sufficiently large and $|y|\ge X,$ then \begin{equation}\left|\mathscr{D}^\pm(y)\right|\le\left|2y/X\right|^{100 X \Delta^{3a}}.\end{equation}
\end{lem}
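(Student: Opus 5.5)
We will bound the polynomial $\mathscr{D}^\pm$ directly from its degree and the size of its coefficients, both supplied by Lemma \ref{Lem: indicators}. Write $N$ for the degree, so $N\le 100X\Delta^{3a}$, and write $\mathscr{D}^\pm(y)=\sum_{l=0}^{N} d_l y^l$ with $|d_l|\le \frac{(2\pi)^l}{l!}\Delta^{2a(l+2)}$. For $|y|\ge X\ge 1$ we have $|y|^l\le |y|^N$ for every $l\le N$, so
\begin{equation*}
|\mathscr{D}^\pm(y)|\le |y|^{N}\sum_{l\ge 0}\frac{(2\pi)^l}{l!}\Delta^{2a(l+2)}=|y|^N\Delta^{4a}\exp\left(2\pi\Delta^{2a}\right).
\end{equation*}

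The remaining task is to show that the constant factor $\Delta^{4a}e^{2\pi\Delta^{2a}}$ is absorbed into $(2/X)^{N}$ once the exponent is rewritten. If the degree were exactly $N_0:=100X\Delta^{3a}$, we would use $|y|^{N_0}=X^{N_0}|y/X|^{N_0}$ and then need the prefactor to fit inside $2^{N_0}$. That holds because $\log$ of the prefactor is $O(\Delta^{2a})$, while $N_0\log 2\ge 69X\Delta^{3a}$, and this dominates once $\Delta$ is large.

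The stray factor $X^{N}$ is a genuine problem, so the crude bound $|y|^l\le|y|^N$ is too lossy and we must keep the $1/l!$. We therefore bound term by term:
\begin{equation*}
|d_l y^l|\le \Delta^{4a}\frac{(2\pi\Delta^{2a}|y|)^l}{l!}.
\end{equation*}
Set $N_0=100X\Delta^{3a}$ and compare each term with $|2y/X|^{N_0}$, using $l!\ge (l/e)^l$. For small $l$ the term is at most $\Delta^{4a}e^{2\pi\Delta^{2a}}|y|^{l}$. Since $|y|\ge X$, we have $|y|^l\le |y|^{N_0}$, and we need $|y|^{N_0}\cdot(\text{const})\le (2|y|/X)^{N_0}$. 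That requires $X^{N_0}\cdot\text{const}\le 2^{N_0}$, which fails for large $X$. So this route also breaks.

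The step I expect to be the main obstacle is exactly this clash: the target $|2y/X|^{N_0}$ equals only $2^{N_0}$ at $|y|=X$. Yet $\mathscr{D}^\pm$ at $|y|=X$ is only controlled by the coefficient bounds, giving size about $e^{2\pi\Delta^{2a}X}$, and this is at most $2^{N_0}$ because $2\pi\Delta^{2a}X\ll 69X\Delta^{3a}$. So the point $|y|=X$ itself is fine. The real plan is then:
\begin{enumerate}[(i)]
\item At $|y|=X$, show $|\mathscr{D}^\pm(y)|\le \Delta^{4a}e^{2\pi\Delta^{2a}X}\le 2^{N_0}$.
\item For larger $|y|$, use that a polynomial of degree $N\le N_0$ satisfies $|P(y)|\le |y/X|^{N}\max_{|w|=X}|P(w)|$ for $|y|\ge X$, by the maximum principle applied to $P(w)/w^N$ on the exterior of the disc of radius $X$. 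Extend $P$ to complex arguments, where the same coefficient bound gives $\max_{|w|=X}|P(w)|\le\Delta^{4a}e^{2\pi\Delta^{2a}X}$.
\end{enumerate}
Combining the two, and noting $|y/X|\ge1$ so that $N\le N_0$ may be replaced by $N_0$, gives
\begin{equation*}
|\mathscr{D}^\pm(y)|\le 2^{N_0}|y/X|^{N_0}=|2y/X|^{100X\Delta^{3a}}.
\end{equation*}
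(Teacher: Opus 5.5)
Your final argument, steps (i) and (ii), is correct, but it reaches the key intermediate bound $|\mathscr{D}^\pm(y)|\le |y/X|^{100X\Delta^{3a}}\,\Delta^{4a}e^{2\pi\Delta^{2a}X}$ by a different route from the paper.

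The paper keeps the termwise estimate you abandoned, but normalises by $X$ before comparing powers. For each $l$ up to the degree it writes $|y|^l=X^l|y/X|^l\le X^l|y/X|^{100X\Delta^{3a}}$, using $|y|\ge X$. The factor $X^l$ then stays inside the sum and is absorbed into $\sum_l(2\pi\Delta^{2a}X)^l/l!\le e^{2\pi\Delta^{2a}X}$. Finally, exactly as in your step (i), $\Delta^{4a}e^{2\pi\Delta^{2a}X}\le 2^{100X\Delta^{3a}}$ once $\Delta$ is large.

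Your first two attempts failed only because you bounded $|y|^l$ by $|y|^{N}$ instead of bounding $|y/X|^l$ by $|y/X|^{N}$. That one-line repair is the paper's whole proof.

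Your maximum-modulus step is the Bernstein--Walsh inequality, applied to $\mathscr{D}^\pm(w)/w^N$ on $\{|w|>X\}$. It is valid: since $\deg\mathscr{D}^\pm\le N$, the function is bounded at infinity, so the maximum principle on the exterior domain applies.

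What that route buys in general is that it needs only a bound for $\max_{|w|=X}|\mathscr{D}^\pm(w)|$, not explicit coefficient bounds. Here, however, you estimate that maximum through the coefficients anyway, so nothing is gained. The paper's version is more elementary and avoids complexifying the polynomial.
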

\begin{proof} Substituting the bounds on the coefficients  of $\mathscr{D}^\pm$ from Lemma \ref{Lem: indicators} shows that  \begin{align}
\left|\mathscr{D}^\pm(y)\right|&\le \sum^{100 X \Delta^{3a}}_{l=0} \frac{(2\pi)^l}{l!}\Delta^{2a(l+2)}|y|^l
= \sum^{100 X \Delta^{3a}}_{l=0}\nonumber \frac{(2\pi)^l}{l!}\Delta^{2a(l+2)}|X|^l\left|\frac{y}{X}\right|^l.\end{align}
Since $|y|\ge X$ we can  bound the last term  to write the above as
\begin{equation}\le\left|\frac{y}{X}\right|^{100 X \Delta^{3a}}\Delta^{4a} \sum^{100 X \Delta^{3a}}_{l=0} \frac{(2\pi)^l}{l!}\Delta^{2al}X^l\le \left|\frac{y}{X}\right|^{100 X \Delta^{3a}}\Delta^{4a} \exp(2\pi \Delta^{2a} X),\end{equation} which follows from truncating the exponential Taylor series.
For $X$ and $\Delta$ sufficiently large, this is $\le \left|2y/X\right|^{100 X \Delta^{3a}}.$
\end{proof}

\subsection{Twisted moments for real partial sums}
\label{sect: twisted real}
In this section, we follow \cite{C25}.
Motivated by the idea of showing that the behaviour of the partial sums over different intervals $\mathscr{P}_j$ should decorrelate, we define $\alpha$-separable twists. The number of prime factors required in each interval grows with $\alpha$, and so we must truncate at smaller primes to keep the total length of the Dirichlet polynomial short.
The following definition will apply to the product of the polynomials $\mathscr{D}^\pm$ evaluated at the real partial sums $Y_j$.
\begin{df}
\label{df: Q}
We say $Q$ is $\alpha$-separable if it is of the form
\begin{equation} Q(t)=\sum_{\substack{p|mn\Rightarrow \log\log p\le n_{\mathscr{J}}\\ \Omega_0(mn)\le \log^{\frac{1}{100}}T\\ \Omega_j(mn)\le  \alpha^2(n_j-n_{j-1})^{10^4}\forall 1\le j\le \mathscr{J}}}
\frac{a(n,m)}{n^{\frac{1}{2}+it}m^{\frac{1}{2}-it}},\end{equation}
where $a(n,m)=\overline{a(m,n)}$ for all $n,m$.  Here, $\Omega_l(n)$ denotes the number of prime factors of $n$ (with multiplicity) that lie in $\mathscr{P}_l$.
\end{df}
The condition $a(n,m)=\overline{a(m,n)}$ ensures that $Q(t)$ is real whenever $t$ is real. Observe moreover that an $\alpha$-separable polynomial can have length at most:
\begin{equation}\label{seplen}e^{e^{n_0}\log^{\frac{1}{100}}T}\cdot \prod^{\mathscr{J}}_{j=1}T_j^{\alpha^2(n_j-n_{j-1})^{10^4}}=T^{\sum^{\mathscr{J}}_{j=1}\frac{\alpha^2(n_j-n_{j-1})^{10^4}\log_{\mathscr{J}+1}^sT}{\mathscr{R}(\alpha^2+1)\log_{j+1}^sT}+o(1)}\le T^{\frac{1}{1000}},\end{equation}
for a suitably large absolute constant $\mathscr{R},$ which means we can apply the Mean Value Theorem for Dirichlet Polynomials to evaluate $\E\left[Q^2\right].$
In \cite{C25}, the distribution of quadratic twists of elliptic curves are considered; there may be much cancellation between terms with the same square-free parts, and so in order to preserve these in relating twisted mollified moments, these terms are grouped together. Inspired by this, we group together terms in $Q^2$, and using the relation $(rm)^{it}(rn)^{-it}=m^{it}n^{-it},$ we may write

\begin{equation}\label{Q2}
Q^2=\sum_{n,m} \frac{b(n,m)}{m^{\frac{1}{2}+it}n^{\frac{1}{2}-it}},
\end{equation}
for some coefficients $b(n,m)$ such that $b(n,m)=\overline{b(m,n)}$ and $b(n,m)=0$ if $(n,m)>1$. \\
Indeed, if we expand out $Q^2$, we get
\begin{equation}\label{unsimpleeqn}
Q^2=\sum_{u,v} \frac{P(u,v)}{u^{\frac{1}{2}+it}v^{\frac{1}{2}-it}},\qquad P(u,v)=\sum_{\substack{n_1n_2=u\\ m_1m_2=v}} a(n_1,m_1)a(n_2,m_2).
\end{equation}
If we take out any common factors of $u$ and $v$ in Equation \eqref{unsimpleeqn}, we then have
\begin{equation}\label{eqn:bcoef}
b(n,m)=\sum_{r} \frac{P(rn,rm)}{r} \qquad \text{if $(n,m)=1$, and $0$ otherwise.}
\end{equation}

Note that $|b(n,m)|\le b(1,1)$ for all $n,m$, thus the integral  $\frac{1}{T}\int^{T}_0 Q(t)^2\rd t$ is dominated by the diagonal term $b(1,1)$.
Indeed, when $(u,v)=1$ we may set $
\mathcal{C}(u,v)=\sum_r a(ru,rv),
$  
so that
\\
\begin{align}
|b(n,m)|=\Big|\sum_{\substack{(u_1,v_1)=(u_2,v_2)=1\\ nv_1v_2=mu_1u_2}} \mathcal{C}(u_1,v_1)\mathcal{C}(u_2,v_2)\Big|
\le \sum_{\substack{(u_1,v_1)=(u_2,v_2)=1\\ nv_1v_2=mu_1u_2}} \frac{1}{2}\left(\left|\mathcal{C}(u_1,v_1)\right|^2+\left|\mathcal{C}(u_2,v_2)\right|^2\right).
\end{align}

Since the sum is supported on coprime integers, each pair $(u,v)$ appears at most once as $(u_1,v_1)$ and at most once as $(u_2,v_2)$, and so the above sum is at most $b(1,1).$
With these considerations, the twisted moment formulae proved in Appendix \ref{Sec:Twistmoll} take the following form.
\begin{prop}
\label{Prop:Twistmoll}
Let $Q$ be $\alpha$-separable and $M$ be as in Equation \eqref{eqn: mollifier}. Then with the coefficients $b(c,g)$ as defined in Equation \eqref{eqn:bcoef}, we have
\begin{align}
\label{2}
&\int^T_0\Big|\zeta\left(\frac{1}{2}+it\right) M(t)\Big|^2 Q(t)^2\rd t\asymp T\frac{\log T}{\log T_{\mathscr{J}}} b(1,1),\\
\label{4}
&\int^T_0\Big|\zeta\left(\frac{1}{2}+it\right)  M(t)\Big|^{4} Q(t)^2\rd t\ll T\left(\frac{\log T}{\log T_{\mathscr{J}}}\right)^4 b(1,1),
  \end{align}
where the implicit constants are independent of $\alpha.$
\end{prop}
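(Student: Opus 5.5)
The plan is to reduce both estimates to twisted second and fourth moments of $\zeta$ against a \emph{Dirichlet polynomial} twist, and then use mollification to collapse the off-diagonal structure onto the diagonal coefficient $b(1,1)$.

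\textbf{Step 1: expand the twist.} Write $Q(t)^2$ in the grouped form \eqref{Q2} with coprime support $b(n,m)$. Then $\int|\zeta M|^2Q^2$ becomes a linear combination
\[
\sum_{(n,m)=1} b(n,m)\int_0^T \bigl|\zeta(\tfrac12+it)\bigr|^2 |M(t)|^2 (m/n)^{it}(mn)^{-1/2}\,\rd t .
\]
Expand $|M|^2$ as well, so that $M\overline{M}Q^2$ is a Dirichlet polynomial of length at most $T^{1/500}$ by \eqref{seplen} and the length bound on $M$.

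\textbf{Step 2: twisted second moment.} Apply the Balasubramanian--Conrey--Heath-Brown twisted second moment formula. The main term is
\[
T\sum \frac{c(h)\overline{c(k)}\,(h,k)}{hk}\Bigl(\log\frac{T(h,k)^2}{2\pi hk}+2\gamma\Bigr),
\]
and the error is acceptable because the twist is short. Since $M$ is a product over the disjoint prime ranges $\mathscr P_j$ of (truncated) Möbius sums, the arithmetic factor is multiplicative over the intervals. For each $\mathscr P_j$, the mollifier contribution combined with the twist coefficients gives a local factor. Summing the Möbius sums yields, for the untruncated $M_0$, the standard cancellation factor $\prod_{p\le T_{\mathscr J}}(1-1/p)\asymp(\log T_{\mathscr J})^{-1}$; the logarithm term supplies the factor $\log T$. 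Truncation in the $M_j$ ($j\ge1$) costs only $1+o(1)$, because $\Omega_j$ is large compared with the typical $n_j-n_{j-1}$; this is controlled by Rankin's trick.

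The twisted coefficients $b(n,m)$ with $(n,m)\ne(1,1)$ enter multiplied by local Euler factors in which each prime of $nm$ contributes a factor of size $O(1/\sqrt p)$ relative to the diagonal. Using $|b(n,m)|\le b(1,1)$ together with the multiplicative structure over the $\mathscr P_j$, the off-diagonal total is at most $b(1,1)\prod_j(1+O(\sum_{p\in\mathscr P_j}p^{-1}\cdot\text{small}))$. I still have to check that this gives both the upper bound and, because of positivity of the main-term quadratic form, the lower bound $\gg b(1,1)$. For the lower bound, I would instead write the main term as $\E$ over the Steinhaus model of $|\prod_p(1-X_p/\sqrt p)^{-1}M|^2Q^2$ times $\log$-weights. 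This is a nonnegative expectation, and restricting to its diagonal $b(1,1)$ gives the matching lower bound.

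\textbf{Step 3: fourth moment.} For \eqref{4}, invoke the twisted fourth moment of Hughes--Young (extended to coprime-grouped twists of length $T^{1/1000}$). Its main term is a polynomial in logarithms of degree $4$, with arithmetic sums of the same Euler-product shape. Bounding each log by $\log T$ and each mollifier Euler factor by $(\log T_{\mathscr J})^{-1}$ gives the $(\log T/\log T_{\mathscr J})^4$ size, and again $|b(n,m)|\le b(1,1)$ controls the off-diagonal terms.

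\textbf{Main obstacle.} The main obstacle is showing that the off-diagonal $b(n,m)$ sum does not cost a factor growing with $\alpha$ or $\mathscr J$, which is needed for the implied constants to be absolute. For this I would exploit the $\alpha$-separable support: it bounds how many primes in each $\mathscr P_j$ can appear, so the local correction factors multiply to $\exp(O(\sum_j e^{-n_{j-1}/2}\cdot\text{poly}))=O(1)$.
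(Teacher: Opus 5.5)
There is a genuine gap in Step 2, exactly at the point you call the main obstacle. The off-diagonal coefficients $b(c,g)$ with $(c,g)\neq(1,1)$ are not damped by a factor $O(p^{-1/2})$ per prime. They are annihilated by the mollifier. Take coprime $w,z$ supported on a set of primes $\mathscr P$. At a prime $p^e\,\|\,w$, the local factor of $\sum_{f,h}\mu(f)\mu(h)[w,z]/[wf,zh]$ is $\sum_{a,b\in\{0,1\}}(-1)^{a+b}p^{-a}=0$, so
\[
\sum_{\substack{f,h\\ p\mid fh\Rightarrow p\in\mathscr P}}\frac{\mu(f)\mu(h)[w,z]}{[wf,zh]}=\mathbf{1}(w=z=1)\prod_{p\in\mathscr P}\Bigl(1-\frac1p\Bigr).
\]
This is the identity \eqref{Seleqn} with $r=0$ that the paper uses.
\begin{itemize}
\item With the untruncated $M_0$, every off-diagonal term drops out of the Balasubramanian--Conrey--Heath-Brown main term identically.
\item For $j\ge1$, Rankin's trick gives $C^{(1)}_j(w,z)=\prod_{p\in\mathscr P_j}(1-1/p)\{\mathbf{1}(w=z=1)+O(e^{-100(n_j-n_{j-1})})\}$, uniformly over the support of $Q$. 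This uniformity is why the mollifier's cap $5\alpha^2(n_j-n_{j-1})^{10^5}$ is taken far above the twist's cap on $\Omega_j$.
\end{itemize}
Only after this does $|b(c,g)|\le b(1,1)$ suffice, because the exponentially small error beats the count $\sum_{(c_j,g_j)=1}1/[c_j,g_j]\ll e^{2(n_j-n_{j-1})}$.

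Without this exact cancellation your argument cannot close. The bound $|b(c,g)|\le b(1,1)$ is termwise, over a support of length up to $T^{1/1000}$, so any nonzero per-prime weight gets summed over an enormous set. Already the unmollified weight $1/[c,g]$ gives $b(1,1)\prod_{p\le T_{\mathscr J}}\frac{1+1/p}{1-1/p}\asymp b(1,1)(\log T_{\mathscr J})^2$.

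Moreover $\mathscr P_0$ contains $2,3,5,\dots$, where no decay factor is small; your proposed $e^{-n_{j-1}/2}$ is not small at $j=0$ either. So nothing of this type can make the off-diagonal smaller than the diagonal, which is what the lower bound in \eqref{2} requires.

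Your fallback for the lower bound, restricting a nonnegative Steinhaus expectation to its diagonal, is not a valid inequality, because the off-diagonal part can be negative. The version that does work is that $M$ exactly cancels the random Euler factors at $p\le T_{\mathscr J}$. The expectation then factors by independence and $\E[\mathcal Q^2]=b(1,1)$, but that is the same cancellation identity.

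You also need the $r=1$ case of the identity. It shows that the $\log\bigl([m,n]/(m,n)\bigr)$ part of the main term (the paper's $I_2$) contributes only $O(b(1,1))$, which termwise bounds on the logarithm cannot give. The fourth moment likewise requires carrying the mollifier cancellation through the Hughes--Young main term, as in Lemma 9 of \cite{ABR20}. Bounding logarithms by $\log T$ and off-diagonal coefficients by $b(1,1)$ is not enough there.
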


\subsection{Estimating restricted expectations}
We focus on the proof of \eqref{2twisteqn}, then explain the modifications needed to prove Equations \eqref{notwisteqn} and \eqref{4twisteqn} at the end of the section. We use the inclusions  \eqref{reverse}-\eqref{includes} and the polynomials $\mathscr{D}^+$ and $\mathscr{D}^-$ from Lemma \ref{Lem: indicators}  to approximate the indicator event $G_{\mathscr{J}}$. For the interval $\mathscr{P}_0,$ because the primes are sufficiently small, we can approximate the indicator function to the target interval for any value of $\left|M_0\right|^2$ up to the pointwise bound, and so we take  \begin{equation}X_0=\prod_{p\in \mathscr{P}_0} \left(1-\frac{1}{p^{\frac{1}{2}}}\right)^{-2}=(\log T)^{o(1)}.\end{equation}

For $1\leq j\leq\mathscr{J}$, we approximate the indicator function to the interval of length $\Delta_j^{-1}$ up to magnitudes of size
\begin{equation}\label{chooseXeqn}
X_j=\lceil100\left(\alpha(n_j-n_{j-1})+10^4\log_{j+1}T\right)\rceil.
\end{equation}
Writing $\mathscr{D}^\pm_j$ for the polynomials with this choice of polynomials for each $j$, we now see that, for any choice of partitions $\mathbf{u}\in \mathfrak{I}^{\pm},$ we have:
\begin{align}
\nonumber\E&\left[ \left|\zeta M\right|^2 Y_0^{2k_{0,\mathbf{u}}}\left(\mathscr{D}_0^-(\left|M_0\right|^2-u_0)^2+\epsilon_0\right)\prod_{1\le j\le \mathscr{J}}Y_j^{2k_{j,\mathbf{u}}} \left(\mathscr{D}^-_j(Y_j-u_j)^2+\epsilon_j\right)\mathbf{1}\left(|Y_j-u_j|\le X_j\right)\right]\\\label{eqn:boundindic}&\le\E\left[\left|\zeta M\right|^2 \prod^{\mathscr{J}}_{j=0}Y_j^{2k_{j,\mathbf{u}}}  \mathbf{1}\left(Y_j\in \left[u_j,u_j+\Delta_j^{-1}\right]\right)\right]\le\\ \E&\left[ \left|\zeta M\right|^2 Y_0^{2k_{0,\mathbf{u}}}\left(\mathscr{D}^+_0(\left|M_0\right|^2-u_0)^2+\epsilon_0\right)\prod_{1\le j\le \mathscr{J}}Y_j^{2k_{j,\mathbf{u}}} \left(\mathscr{D}^+_j(Y_j-u_j)^2+\epsilon_j\right)\mathbf{1}\left(|Y_j-u_j|\le X_j\right)\right].\nonumber
\end{align}

For convenience, we have set $\epsilon_j=O(e^{-\Delta_j})$
to be the error term from truncating the approximation of the indicator functions, which may change from line to line.
We now prove:
\begin{equation}
\label{eqn: LB to prove}
\begin{aligned}
& \sum_{\mathbf{u}\in \mathfrak{I}^-} \frac{1}{\mathfrak{c}_{\mathbf{u}}}\E\Big[\left|\zeta M\right|^2 Y_0^{2k_{0,\mathbf{u}}}\left(\left(\mathscr{D}^-_0(|M_0|^2-u_0)\right)^2+\epsilon_0\right)\prod_{1\le j\le \mathscr{J}}Y_j^{2k_{j,\mathbf{u}}} \left(\mathscr{D}^-_j(Y_j-u_j)^2+\epsilon_j\right)\mathbf{1}\left(|Y_j-u_j|\le X_j\right)\Big]\\&\nonumber
\gg_\alpha a_\alpha \frac{e^{-\frac{V^{*2}}{\log\log T}}}{\sqrt{\log\log T}}.
\end{aligned}
\end{equation}
A matching upper bound with $\mathscr{D}^+$ on $\mathfrak{I}^+$ follows the same way, and we omit the details. \\

For the first interval comprised of the primes $p\in \mathscr{P}_0,$ we have taken $X_0$ sufficiently large that we have a pointwise bound $|M_0|^2\le X_0$, and so the Dirichlet polynomial $\mathscr{D}_0^-(|M_0|^2-u_0)$ is always a good approximation for the indicator function $\mathbf{1}\left(|M_0|^2\in [u_0, u_0+\Delta_0^{-1}]\right).$ For the intervals $\mathscr{P}_j$ with $j\ge 1$, we were unable to take $X_j$ sufficiently large for this pointwise bound, and maintain a short enough Dirichlet polynomial to use Proposition  \ref{Prop:Twistmoll}. However, with the aid of the tilt, we will show that this interval still dominates the expectation. We can use the Inclusion-Exclusion Principle  to rewrite the first line in Equation \eqref{eqn:boundindic} as:
\begin{equation}
\sum_{R\subset\{1,...,\mathscr{J}\}} (-1)^{|R|} \E\left[ \left|\zeta M\right|^2 \prod_{0\le j\le \mathscr{J}} \left(\mathscr{D}^-_j(Y_j-u_j)^2+\epsilon_j\right)\prod_{r\in R}\mathbf{1}\left(|Y_r-u_r|> X_r\right)\right].
\end{equation}
The dominant term comes from $R=\emptyset$, when none of the partial sums  $Y_r$ are too far from the target intervals. When $R\neq \emptyset$, we use Markov's inequality on each interval $\mathscr{P}_j$ with $j\in R$ to express the above as
\begin{align}\nonumber
&\E\Big[ \left|\zeta M\right|^2 \prod_{0\le j\le \mathscr{J}} \Big(\mathscr{D}^-_j(Y_j-u_j)^2+\epsilon_j\Big)\Big]+\\\sum_{\substack{R\subset\{1,...,\mathscr{J}\}\\ R\ne \emptyset}}O&\Big(   \E\Big[ \left|\zeta M\right|^2\prod_{0\le j\le \mathscr{J}} \left(\mathscr{D}^-_j(Y_j-u_j)^2+\epsilon_j\right)\prod_{r\in R}\frac{|Y_r-u_r|^{2X_r}}{X_r^{2X_r}}\Big]\Big) .
\label{eqn:Inc-Exc}
\end{align}
We now estimate the above expectations by using Proposition \ref{Prop:Twistmoll}.

We need to show the twist, after we fully expand the product, is $\alpha$-separable in the sense of Definition \ref{df: Q}.
The summand with the most prime factors is from $R=\{1,...,\mathscr{J}\}$. The twist then amounts to
$$
\left(S_1-S_0\right)^{2k_{0,\mathbf{u}}}\mathscr{D}^-_0\left(Y_0-u_0\right)^2\prod^{\mathscr{J}}_{j=1}Y_j^{2k_{j,\mathbf{u}}}\mathscr{D}^-_j\left(Y_j-u_j\right)^2(Y_j-u_j)^{2k_{j}}.$$
The terms in the twist have the number of prime factors in $\mathscr{P}_0$ given by\\ $\Omega_0(mn)\le 200X_0 \Delta_0^{3a}+2k_{0,\mathbf{u}}\le \log^{\frac{1}{100}}T,$ while for $1\le j\le \mathscr{J},$ $\Omega_j(mn)$ is bounded above by
$$
400X_j\Delta_j^{3a}+2k_{j,\mathbf{u}}\le 600(100\alpha(n_j-n_{j-1}+10^5\log_{j+1}T)\log^{30a}_{j+1}T+4\alpha^2(n_j-n_{j-1}))\le \alpha^2(n_j-n_{j-1})^{10^4},
$$
for $T$ sufficiently large. Hence $Q$ is $\alpha$-separable, which allows us to take the twisted moments.

The Mean Value Theorem for Dirichlet Polynomials and its twisted mollified analogue in Proposition \ref{Prop:Twistmoll} effectively replace $p^{-it}$ by independent Steinhaus random variables in \eqref{def: Sj}. More precisely, if $\theta=(\theta_p, p \text{ primes})$ are independent and identically distributed uniformly on $\R/2\pi\Z$, then the analogue of $Y_j$ in \eqref{eqn: Yj}, $j\ge 0
$, in the Steinhaus model is
\begin{equation}\label{eqn: MathscrYj}
\mathscr{Y}_j=\sum_{p\in \mathscr{P}_j} \frac{\cos \theta_p}{p^{\frac{1}{2}}}+\frac{\cos 2\theta_p}{2p}.
\end{equation}
	The variance of this increment is exactly 
\begin{equation}
	\label{eqn:Sigmaj}
	\sigma_j^2=\sum_{ p\in \mathscr{P}_j } \frac{1}{2p}+\frac{1}{8p^2}.
\end{equation}

To get an accurate bound for the first interval of primes $\mathscr{P}_0$, we require the full Euler product that includes the larger power of $p$:
\begin{equation}
\mathscr{M}_{0}=\prod_{p\in \mathscr{P}_0}1-\frac{e^{i\theta_p}}{p^{1/2}}.
\end{equation}
For a given $\alpha$-separable polynomial $Q(t)$ in Definition \ref{df: Q}, we have the corresponding multinomial in the Steinhaus model
\begin{equation}\label{random}
\mathcal{Q}(\theta)=\sum_{m,n}\frac{a(n,m)}{n^{\frac{1}{2}}m^{\frac{1}{2}}}e^{i\theta_n}e^{-i\theta_m},
\end{equation}
where $\theta_n$ is uniquely defined  completely additively from the values of $\theta_p$ and the prime factorisation of $n$.
The expectation of the second moment is then given by the diagonal term in Equation \eqref{eqn:bcoef}: \begin{equation}\label{eqn:SteinExp}
\E\left[\mathcal Q(\theta)^2\right]=b(1,1).
\end{equation}

In light of the twisted mollifier formulae in Proposition \ref{Prop:Twistmoll}, we see that the twisted mollified moments are also dominated by this diagonal term. Comparing the diagonal terms $b(1,1)$ in Equation \eqref{2} for the second twisted mollified moment and in Equation \eqref{eqn:SteinExp} for the Steinhaus model, we see that Equation \eqref{eqn:Inc-Exc} is
\begin{align}
& \asymp \frac{\log T}{\log T_{\mathscr{J}}}\left( \E\left[\mathscr{Y}_0^{2k_{0,\mathbf{u}}} \left(\mathscr{D}_0^-(\left|\mathscr{M}_0\right|^2-u_0)^2+\epsilon_0\right)\prod^{\mathscr{J}}_{j=1} \mathscr{Y}_j^{2k_{j,\mathbf{u}}} \left(\mathscr{D}_j^-(\mathscr{Y}_j-u_j)^2+\epsilon_j\right) \right]+\right.\\\sum_{\substack{R\subset\{1,...,\mathscr{J}\}\\
R\ne \emptyset}}&O\left.\left(\E\left[\mathscr{Y}_0^{2k_{0,\mathbf{u}}} \left(\mathscr{D}_0^-(\left|\mathscr{M}_0\right|^2-u_0)^2+\epsilon_0\right)\prod^{\mathscr{J}}_{j=1} \mathscr{Y}_j^{2k_{j,\mathbf{u}}} \left(\mathscr{D}_j^-(\mathscr{Y}_j-u_j)^2+\epsilon_j\right)\prod_{r\in R}\frac{(\mathscr{Y}_r-u_r)^{2X_r}}{X_r^{2X_r}}\right]\right)\right).
\nonumber
\end{align}
Using the independence of the random variables associated to primes in the disjoint intervals, we may express the sum as:
\begin{align}
\label{eqn: steinhaus expect}
\asymp &\frac{\log T}{\log T_{\mathscr{J}}} \E\left[\mathscr{Y}_0^{2k_{0,\mathbf{u}}} \left(\mathscr{D}_0^-(\left|\mathscr{M}_0\right|^2-u_0)^2+\epsilon_0\right)\right]\times\\&\prod^{\mathscr{J}}_{j=1} \left(\E\left[\mathscr{Y}_j^{2k_{j,\mathbf{u}}} \left(\mathscr{D}_j^-(\mathscr{Y}_j-u_j)^2+\epsilon_j\right) \right]+O\left(\E\left[\mathscr{Y}_j^{2k_{j,\mathbf{u}}} \left(\mathscr{D}_j^-(\mathscr{Y}_j-u_j)^2+\epsilon_j\right)\frac{(\mathscr{Y}_j-u_j)^{2X_j}}{X_j^{2X_j}}\right]\right)\right).\nonumber
\end{align}

It now remains to bound each of these expectations.
We first calculate the first term involving the small primes.
\begin{prop}\label{Prop:Steinhausprobsinit}
The expectation associated with the first interval  of primes, $\mathscr{P}_0,$ is bounded below as follows:
\begin{equation}\label{firstprob}\E\left[\mathscr{Y}_0^{2k_{0,\mathbf{u}}} \left(\mathscr{D}_0^-(\left|\mathscr{M}_0\right|^2-u_0)^2+\epsilon_0\right)\right]\ge\E\left[\mathscr{Y}_{0}^{2k_{0,\mathbf{u}}}\mathbf{1}(\left|\mathscr{M}_0\right|^2 \in [u_0, u_0+\Delta_0^{-1}])\right]\exp\left(O\left(\Delta_0^{-\frac{1}{4}}\right)\right).\end{equation}
\end{prop}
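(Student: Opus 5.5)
The plan is to apply Lemma \ref{Lem: indicators} pointwise with $\Delta=\Delta_0$, $X=X_0$ and $a=5$. The deterministic bound $|\mathscr{M}_0|^2\le\prod_{p\in\mathscr{P}_0}(1+p^{-1/2})^2\le X_0$ holds for every $\theta$. Together with $u_0\in[L_0-\Delta_0^{-1},U_0+\Delta_0^{-1}]\subset[0,X_0]$ for large $T$, this gives $|x|\le X_0$ with $x=|\mathscr{M}_0|^2-u_0$, so the hypothesis of the lemma is met everywhere on the probability space. No tail truncation is needed for $\mathscr{P}_0$, in contrast with the intervals $j\ge 1$.

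The first step is the pointwise inequality. Using $\epsilon_0\ge0$ and the lower bound in \eqref{negbound}, for every $\theta$
\[
\mathscr{D}_0^-(x)^2+\epsilon_0\ \ge\ \bigl(1-e^{-\Delta_0^{a-2}}\bigr)\mathbf{1}\bigl(x\in[\Delta_0^{-a},\Delta_0^{-1}-\Delta_0^{-a}]\bigr).
\]
Strictly, $\mathscr{D}_0^-$ is built so that its lower bound is the indicator of the shrunken interval. The gap between this and $[u_0,u_0+\Delta_0^{-1}]$ is absorbed either by the convention already used in \eqref{eqn:boundindic}, where the target intervals are $[u_j+\Delta_j^{-a},u_j+\Delta_j^{-1}]$, or by shifting the polynomial slightly. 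Concretely, I would use the $\mathscr{D}^+$-type lower bound from Lemma \ref{Lem: indicators}, $|\mathscr{D}^+(x)|^2\ge \mathbf{1}(x\in[0,\Delta^{-1}])(1-e^{-\Delta^{a-2}})$, which gives exactly the indicator of $[u_0,u_0+\Delta_0^{-1}]$. Alternatively, I would show that the shaved boundary strips of width $\Delta_0^{-a}$ carry a relative proportion $O(\Delta_0^{-1/4})$ of the weighted mass.

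The second step multiplies by $\mathscr{Y}_0^{2k_{0,\mathbf{u}}}\ge0$, which preserves the inequality, and takes expectations. Since $1-e^{-\Delta_0^{a-2}}=\exp(O(e^{-\Delta_0^{3}}))=\exp(O(\Delta_0^{-1/4}))$, this yields \eqref{firstprob}.

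The main obstacle is the boundary issue, if one insists on $\mathscr{D}_0^-$. In that case I would bound the $\mathscr{Y}_0^{2k_{0,\mathbf{u}}}$-weighted probability that $|\mathscr{M}_0|^2$ lies in a strip of width $\Delta_0^{-a}$ at the end of $[u_0,u_0+\Delta_0^{-1}]$, relative to the whole interval. The approach would be a smoothness (anti-concentration) estimate for the tilted law of $\log|\mathscr{M}_0|^2$, from the characteristic function of the sum of independent $\log|1-e^{i\theta_p}p^{-1/2}|$. The relative mass of a strip is then $\ll \Delta_0^{1-a}\le\Delta_0^{-1/4}$, since $\Delta_0=e^{100\alpha n_0}$ is far larger than any tilt-induced density distortion of size $e^{O(k_{0,\mathbf u})}$.
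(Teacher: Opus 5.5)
There is a genuine gap. The argument you actually carry out replaces $\mathscr{D}_0^-$ by $\mathscr{D}^+$, so it proves an inequality about a different polynomial. The swap is not harmless. $\mathscr{D}^-$ sits on the lower-bound side of \eqref{eqn:boundindic} precisely because $|\mathscr{D}^-(x)|^2\le\mathbf{1}(x\in[0,\Delta^{-1}])+e^{-\Delta^{a-2}}$, and $|\mathscr{D}^+|^2$ violates this just outside $[0,\Delta^{-1}]$. No ``slight shift'' of the polynomial repairs this. By continuity, a polynomial whose square is $\ge 1-e^{-\Delta^{a-2}}$ on all of $[0,\Delta^{-1}]$ stays close to $1$ slightly outside it. Replacing $[u_0,u_0+\Delta_0^{-1}]$ by the shrunken interval on the right-hand side simply changes the statement. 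So once the trivial pointwise step with \eqref{negbound} is done, the whole content of the proposition is your ``alternative''. You must show that the two strips of width $\Delta_0^{-a}$ at the ends of $[u_0,u_0+\Delta_0^{-1}]$ carry only a proportion $O(\Delta_0^{-1/4})$ of the $\mathscr{Y}_0^{2k_{0,\mathbf{u}}}$-weighted mass. This is also the comparison the paper makes, using a small-interval lower bound imported from \cite{AB24} together with \eqref{totalmoment}.

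Your sketch of that comparison fails as written. You claim $\Delta_0$ dominates a distortion of size $e^{O(k_{0,\mathbf{u}})}$. But $k_{0,\mathbf{u}}=\lfloor\kappa^* z_0\rfloor\approx\kappa^{*2}n_0\approx\alpha^2 n_0$, while $\log\Delta_0^{a-1}=400\alpha n_0$. So for large $\alpha$, which is exactly the regime behind \eqref{ca}, such a loss is not absorbed. Also, under the $\mathscr{Y}_0^{2k_{0,\mathbf{u}}}$-weighted law the $\theta_p$ are not independent, so characteristic-function arguments do not apply to it directly.

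A working route has two steps.
\begin{enumerate}
\item Remove the weight pointwise. By \eqref{eqn:mlog}, on $\{|\mathscr{M}_0|^2\in[u_0,u_0+\Delta_0^{-1}]\}$ one has $\mathscr{Y}_0=z_0+O(1)$. So the weight there equals $z_0^{2k_{0,\mathbf{u}}}e^{O(\kappa^*)}$, a loss of only $e^{O(\alpha)}$.
\item Prove $\P(\text{strip})\le \Delta_0^{1-a}e^{o(n_0)}\,\P(|\mathscr{M}_0|^2\in[u_0,u_0+\Delta_0^{-1}])$.
\end{enumerate}
In the second step, $\log|\mathscr{M}_0|^2\approx -2\kappa^* n_0$ is a large deviation. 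A global density bound for the strip, set against a large-deviation lower bound for the interval, leaves a factor of order $e^{\kappa^{*2}n_0}$, which $\Delta_0^{1-a}=e^{-400\alpha n_0}$ cannot absorb once $\alpha$ is large. What is missing is a local estimate under an exponential tilt $|\mathscr{M}_0|^{-2\lambda}$, with $\lambda\approx\kappa^*$ chosen so that $\log u_0$ is typical. That tilt keeps the $\theta_p$ independent, so characteristic-function decay gives both a bounded tilted density (for the strip) and a local lower bound (for the interval). The untilting factor varies only by $1+O(\lambda\Delta_0^{-1}/u_0)$ across $[u_0,u_0+\Delta_0^{-1}]$.
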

\begin{proof}[Proof of Proposition \ref{Prop:Steinhausprobsinit}]
We follow the proof in Section 3.2 of \cite{AB24}, where many of the upper bounds can be turned into asymptotics.
Because $\left|\mathscr{M}_0\right|^2\le X_0$ pointwise, the polynomials in Lemma \ref{Lem: indicators} approximate the indicator functions for the entire range.
Equation \eqref{negbound} now yields that:
\begin{align}\nonumber&
\E\left[\mathscr{Y}_0^{2k_{0,\mathbf{u}}} \left(\mathscr{D}_0^-(\left|\mathscr{M}_0\right|^2-u_0)^2+\epsilon_0\right)\right]\ge \left(1-e^{-\Delta_0^{a-2}}\right)\times\\&\E\left[\mathscr{Y}_0^{2k_{0,\mathbf{u}}} \left(\mathbf{1}(\left|\mathscr{M}_0\right|^2\in [u_0+\Delta_0^{-a},u_0+\Delta_0^{-1}-\Delta_0^{-a}])+\epsilon_0\right)\right],
\end{align}
upon readjusting the error term $ \epsilon_0=O\left(e^{-\Delta_0}\right)$.
We observe that, following from Equation (67) in \cite{AB24}, and recalling from Equation \eqref{d0} that $\Delta_0=e^{100\alpha n_0}$, one has  \begin{equation}
\P(\left|\mathscr{M}_0\right|^2\in [u_0, u_0+\Delta_0^{-\frac{3}{2}}])\ge \Delta_0^{-\frac{3}{2}}\exp(-2n_0) \gg  \Delta_0 e^{-\Delta_0^{a-2}},
\end{equation}
and hence instead of Equation (68), we attain: \begin{align}
&\E\left[\left(\mathscr{D}_0^-(\left|\mathscr{M}_0\right|^2-u_0)^2+\epsilon_0\right)\mathscr{Y}_{0}^{2k_{0,\mathbf{u}}}\right]=\\&\nonumber\exp\left(O\left(\Delta_0^{-\frac{1}{2}}\left(\E\left[\mathscr{Y}_0^{2k_{0,\mathbf{u}}}\right]\right)\right)\right)\E\left[\mathscr{Y}_{0}^{2k_{0,\mathbf{u}}}\mathbf{1}(\left|\mathscr{M}_0\right|^2 \in [u_0+\Delta_0^{-1}, u_0+2\Delta_0^{-1}])\right].
\end{align}
Using  \eqref{totalmoment} to bound the moment $\E[\mathscr{Y}_0^{2k_{0,\mathbf{u}}}],$ we see that
\begin{align}
&\E\left[\left(\mathscr{D}_0^-(\left|\mathscr{M}_0\right|^2-u_0)^2+\epsilon_0\right)\mathscr{Y}_{0}^{2k_{0,\mathbf{u}}}\right]=\exp\left(O\left(\Delta_0^{-\frac{1}{4}}\right)\right)\E\left[\mathscr{Y}_{0}^{2k_{0,\mathbf{u}}}\mathbf{1}(\left|\mathscr{M}_0\right|^2 \in [u_0, u_0+\Delta_0^{-1}])\right]).
\end{align}
\end{proof}
The expectations in \eqref{eqn: steinhaus expect} associated to the approximations $\mathscr{D}_j^{\pm}$ to the indicator functions for partial sums up to $p\le T_j$ for $j\ge 1$ are considerably more involved, because the restriction on the length of twists for Proposition \ref{Prop:Twistmoll} means we cannot take long enough Dirichlet polynomials to approximate the indicator functions for all possible values of $\mathscr{Y}_j.$ Clearly, the distribution of $\cos\left(\theta_p\right)$ is symmetric about $0$, meaning that, without the restriction of the indicator function, a jump of approximately $\alpha\left(n_j-n_{j-1}\right)$ for the walk  to go from the interval $\left[L_{j-1},U_{j-1}\right]$ at $n_{j-1}$ to $\left[L_j,U_j\right]$ at $n_j$ is about as likely as a fall of $-\alpha\left(n_j-n_{j-1}\right).$
Thus the indicator function must be approximated significantly beyond this range, which supports the choice of $X_j$ in Equation \eqref{chooseXeqn}. Since the random variable $\mathscr{Y}_j$ may not lie in the range $X_j$ of the target interval, where $\mathscr{D}_j^-$ resembles an indicator function, we must use Markov's inequality to show these large values are unlikely. This leads to the following proposition.
\begin{prop}\label{Prop:SteinhausprobsJ}
			Let $(\mathscr{N}_j, 1\leq j\leq \mathscr J)$ be independent Gaussian random variables with mean $0$ and variance $\sigma_j^2$ as given in \eqref{eqn:Sigmaj}.
We have
\begin{equation}\label{jfar} \E\left[\mathscr{Y}_j^{2k_{j,\mathbf{u}}} \left(\mathscr{D}_j^-(\mathscr{Y}_j-u_j)^2+\epsilon_j\right)\frac{(\mathscr{Y}_j-u_j)^{2X_j}}{X_j^{2X_j}}\right]\ll \frac{\E\left[\mathscr{N}_j^{2k_{j,\mathbf{u}}}\mathbf{1}(\mathscr{N}_j\in [u_j , u_j+\Delta_j^{-1}]) \right]}{2^{X_{j}}}.
\end{equation}
\begin{equation}\label{jclose}
		\E\left[\mathscr{Y}_j^{2k_{j,\mathbf{u}}} \left(\mathscr{D}_j^-(\mathscr{Y}_j-u_j)^2+\epsilon_j\right)\right]=\E\left[\mathscr{N}_{j}^{2k_{j,\mathbf{u}}}\mathbf{1}(\mathscr{N}_j \in [u_j, u_j+\Delta_j^{-1}])\right]\left(1+O\left(\Delta_j^{-\frac{1}{4}}\right)\right).
\end{equation}
	\end{prop}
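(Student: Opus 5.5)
The plan is to reduce both estimates to statements about a single Steinhaus increment $\mathscr{Y}_j$ under a polynomial tilt, and then compare with the Gaussian $\mathscr{N}_j$ through moment matching. The tilt $\mathscr{Y}_j^{2k}$ with $k=k_{j,\mathbf{u}}=\lfloor\kappa^* u_j\rfloor$ moves the bulk of the tilted measure to values near $u_j$. For the Gaussian this is explicit: the density $y^{2k}e^{-y^2/2\sigma_j^2}$ peaks at $y=\sigma_j\sqrt{2k}$. Since $\sigma_j^2\approx (n_j-n_{j-1})/2$ and $u_j\approx\kappa^*(n_j-n_{j-1})$, we get $2k\sigma_j^2\approx u_j^2$, so the peak sits at $u_j$ up to lower-order terms. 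The tilted Gaussian has fluctuations of order $\sigma_j=O(\sqrt{n_j-n_{j-1}})$ around $u_j$, and Gaussian tails beyond that.

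\textbf{Step 1 (moments of the Steinhaus increment).} I will expand every polynomial expression in $\mathscr{Y}_j$ that appears, namely $\mathscr{Y}_j^{2k}\mathscr{D}_j^-(\mathscr{Y}_j-u_j)^2(\mathscr{Y}_j-u_j)^{2X_j}$. Its total degree is at most $2k+200X_j\Delta_j^{3a}$, which is $\ll \alpha^2(n_j-n_{j-1})^{10^4}$. The key input is a moment comparison from Appendix \ref{Sec:Steinprob}: for every $m\le (n_j-n_{j-1})^{10^4}\alpha^2$,
\[
\E[\mathscr{Y}_j^m]=\E[\mathscr{N}_j^m]\bigl(1+O(m^2 e^{-n_{j-1}})\bigr).
\]
This should follow by counting the diagonal pairings in the expansion over primes $p\ge T_{j-1}$. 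Off-perfect-matching terms cost a factor $\sum_p p^{-2}\ll e^{-e^{n_{j-1}}}$ each, and the $\cos2\theta_p/2p$ term contributes only to the variance at relative order $e^{-e^{n_{j-1}}}$. The error is negligible against the degrees because $e^{n_{j-1}}$ is doubly exponentially larger than any power of $n_j-n_{j-1}$. The polynomial coefficients are bounded by Lemma \ref{Lem: indicators}, which lets me replace $\mathscr{Y}_j$ by $\mathscr{N}_j$ inside both expectations. The additive error is at most the sum of absolute coefficients times $\E[|\mathscr{N}_j|^{\deg}]e^{-n_{j-1}}$, and I will check that this is $\ll \Delta_j^{-1/4}$ times the main term.

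\textbf{Step 2 (\eqref{jclose}).} With $\mathscr{N}_j$ in place, I split the range into $|y-u_j|\le X_j$ and its complement. Inside the range, Lemma \ref{Lem: indicators} sandwiches $\mathscr{D}_j^-(y-u_j)^2$ between indicators of $[u_j+\Delta_j^{-a},u_j+\Delta_j^{-1}-\Delta_j^{-a}]$ and $[u_j,u_j+\Delta_j^{-1}]$, up to $e^{-\Delta_j^{a-2}}$. The tilted density is essentially flat on a window of length $\Delta_j^{-1}\ll \sigma_j$, since its log-derivative at $u_j$ is $O(1/\sigma_j^2\cdot\text{dist})$. Hence the loss from the $\Delta_j^{-a}$ edges is relative $O(\Delta_j^{1-a})$. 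Also, the $\epsilon_j$ term gives $\epsilon_j\E[\mathscr{N}_j^{2k}]$, which is at most $e^{-\Delta_j}\sigma_j\Delta_j$ times the main term. Outside the range I use Lemma \ref{Lem: Dbound}: $|\mathscr{D}_j^-(y)|\le|2y/X_j|^{100X_j\Delta_j^{3a}}$. This is beaten by the Gaussian tail $e^{-(y-u_j)^2/2\sigma_j^2}$ relative to the tilted peak, because $X_j\ge 100\cdot10^4\log_{j+1}T\cdot$ and $X_j^2/\sigma_j^2\gg X_j\Delta_j^{3a}\log(y/X_j)$ requires $X_j\gg \sigma_j^2\Delta_j^{3a}$-type bookkeeping.

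\textbf{Main obstacle.} The last point is the delicate one. Here $X_j^2/\sigma_j^2\asymp \alpha^2(n_j-n_{j-1})$ while $X_j\Delta_j^{3a}\asymp\alpha(n_j-n_{j-1})\log^{150}_{j+1}T$. Since $n_j-n_{j-1}\asymp \mathbf{s}\log_{j+1}T$, the Gaussian decay alone may not dominate near $|y-u_j|\approx X_j$. My first attempt is to exploit the tilt decay on a larger scale, using that for $|y-u_j|\ge X_j\ge 100\,u_j$ the ratio $(y/u_j)^{2k}e^{-(y^2-u_j^2)/2\sigma_j^2}$ is superexponentially small in $y^2/\sigma_j^2$. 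If necessary I would enlarge the threshold to a power $X_j\log^{C}_{j+1}T$ for the tail integral only. \eqref{jfar} is then handled the same way. On $|y-u_j|\le X_j/2$ the factor $(y-u_j)^{2X_j}/X_j^{2X_j}\le 4^{-X_j}$ gives the $2^{-X_j}$ saving over the main term from Step 2. On $|y-u_j|>X_j/2$ the tilted Gaussian mass is $e^{-cX_j^2/\sigma_j^2}$ relative to the peak, which again dominates the polynomial growth of $\mathscr{D}_j^-$ and of $(y-u_j)^{2X_j}/X_j^{2X_j}$. The bookkeeping of these competing exponents is where I expect the real work.
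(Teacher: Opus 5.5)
Your ``main obstacle'' is real, and the proposal does not get past it. So \eqref{jfar} and the upper half of \eqref{jclose} remain unproved; the lower half of \eqref{jclose} does not need the tail, which is non-negative.

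With $X_j$ as in \eqref{chooseXeqn} one has $X_j\asymp(\alpha+1)\sigma_j^2$. At distance $\asymp X_j$ from $u_j$ the tilted Gaussian weight therefore saves only $e^{-O(X_j^2/\sigma_j^2)}=e^{-O((\alpha+1)X_j)}$ relative to the peak. Lemma \ref{Lem: Dbound}, by contrast, lets $|\mathscr{D}_j^-|^2$ be as large as $e^{cX_j\Delta_j^{3a}}$ there. Even the sharper bound $|\mathscr{D}_j^-(x)|\le\Delta_j^{4a}e^{2\pi\Delta_j^{2a}|x|}$ implied by the coefficient bounds leaves an exponent $\asymp\Delta_j^{2a}X_j$, and $\Delta_j^{2a}$ is far larger than $\alpha+1$.

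Neither of your remedies fixes this.
\begin{itemize}
\item Your first remedy rests on a false claim. The ratio $(y/u_j)^{2k}e^{-(y^2-u_j^2)/2\sigma_j^2}$ decays like $e^{-cy^2/\sigma_j^2}$, which is ordinary Gaussian decay, not superexponential. It overtakes $|2(y-u_j)/X_j|^{200X_j\Delta_j^{3a}}$ only once $|y-u_j|$ exceeds $\sigma_j(X_j\Delta_j^{3a})^{1/2}$ (up to logarithms), which is a factor $\asymp\Delta_j^{3a/2}(\alpha+1)^{-1/2}$ beyond $X_j$.
\item Your second remedy cannot be done ``for the tail integral only''. The approximation property of Lemma \ref{Lem: indicators} holds only on $|x|\le X_j$. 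Moving the splitting point to $X_j'>X_j$ leaves the range $X_j<|x|\le X_j'$, where you again have only the growth bounds.
\end{itemize}

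In fact the gap cannot be closed with this $X_j$ from the stated properties alone. Perturb $\mathscr{D}_j^-$ by $\pm\sum_{L\le l<100X_j\Delta_j^{3a}}(2\pi\Delta_j^{2a}x)^l/l!$ with $L=\lceil 8\pi e\Delta_j^{2a}X_j\rceil$. This changes it by at most $2\cdot 4^{-L}$ on $|x|\le X_j$ and respects the coefficient bounds up to a factor $1+\Delta_j^{-4a}$. Yet for one of the two signs it inflates the Gaussian version of the left side of \eqref{jclose} by a factor of order $e^{c\Delta_j^{2a}X_j-C(\alpha+1)X_j}$, coming from the range $\mathscr{N}_j-u_j\in[11X_j,12X_j]$.

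What does work is to enlarge the parameter $X_j$ itself, both in the construction of $\mathscr{D}_j^{\pm}$ and in the Markov weight, to $X_j\ge C(\alpha+1)\sigma_j^2\Delta_j^{3a}$. Then $X_j^2/\sigma_j^2\gg X_j\Delta_j^{3a}$ and your tail estimate goes through. The new degree $\asymp(\alpha+1)(n_j-n_{j-1})\Delta_j^{6a}$ still fits the $\alpha$-separable budget $\alpha^2(n_j-n_{j-1})^{10^4}$. But that is the proposition with a different $X_j$.

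For comparison, the paper treats the tail as follows. It applies Lemma \ref{Lem: Dbound}, adds the mirror term $(2(\mathscr{Y}_j+u_j)/X_j)^{2r_j}$ so that only even powers with positive coefficients remain, dominates these by Gaussian moments via Lemma \ref{Cummom}, and concludes with Minkowski and \eqref{bound1}. This route is better than yours in that it needs only one-sided moment domination. However, since $r_j\ge 100X_j\Delta_j^{3a}$, \eqref{bound1} forces $X_j>8\cdot 10^4\sigma_j^2\Delta_j^{3a}$, which \eqref{chooseXeqn} does not provide. So the written argument runs into exactly the obstruction you identified, and the cure is the same enlargement of $X_j$.

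Separately, your Step 1 replaces $\mathscr{Y}_j$ by $\mathscr{N}_j$ inside the whole signed polynomial, controlling errors through absolute values of coefficients. That absolute majorant can be $e^{O(\Delta_j^{4a}\sigma_j^2)}$ times the main term, so you need moment agreement to that relative precision at orders $\asymp X_j\Delta_j^{3a}$.
\begin{itemize}
\item This precision is available for $j\ge 2$, where $\log T_{j-1}\ge\log T/(\mathscr{R}(\alpha^2+1)\log_2^{\mathbf{s}}T)$.
\item It is not available for $j=1$, whose primes start at $\exp(\log_3^{1/1000}T)$.
\item Odd moments of $\mathscr{Y}_j$ do not vanish, because of the $\cos 2\theta_p/2p$ terms, so the error has to be additive rather than relative.
\end{itemize}
The paper sidesteps high-order two-sided matching. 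Near $u_j$ it uses the pointwise sandwich of Lemma \ref{Lem: indicators} and a Gaussian comparison for the tilted indicator, as in \cite[Lemma 8]{AB24}.
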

 
\begin{proof}
We start with the inequality \eqref{jfar}, which is the most involved. The expectation is dominated by the region where $\left|\mathscr{Y}_j-u_j\right|>X_j.$ Here, there is little information about $\mathscr{D}_j^-$; it ceases to approximate the indicator function for an interval for these large values. In Section 7.3 of \cite{ABR20} these large values are handled via the Cauchy-Schwarz inequality. However, this would disrupt the tilting and ultimately prevent us from achieving a good constant in Theorem \ref{LBLD}. Instead, we rely on the bounds on the coefficients and use Lemma \ref{Lem: Dbound} for such large inputs into the polynomials $\mathscr{D}^{\pm}.$

The left-hand side of \eqref{jfar} is
\begin{align}
\label{eqn:LargeSteinhaus}=&\E\left[\mathscr{Y}_j^{2k_{j,\mathbf{u}}} \left(\mathscr{D}_j^-(\mathscr{Y}_j-u_j)^2+\epsilon_j\right)\frac{(\mathscr{Y}_j-u_j)^{2X_j}}{X_j^{2X_j}}\mathbf{1}(|\mathscr{Y}_j-u_j|< X_j)\right]+\\
&\nonumber\E\left[\mathscr{Y}_j^{2k_{j,\mathbf{u}}} \left(\mathscr{D}_j^-(\mathscr{Y}_j-u_j)^2+\epsilon_j\right)\frac{(\mathscr{Y}_j-u_j)^{2X_j}}{X_j^{2X_j}}\mathbf{1}(|\mathscr{Y}_j-u_j|\ge X_j)\right].
\end{align}
The first expectation is negligible, and this can be verified when we turn to prove Equation \eqref{jclose}. For the second expectation, we can use Lemma \ref{Lem: Dbound} to bound it as
\begin{equation}
\ll \E\left[\mathscr{Y}_j^{2k_{j,\mathbf{u}}} \left(\frac{2(\mathscr{Y}_j-u_j)}{X_j}\right)^{200X_j\Delta_j^{3a}}\frac{(\mathscr{Y}_j-u_j)^{2X_j}}{X_j^{2X_j}}\right].
\end{equation}
By adding a non-negative quantity, we may bound this as:

\begin{equation}
\le \frac{1}{2^{2X_j}}\E\left[\mathscr{Y}_j^{2k_{j,\mathbf{u}}} \left(\frac{2(\mathscr{Y}_j-u_j)}{X_j}\right)^{2r_j}+\mathscr{Y}_j^{2k_{j,\mathbf{u}}}\left(\frac{2(\mathscr{Y}_j+u_j)}{X_j}\right)^{2r_j}\right],
\end{equation}
where $r_j= X_j+100X_j\Delta_j^{3a}$. This expansion now has the benefit of just involving even moments of $\mathscr{Y}_j$ with positive coefficients. Using Lemma \ref{Cummom}, as well as the symmetry in the distribution of $\mathscr{N}_j$, we see this is at most:
\begin{equation}
\frac{1}{2^{2X_j}}\E\left[\mathscr{N}_j^{2k_{j,\mathbf{u}}} \left(\frac{2(\mathscr{N}_j-u_j)}{X_j}\right)^{2r_j}+\mathscr{N}_j^{2k_{j,\mathbf{u}}}\left(\frac{2(\mathscr{N}_j+u_j)}{X_j}\right)^{2r_j}\right]=\frac{1}{2^{2X_j-1}}\E\left[\mathscr{N}_j^{2k_{j,\mathbf{u}}}\left(\frac{2(\mathscr{N}_j+u_j)}{X_j}\right)^{2r_j}\right].
\end{equation}
 We recall the following Gaussian moments:
\begin{equation}
\E\left[\mathscr{N}_j^{2k_{j,\mathbf{u}}+2r_j}\right]=\sigma_j^{2(k_{j,\mathbf{u}}+r_j)} \frac{(2k_{j,\mathbf{u}}+2r_j)!}{2^{k_{j,\mathbf{u}}+2r_j} (k_{j,\mathbf{u}}+r_j)!},\qquad \E[\mathscr{N}_j^{2k_{j,\mathbf{u}}}]=\sigma_j^{2k_{j,\mathbf{u}}} \frac{(2k_{j,\mathbf{u}})!}{2^{k_{j,\mathbf{u}}} k_{j,\mathbf{u}}!},
\end{equation}
with the variance $\sigma_j^2$ given in Equation \eqref{eqn:Sigmaj}.
Minkowski's inequality for the shifted Gaussian moments now yields: \begin{align}
\label{eqn: 342}
&\frac{1}{2^{2X_j-1}}\E\left[\mathscr{N}_j^{2k_{j,\mathbf{u}}}\left(\frac{2(\mathscr{N}_j+u_j)}{X_j}\right)^{2r_j}\right]\le \sigma_j^{2k_{j,\mathbf{u}}}\frac{(2k_{j,\mathbf{u}})!}{2^{k_{j,\mathbf{u}}+2X_j-1}k_{j,\mathbf{u}}!} \times\Big(\frac{2\left(\sqrt{2 \sigma_j^2(k_{j,\mathbf{u}}+r_j)}+u_j\right)}{ X_j}\Big)^{2r_j}.
\end{align}
We must compare this to the upper bound in Equation \eqref{jfar}, which is \begin{equation}\label{integrandnormal}
		\frac{\int^{u_j+\Delta_j^{-1}}_{u_j}z^{2k_{j,\mathbf{u}}}e^{-\frac{z^2}{2\sigma_j^2} } \rd z}{2^{X_{j}}\sqrt{2\pi}\sigma_j }.
\end{equation} 
Since $z\in [u_j,u_j+\Delta_j^{-1}]$, the integrand is \begin{align}
	u_j^{2k_{j,\mathbf{u}}}e^{-\frac{u_j^2}{2\sigma_j^2}}&\exp\left(2k_{j,\mathbf{u}}\log(1-\frac{u_j-z}{u_j})+\frac{u_j^2-z^2}{2\sigma_j^2}\right)=\\\nonumber
	u_j^{2k_{j,\mathbf{u}}}e^{-\frac{u_j^2}{2\sigma_j^2}}&\exp\left((u_j-z)(\frac{u_j+z}{2\sigma_j^2}-\frac{k_{j,\mathbf{u}}}{u_j})+O\left(\frac{k_{j,\mathbf{u}}\Delta_j^{-2}}{u_j^2}\right)\right)\\&\nonumber\ge \frac{1}{2} u_j^{2k_{j,\mathbf{u}}}e^{-\frac{u_j^2}{2\sigma_j^2}}.
\end{align} 
For the last bound, we used that $\frac{u_j+z}{2\sigma_j^2}=\kappa^*+O(1)$ by Equation \eqref{eqn:barrier} and the fact that  $\frac{k_{j,\mathbf{u}}}{u_j}$ is also $\kappa^*+O(1)$ by Equation \eqref{keqn}. Hence, the upper bound in Equation \eqref{jfar} is greater than: \begin{equation}
	\frac{u_j^{2k_{j,\mathbf{u}}}e^{-\frac{u_j^2}{2\sigma_j^2}}}{2^{X_{j}+1}\Delta_j \sqrt{2\pi}\sigma_j}. 
\end{equation}
Using Stirling's approximation, we have \begin{align}
	\sigma_j^{2k_{j,\mathbf{u}}}\frac{(2k_{j,\mathbf{u}})!}{2^{k_{j,\mathbf{u}}}k_{j,\mathbf{u}}!}&\ll
	 \left(\frac{\sigma_j^2 k_{j,\mathbf{u}}}{e}\right)^{k_{j,\mathbf{u}}}\\
	&\nonumber\ll e^{-\frac{u_j^2}{2\sigma_j^2}} u_j^{2k_{j,\mathbf{u}}} e^{\frac{u_j^2}{2\sigma_j^2}-k_{j,\mathbf{u}}} \left(\frac{2\sigma_j^2 k_{j,\mathbf{u}}}{u_j^2}\right)^{k_{j,\mathbf{u}}}\\&\nonumber\ll e^{-\frac{u_j^2}{2\sigma_j^2}} u_j^{k_{j,\mathbf{u}}} e^{\frac{U_j-L_{j-1}}{2\sigma_j^2}(U_j-L_{j-1}-2\kappa^*\sigma_j^2)}\left(\frac{u_j(L_j-U_{j-1}) }{2\sigma_j^2k_{j,\mathbf{u}} }\right)^{-k_{j,\mathbf{u}}}.  
\end{align}
Since by Equation \eqref{chooseXeqn}, $X_j\gg \frac{k_{j,\mathbf{u}}}{\alpha},$ we see  \begin{equation}
k_{j,\mathbf{u}}<\frac{r_j\kappa^*}{\exp(10^4)},\quad U_{j}-L_{j-1}<\frac{r_j}{\exp(10^4)}	,
\end{equation} and so the contribution in Equation \eqref{eqn: 342} is

\begin{align}
	\ll& \frac{\Delta_j\sigma_j}{2^{X_j}}\E\left[\mathscr{N}_{j}^{2k_{j,\mathbf{u}}}\mathbf{1}(\mathscr{N}_j \in [u_j, u_j+\Delta_j^{-1}])\right] \\&\nonumber\left( \left(\frac{u_j(L_j-U_{j-1}) }{2\sigma_j^2k_{j,\mathbf{u}} }\right)^{-\frac{\kappa^*}{\exp(10^4)}} e^{\frac{U_j-L_{j-1}-2\kappa^*\sigma_j^2}{\exp(10^4)\sigma_j^2}}\Big(\frac{\sqrt{2 \sigma_j^2(k_{j,\mathbf{u}}+r_j)}+u_j}{ X_j}\Big)^{2}
	\right)^{r_j}.
\end{align}
From Equations \eqref{eqn:barrier} and \eqref{keqn}, we see \begin{equation}
	\left(\frac{u_j(L_j-U_{j-1}) }{2\sigma_j^2k_{j,\mathbf{u}} }\right)^{-\frac{\kappa^*}{\exp(10^4)}}\le  \left(1+\frac{10^5\log_{j+1}T}{\sigma_j^2\kappa^*}\right)^{\frac{\kappa^*}{\exp(10^4)}}\le 1.1, 
\end{equation}
whilst $e^{\frac{U_j-L_{j-1}-2\kappa^*\sigma_j^2}{\exp(10^4)\sigma_j^2}}\le 2.$

Finally, from the choice of $X_j$ in Equation \eqref{chooseXeqn}, we see that, for $T$ sufficiently large, \begin{equation}\label{bound1}
X_j>20\left(\sqrt{2\sigma_j^2(k_{j,\mathbf{u}}+r_j)}+(U_j-L_{j-1}+1)\right),
\end{equation}
so that the last term in \eqref{eqn: 342} is $\le 2^{-r_j}$. Thus,
\begin{align}
\nonumber \E\left[\mathscr{N}_j^{2k_{j,\mathbf{u}}}\left(\frac{2(\mathscr{N}_j+u_j)}{X_j}\right)^{2r_j}\right]&\le\E\left[\mathscr{N}_{j}^{2k_{j,\mathbf{u}}}\mathbf{1}(\mathscr{N}_j \in [u_j, u_j+\Delta_j^{-1}])\right] 2^{-X_j-r_j},\end{align}
for $T$ sufficiently large, where we used that $\sqrt{n_j-n_{j-1}}\le 2^{n_j-n_{j-1}-\frac{1}{2}}.$ This completes the proof of  \eqref{jfar}.

We now prove Equation \eqref{jclose}.
The expectation can be split into two cases with $\mathbf{1}(|\mathscr{Y}_j-u_j|\le X_j)$ and $\mathbf{1}(|\mathscr{Y}_j-u_j|> X_j)$.
For the lower bound for $\mathscr{D}^-$, we may clearly discard the second expectation as it is non-negative.
We bound it nevertheless, since it would be needed for an upper bound when dealing with $\mathscr{D}^+$, which is necessary for the fourth moment in the Paley-Zygmund inequality.

The first expectation $\E\left[\mathscr{Y}_j^{2k_{j,\mathbf{u}}} \left(\mathscr{D}_j^-(\mathscr{Y}_j-u_j)^2+\epsilon_j\right)\mathbf{1}(|\mathscr{Y}_j-u_j|\le X_j)\right]$ is handled as in the proof of Proposition \ref{Prop:Steinhausprobsinit} to express it as: \begin{equation}
\E\left[\mathscr{Y}_{j}^{2k_{j,\mathbf{u}}}\mathbf{1}(\mathscr{Y}_j \in [u_j, u_j+\Delta_j^{-1}])\right]\exp\left(O\left(\Delta_j^{-\frac{1}{4}}\right)\right).
\end{equation}
The tight bounds on $\mathscr{Y}_j^{2k_{j,\mathbf{u}}}$, when $\mathscr{Y}_j$ is restricted to the interval $[u_j, u_j+\Delta_j^{-1}]$, are sufficient for a Gaussian comparison as in \cite[Lemma 8]{AB24} for $j\ge 1$, to rewrite this as
\begin{equation}\E\left[\mathscr{N}_{j}^{2k_{j,\mathbf{u}}}\mathbf{1}(\mathscr{N}_j \in [u_j, u_j+\Delta_j^{-1}])\right]\left(1+O\left(\Delta_j^{-\frac{1}{4}}+\epsilon_j\Delta_j\log_{j+2} T\right)\right).\end{equation}

Finally, we may bound the expectation with $\mathbf{1}(|\mathscr{Y}_j-u_j|> X_j)$:
\begin{equation}
\E\left[\mathscr{Y}_j^{2k_{j,\mathbf{u}}} \left(\mathscr{D}_j^-(\mathscr{Y}_j-u_j)^2+\epsilon_j\right)\mathbf{1}(|\mathscr{Y}_j-u_j|> X_j)\right] \le \E\left[\mathscr{Y}_j^{2k_{j,\mathbf{u}}} \left(\mathscr{D}_j^-(\mathscr{Y}_j-u_j)^2+\epsilon_j\right)\frac{\left(\mathscr{Y}_j-u_j\right)^{2X_j}}{X_j^{2X_j}}\right] ,
\end{equation}
for which we apply \eqref{jfar}.  

We can now bound all expectations in \eqref{eqn: steinhaus expect}. Using Proposition \ref{Prop:SteinhausprobsJ}, the main term is
\begin{align}
\E&\left[ \left|\zeta M\right|^2 Y_{0}^{2k_{0,\mathbf{u}}}\left(\mathscr{D}_0^-(Y_0-u_0)^2+\epsilon_0\right)\prod_{1\le j\le \mathscr{J}}Y_{j}^{2k_{j,\mathbf{u}}} \left(\mathscr{D}^-_j(Y_j-u_j)^2+\epsilon_j\right)\mathbf{1}\left(|Y_j-u_j|\le X_j\right)\right]\nonumber \\&\gg \frac{\log T}{\log T_{\mathscr{J}}}\E\left[\mathscr{Y}_{0}^{2k_{0,\mathbf{u}}}\mathbf{1}(\left|\mathscr{M}_0\right|^2 \in [u_0, u_0+\Delta_0^{-1}])\right]\E\left[\prod^{\mathscr{J}}_{j=1}\mathscr{N}_{j}^{2k_{j,\mathbf{u}}}\mathbf{1}(\mathscr{N}_j \in [u_j, u_j+\Delta_j^{-1}]) \right].
\end{align}

Using Lemma \ref{Lem:MAGIC}, we see that

\begin{align}
	\frac{1}{\mathfrak{c}_{\mathbf{u}}} \E&\left[ \left|\zeta M\right|^2 Y_0^{2k_{0,\mathbf{u}}}\left(\mathscr{D}_0^-(Y_0-u_0)^2+\epsilon_0\right)\prod_{1\le j\le \mathscr{J}} Y_j^{2k_{j,\mathbf{u}}}\left(\mathscr{D}^-_j(Y_j-u_j)^2+\epsilon_j\right)\mathbf{1}\left(|Y_j-u_j|\le X_j\right)\right]\nonumber \\&\gg_\alpha \frac{\log T}{\log T_{\mathscr{J}}}\E\left[\left(\frac{\mathscr{Y}_{0}}{z_0}\right)^{2k_{0,\mathbf{u}}}\mathbf{1}(\left|\mathscr{M}_0\right|^2 \in [u_0, u_0+\Delta_0^{-1}])\right]\E\left[\prod^{\mathscr{J}}_{j=1}\mathbf{1}(\mathscr{N}_j \in [u_j, u_j+\Delta_j^{-1}]) \right],
	\end{align}
	where for convenience we set $z_0=-\frac{1}{2}\log { u_0}.$

This expectation is easiest to handle by performing the calculations over the Gaussian variables $\mathscr{N}_j$ with $j\ge 1$ first. For a fixed $u^*_0\in [L_0,U_0],$ we sum over all combinations $\mathbf{u}=(u_0,...,u_{\mathscr{J}})$ in $ \mathfrak{I}^-$, to attain:
\begin{align}
\sum_{\mathbf{u}\in \mathfrak{I}^-;u_0=u^*_0}\frac{1}{\mathfrak{c}_{u}} \E&\left[ \left|\zeta M\right|^2 Y_0^{2k_{0,\mathbf{u}}}\left(\mathscr{D}_0^-(Y_0-u_0^*)^2+\epsilon_0\right)\prod_{1\le j\le \mathscr{J}}Y_j^{2k_{j,\mathbf{u}}} \left(\mathscr{D}^-_j(Y_j-u_j)^2+\epsilon_j\right)\mathbf{1}\left(|Y_j-u_j|\le X_j\right)\right]\nonumber\\&\gg \E\left[\left(\frac{\mathscr{Y}_{0}}{z^*_0}\right)^{2k_{0,\mathbf{u}}}\mathbf{1}(\left|\mathscr{M}_0\right|^2 \in [u_0^*, u_0^*+\Delta_0^{-1}])\right]\E\left[\prod^{\mathscr{J}}_{j=1}\mathbf{1}\left(z^*_0+\sum^j_{l=1}\mathscr{N}_l \in [L_j,U_j]\right) \right]\label{eqn:firstcalc},
\end{align}
where $z_0^*=-\frac{1}{2}\log u_0$ and we have a slight abuse of notation justified since $k_{0,\mathbf{u}}=\left\lceil \kappa^*z_0^*\right\rceil$ is independent of $u_1,...,u_{\mathscr{J}}.$

It is an easy exercise using the tilt and barrier to perform the expectation over the normal variables in the second expectation:
\begin{equation} \label{eqn:Firstgauss}
\asymp_\alpha  \P\Big(z^*_0+\sum^{\mathscr{J}}_{l=1}\mathscr{N}_l \in [L_{\mathscr{J}},U_{\mathscr{J}}]\Big),
\end{equation}

Since $\sum^{\mathscr{J}}_{l=1}\mathscr{N}_l$ is a centred Gaussian with variance $\sigma^2=\sum_{p\in \cup^{\mathscr{J}}_{j=1}\mathscr{P}_j} \frac{1}{2p}+\frac{1}{4p^2}$, the above is
\begin{equation}
\label{eqn:Polytilt}
\asymp_\alpha  \int^{\infty}_{L_{\mathscr{J}}}\frac{e^{-\frac{(z-z^*_0)^2}{2\sigma^2}}}{\sigma}\rd z\nonumber  
\asymp_\alpha  \frac{e^{\frac{-\left(L_{\mathscr{J}}-z^*_0\right)^2}{2\sigma^2} }}{\sigma}.
\end{equation}

We must now reconcile the two random variables $\mathscr{Y}_0$ and $\mathscr{M}_0$ in \eqref{eqn:firstcalc} to evaluate the expectation.
Firstly, we need to convert the polynomial tilt $\mathscr{Y}_0^{2k_{0,\mathbf{u}}}$ into an exponential tilt.
Observe that: \begin{equation}
-\log \left|\mathscr{M}_0\right|=\label{eqn:mlog} \sum_{k\ge 1}\sum_{p\in \mathscr{P}_0} \frac{\cos k\theta_p}{p^{\frac{k}{2}}}\quad \in[\mathscr{Y}_0-10, \mathscr{Y}_0+10],
\end{equation}
by bounding the terms with $k\ge 3$ in the sum pointwise. This range is sufficiently small that we may convert the tilt into an exponential by Taylor expansion:
\begin{equation}\label{eqn:exptilt}
\left(\frac{\mathscr{Y}_0}{z^*_0}\right)^{2k_{0,\mathbf{u}}} \asymp_\alpha  \exp\left(2k_{0,\mathbf{u}}\left(\frac{ \mathscr{Y}_0}{z^*_0}-1\right)\right) .
\end{equation}
Combining this with Equations \eqref{eqn:Firstgauss} and  \eqref{eqn:Polytilt} gives that the right-hand side of the bound in Equation \eqref{eqn:firstcalc} is
\begin{equation}
 \frac{e^{\frac{-\left(L_{\mathscr{J}}-z^*_0\right)^2}{2\sigma^2} }}{\sigma} 
\E\Big[\exp\Big(2k_{0,\mathbf{u}}\left(\frac{\mathscr{Y}_0}{z^*_0}-1\right)\Big)\mathbf{1}\left(\left|\mathscr{M}_0\right|^2 \in \left[e^{-2z^*_0}, e^{-2z^*_0}+\Delta_0^{-1}\right]\right)\Big].
\end{equation}
Using this, Equation \eqref{eqn:mlog} and the definition of $L_{\mathscr{J}}$, as well as the restrictions of $|\mathscr{M}_0|^2$ given by the indicator function, the above simplifies to
\begin{align}\label{eqn:MidGauss}
\asymp_\alpha& \exp\left(-2k_{0,\mathbf{u}}\right)\frac{e^{-\frac{\left(V^{*}-z^*_0\right)^2}{n_{\mathscr{J}}-n_0}}}{\sqrt{n_{\mathscr{J}}-n_0}}\cdot\E\big[\exp\left(-2\kappa^\ast\log\left|\mathscr{M}_0\right|\right)\mathbf{1}(\left|\mathscr{M}_0\right|^2 \in [e^{-2z^*_0}, e^{-2z^*_0}+\Delta_0^{-1}])\big],
\end{align}

To evaluate the expectation, note that the random variables $(\theta_p, {p\text{ prime}})$ remain independent in the tilted measure: $$\widetilde{\E}\left[\bullet\right]=\frac{\E\left[\bullet \exp\left(-2\kappa^\ast\log\left|\mathscr{M}_0\right|\right)\right]}{ \E\left[\exp\left(-2\kappa^\ast\log\left|\mathscr{M}_0\right|\right)\right]},$$
with the associated tilted probability of an event $A$ defined as $\widetilde{\P}(A)=\widetilde{\E}\left[\mathbf{1}(A)\right].$
It is then more convenient to work with the variable
\begin{equation}\label{eqn:R}
R_0=\log\left|\mathscr{M}_0\right|+\kappa^\ast n_0.
\end{equation}
The expectation becomes
\begin{equation}\label{Ceqn}
e^{2\kappa^{\ast2}n_0}\E\left[\exp\left(-2\kappa^\ast R_0\right)\right]\cdot \widetilde{\P}\left(R_0\in\left[\kappa ^\ast n_0-z^*_0,\kappa^* n_0-z^*_0+\delta_0\right]\right),
\end{equation}
where the increments $$\delta_0=\delta_0(z^*_0,\Delta_0)=\log\left(1+\Delta_0^{-1}e^{2z^*_0}\right)$$ vary mildly due to the mapping in Equation \eqref{eqn:R}.
Using Lemma 12 from \cite{AB24}, we have \begin{equation}
\E\left[\exp\left(-2\kappa^\ast R_0\right)\right]\asymp_\alpha a_\alpha e^{\gamma\alpha^2} e^{- \kappa^{\ast2}n_0}.
\end{equation}
Since we have tilted to favour the range, it is easy to approximate the $\widetilde{\P}$-probability in \eqref{Ceqn} at each $z^*_0$ following \cite[Lemma 18]{ABR20}, which enables us to approximate the sum of the independent variables at each prime with a single random variable with a Gaussian distribution.
Under the mapping \eqref{eqn:R}, the condition $u^*_0\in [L_0,U_0]$ becomes $R_0\in \left[0, n_0^{\frac{2}{3}}/2\right].$
Using this in Equation \eqref{eqn:MidGauss} and performing the calculation over the Gaussian convolution from the intervals $p\le T_0$ and $p\in (T_0,T_{\mathscr{J}}]$ we attain: 
	\begin{equation}
\E\left[\left|\zeta M\right|^2 \mathbf{1}\left(G_{\mathscr{J}}\right)\right]\asymp_\alpha a_\alpha e^{\gamma\alpha^2} \frac{e^{-\frac{V^{*2}}{\log\log T_{\mathscr{J}}}}}{\sqrt{\log\log T_{\mathscr{J}}}}.\end{equation}
Equation  \eqref{2twisteqn} follows by substituting the definition of $V^*$ and $T_{\mathscr J}$.

The proof of Equation \eqref{notwisteqn} and \eqref{4twisteqn} are similar. For  \eqref{4twisteqn} we  calculate the fourth twisted moment as in the proof of Equation \eqref{2twisteqn}, proceeding similarly to Equation \eqref{eqn:boundindic}, but using Equation \eqref{4} for the fourth twisted mollified moment to pass to the Steinhaus model, in place of Equation \eqref{2}.
Meanwhile, for Equation \eqref{notwisteqn}, we use the Mean Value Theorem for Dirichlet polynomials in place of Equation \eqref{2} in the above proof  now yields Equation \eqref{notwisteqn}, which completes the proof of Proposition \ref{prop:goodprobs}.\end{proof}
\begin{rem}
	\rm
	We believe that the truncation $T^\ast=T^{\frac{1}{\mathscr{R}(\alpha^2+1)}}$ is the best possible with our techniques, and so the constant $K_\alpha$ cannot be improved beyond the shape in Equation \eqref{ca}. To see this, write $P_j(S_j-S_{j-1})$ for any possible Dirichlet polynomial representing a tilt and the restriction in the interval $\mathscr{P}_j$.  In our proof, we have taken the tilt  to depend on the interval $\mathbf{u}\in\mathfrak{I}^{\pm}.$ Here, $P_j(S_j-S_{j-1})=(S_j-S_{j-1})^{2k_{j,\mathbf{u}}}D^{\pm}_j((S_j-S_{j-1})-u_j)^2,$ which favours jumps consistent with the partial sum over the interval $\mathscr{P}_j$ lying in the relevant boundaries.
		We now argue that we must have
		\begin{equation}
			\label{eqn: deg Pj}
			\deg P_j\gg \alpha^2(n_j-n_{j-1}).
		\end{equation}
		We now consider the contribution to the expectation $\E\left[ \left|P_j(S_j-S_{j-1})\right|\right]$ from different ranges of values for $S_j-S_{j-1}$. To be a good approximation to the probability we remain within the barrier, a large contribution must come from when $S_j-S_{j-1}\approxeq u_j$. However, this is less likely than smaller values of $S_j-S_{j-1}$ due to Gaussian decay, and so $P_j$ must have a high degree to make the interval dominate the expectation.

		More concretely, for $x\approx u_j$  in the target increment for the interval $\mathscr{P}_j$ and $y\approx \frac{\alpha(n_j-n_{j-1})}{2}$ (say), we should have
		\begin{equation}\label{eqn:GenTilt}\frac{e^{-\frac{x^2}{n_{j}-n_{j-1}}}}{\sqrt{n_{j}-n_{j-1}}}\int^{x+\Delta_j^{-1}}_x P_j(z)\rd z\gg \frac{e^{-\frac{y^2}{n_{j}-n_{j-1}}}}{\sqrt{n_{j}-n_{j-1}}}\int^{y+\Delta_j^{-1}}_y P_j(z)\rd z.\end{equation}
		The ratio of the Gaussian factors is approximately $e^{-3\alpha^2(n_j-n_{j-1})/4}.$
		We use the basis of shifted Chebyshev polynomials $\left\{T_n\left(\frac{  z'-\alpha(n_j-n_{j-1})/2}{\alpha(n_j-n_{j-1})/100}\right)\right\}$  for the polynomial $\mathfrak{F}_j(z')=\int^{z'+\Delta_j^{-1}}_{z'} P_j(z)\rd z$ and the (shifted) orthogonality relations of these Chebyshev polynomials to relate the coefficients in this basis to $$\sup_{y\approx \frac{\alpha(n_j-n_{j-1})}{2}} \mathfrak{F}_j(y).$$
		Using a pointwise bound for $\mathfrak{F}_j(x)$ when $x$ is close to the target jump, one can thus show that the minimum degree  of any such polynomial used in the twist must obey \eqref{eqn: deg Pj}.
	
	\end{rem}

\appendix
\section{Second and fourth twisted mollifier formulae}\label{Sec:Twistmoll}

\begin{proof}[Proof of Proposition \ref{Prop:Twistmoll}]
We start by proving Equation \eqref{2}.
In \cite{HB85}, twisted moments of the zeta function by the square of the absolute value of a Dirichlet polynomial are considered:

\begin{equation}
I=\int^T_0 \left|\zeta\left(\frac{1}{2}+it\right)\right|^2 \left|A\left(\frac{1}{2}+it\right)\right|^2 \rd t, \qquad A(s)=\sum_{m\le M} a(m)m^{-s},
\end{equation}
with $a(m)\ll_\epsilon  m^\epsilon.$
They prove the following theorem.
\begin{thm}[Theorem 1 in \cite{HB85}]
\begin{equation}
I=T\sum_{h,k\le M} \frac{a(h)\overline{a(k)}}{[h,k]}\left(\log \frac{T(h,k)}{2\pi [h,k]}+2\gamma-1\right)+\mathscr{E},
\end{equation}
with $\mathscr{E}=\mathscr{E}_1+\mathscr{E}_2, $ with $\mathscr{E}_1\ll_B T(\log T)^{-B}$ for any $B>0$ and $\mathscr{E}_2\ll_\epsilon M^2T^\epsilon$ for any $\epsilon>0.$
\end{thm}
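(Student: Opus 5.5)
The plan is to reduce $I$ to an average of $|\zeta|^2$ against the oscillatory factors $(h/k)^{\ii t}$, and then evaluate each such average by the classical method for the mean square of $\zeta$, through Estermann's function.

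\textbf{Expansion.} Expanding $|A(\frac12+\ii t)|^2$ gives
\begin{equation*}
I=\sum_{h,k\le M}\frac{a(h)\overline{a(k)}}{\sqrt{hk}}\int_0^T\left|\zeta\left(\tfrac12+\ii t\right)\right|^2\left(\frac{k}{h}\right)^{\ii t}\rd t .
\end{equation*}
Each inner integral depends only on $h/k$. Writing $h=(h,k)h'$ and $k=(h,k)k'$ with $(h',k')=1$, the task is to show, uniformly for $h',k'\le M$,
\begin{equation*}
\int_0^T\left|\zeta\left(\tfrac12+\ii t\right)\right|^2\left(\frac{k'}{h'}\right)^{\ii t}\rd t=\frac{T}{\sqrt{h'k'}}\left(\log\frac{T}{2\pi h'k'}+2\gamma-1\right)+E(h',k').
\end{equation*}
Note that $\sqrt{hk}\,\sqrt{h'k'}=[h,k]$ and $h'k'=[h,k]/(h,k)$. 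So summing the main terms against $a(h)\overline{a(k)}/\sqrt{hk}$ gives exactly the main term of the statement.

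\textbf{Smoothing.} First I insert a smooth weight $w(t/T)$ that approximates $\mathbf{1}_{[0,T]}$ up to edges of length $T(\log T)^{-B-1}$. The edge pieces are bounded using the classical mean value $\int_{T_1}^{T_1+H}|\zeta|^2\ll H\log T$ for short intervals $H\ge T^{1/2+\epsilon}$, together with $|A|^2\ll M^{1+\epsilon}$. When $M$ is small this contributes to $\mathscr{E}_1$; when $M$ is large it is absorbed into $M^2T^\epsilon$.

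\textbf{Diagonal terms.} Next I use the smoothed approximate functional equation
\begin{equation*}
\left|\zeta\left(\tfrac12+\ii t\right)\right|^2=2\,\re\sum_{m,n}\frac{1}{\sqrt{mn}}\left(\frac{n}{m}\right)^{\ii t}V\left(\frac{2\pi mn}{t}\right)+O(t^{-1/2}),
\end{equation*}
and integrate term by term against $w(t/T)(k'/h')^{\ii t}$. The diagonal terms are those with $mk'=nh'$, that is $m=h'l$ and $n=k'l$. They give
\begin{equation*}
\frac{2}{\sqrt{h'k'}}\int w\sum_l \frac{1}{l}\,V\left(\frac{2\pi h'k'l^2}{t}\right)\rd t .
\end{equation*}
A Mellin contour shift evaluates the $l$-sum as $\log\sqrt{t/(2\pi h'k')}+\gamma+O(t^{-1/2+\epsilon})$. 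Integrating $\log t$ over $[0,T]$ produces the $-1$.

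\textbf{Off-diagonal terms.} The remaining terms are those with $mk'\ne nh'$. Integrating in $t$ and using the decay of $\hat w$ shows that everything is negligible unless $|mk'-nh'|\ll mk'\,T^{\epsilon-1}$, which forces $mk'-nh'=r$ with $r$ small. The surviving sum is a shifted convolution
\begin{equation*}
\sum_r\ \sum_{mk'-nh'=r}\frac{F(m,n)}{\sqrt{mn}}
\end{equation*}
for a smooth weight $F$. I would evaluate it via Estermann's function $D(s,\bar h'/k')=\sum_n d(n)e(n\bar h'/k')n^{-s}$: its functional equation and the Weil bound for the Kloosterman sums arising from the dual side. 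This is Heath-Brown's route, and equivalently one could start from Atkinson's formula.

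\textbf{Main obstacle.} I expect the off-diagonal step to be the main difficulty. The polar part of Estermann's function must combine with the diagonal to give exactly the constant $2\gamma-1$, with no leftover main term depending on $h',k'$. The remaining dual terms must be bounded by $T^{1/2+\epsilon}(h'k')^{1/2}$ or better. After summing over $h,k\le M$ against $a(h)\overline{a(k)}/\sqrt{hk}$, with $a(m)\ll m^\epsilon$, this would give $\mathscr{E}_2\ll M^2T^{\epsilon}$. If that route proves awkward, I would fall back on applying Atkinson's explicit formula to $\int|\zeta|^2(k'/h')^{\ii t}$ directly, where the oscillatory error series can be handled by the first derivative test.
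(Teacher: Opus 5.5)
The paper does not prove this statement. It is quoted from \cite{HB85} and used as a black box, so your sketch has to stand on its own.

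\textbf{What is correct.} Expanding $|A|^2$, the identities $\sqrt{hk}\sqrt{h'k'}=[h,k]$ and $h'k'=[h,k]/(h,k)$, and the diagonal residue $2\bigl(\tfrac12\log\frac{t}{2\pi h'k'}+\gamma\bigr)$ are all right. That residue integrates to exactly $\log\frac{T}{2\pi h'k'}+2\gamma-1$.

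\textbf{Where it fails.} Neither of your two error estimates delivers $\mathscr{E}_1+\mathscr{E}_2$.
\begin{itemize}
\item Your edge bound $\max|A|^2\int_{\mathrm{edge}}|\zeta|^2\ll M^{1+\epsilon}T(\log T)^{-B}$ gives only $T^{5/4+\epsilon}(\log T)^{-B}$ at $M=T^{1/4}$. That is far above both $T(\log T)^{-B}$ and $M^2T^{\epsilon}$. The same failure occurs throughout $(\log T)^{C}\le M\le T^{1-2\epsilon}$, which is exactly the range that matters.
\item A per-pair bound $T^{1/2+\epsilon}(h'k')^{1/2}$ does not sum to $M^2T^{\epsilon}$. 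Since $(h'k')^{1/2}/\sqrt{hk}=1/(h,k)$, the total is $T^{1/2+\epsilon}\sum_{h,k\le M}(hk)^{\epsilon}/(h,k)\asymp M^{2+2\epsilon}T^{1/2+\epsilon}$. This is a factor $T^{1/2}$ too large and would give an asymptotic only for $M\le T^{1/4-\epsilon}$.
\end{itemize}

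\textbf{What is missing.} In the range where the statement has content, there is no off-diagonal problem at all.

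First, the statement is trivial unless $M\le T^{1/2-\delta}$ with $\delta=\delta(\epsilon)$ small. Cauchy--Schwarz with the fourth moment of $\zeta$ and the mean value theorem for $A^2$ give $I\ll(T+T^{1/2}M)T^{\epsilon}$, and the main term is $\ll T^{1+\epsilon}M^{\epsilon}$.

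Second, take $M\le T^{1/2-\delta}$ and a weight smooth at scale $T^{1-\eta}$ with $\eta<\delta/2$. Any pair with $nk'\ne mh'$ and $mn\ll T^{1+\epsilon}$ (the effective support of the approximate functional equation) satisfies $|\log(nk'/mh')|\gg\min\bigl(1,(mn\,h'k')^{-1/2}\bigr)\gg T^{-1+\delta-\epsilon}$. Repeated integration by parts then makes every off-diagonal term $O(T^{-A})$. So your shifted convolution sums are empty. The Estermann-function and Kloosterman machinery you single out as the main obstacle is what one needs to pass $M\approx T^{1/2}$, not to prove this statement. Your own diagonal computation already yields the entire main term, so no polar term from $D(s,\bar h'/k')$ has to be matched.

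Third, treat the edges by positivity rather than by $\max|A|^2$. Sandwich $\mathbf{1}_{[0,T]}$ between smooth weights $w_-\le\mathbf{1}_{[0,T]}\le w_+$ that differ on a set of measure $H=T^{1-\eta}$, and apply the smoothed formula to both. The edge cost is then the difference of the two main terms, $\ll H\log T\sum_{h,k\le M}|a(h)a(k)|/[h,k]\ll T^{1-\eta+2\epsilon}$, with no power of $M$ lost.
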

The same proof shows that
if we define the coefficients of the mollifier such that \begin{equation}
M(t)=\sum_n \frac{u(n)}{n^s},
\end{equation}
and $Q$ is $\alpha$-separable
then $\int^{T}_0\left|\zeta M \right|^2Q(t)^2 \rd t$ is
\begin{align}
T\sum_{c,g} b(c,g)\sum_{\substack{m=cf,n=gh\\p|fh\Rightarrow p\in \mathscr{P}_j}}   \frac{u(f)u(h)}{[m,n]} \left(\log \frac{T[m,n]}{2\pi (m ,n)} +2\gamma-1\right)&+O(T\log^{-100(\alpha^2+1)}Tb(1,1)).
\end{align}

The calculations to prove Equation \eqref{2} follow that of the proof of Theorem 1.5 in \cite{AC25}: using Rankin's trick to lift the restriction on the number of prime factors in each interval allowed in the support of the coefficients comprising the mollifier and handling the logarithm in the same way, and then evaluating the unrestricted sum.
We split the logarithm as in the proof of Theorem 1.5 in \cite{AC25} to express this as \begin{equation}\label{eqn: splitlog}
TI_1(\mathbf{b})+Tb(1,1)O\left(I_2(\mathbf{b})\right) +O(T\log^{-100(\alpha^2+1)}Tb(1,1)),
\end{equation} where \begin{equation}\label{I1eqn}
I_1(\mathbf{b})=\sum_{c,g} \frac{b(c,g)}{[c,g]} \left(\log\left(\frac{T[c,g]}{2\pi(c,g)}\right)+2\gamma-1\right)\sum_{\substack{m=cf,n=gh\\p|fh\Rightarrow p\in \mathscr{P}_j}}   \frac{u(f)u(h)[c,g]}{[m,n]}
\end{equation}
and
\begin{equation}
I_2(\mathbf{b})=\sum_{\substack{p|cg\Rightarrow p\le T_{\mathscr{J}}\\\Omega_0(cg)\le \log^{\frac{1}{100}}T\\ \Omega_j(cg)\le  \alpha^2(n_j-n_{j-1})^{10^4}\forall 1\le j\le \mathscr{J}\\ (c,g)=1}} \frac{1}{[c,g]}\left|\sum_{\substack{m=cf,n=gh\\p|fh\Rightarrow p\in \mathscr{P}_j}}   \frac{u(f)u(h)[c,g]}{[m,n]} \log\left( \frac{[m,n](c,g)}{(m ,n)[c,g]}\right)\right|.
\end{equation}
We bound these sums in the following Lemma:
\begin{lem}\label{lem: Evaluate} The quantities $I_1(\mathbf{b})$ and $I_2(\mathbf{b})$ as defined above obey the following bounds: \begin{equation}\label{I1evaleqn}
I_1(\mathbf{b})\asymp \frac{\log T}{\log T_{\mathscr{J}}}b(1,1) ,\qquad I_2(\mathbf{b})\ll 1.
\end{equation}
\end{lem}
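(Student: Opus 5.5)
The proof starts by factorising the inner sum over $f,h$ into a product over the intervals $\mathscr{P}_j$. Since $M=\prod_j M_j$, $u(n)=\prod_j e_j(n_j)$, where $n_j$ is the $\mathscr{P}_j$-part of $n$. For coprime $c,g$ with prime factors in $\mathscr{P}_j$ we have
\[
\frac{[c,g]}{[cf,gh]}=\prod_{p}\frac{[c_p,g_p]}{[c_pf_p,g_ph_p]}.
\]
This is multiplicative across primes, so the sum over $f,h$ factorises over $j$, up to the truncation $\Omega_j(n)\le 5\alpha^2(n_j-n_{j-1})^{10^5}$ in $e_j$. That truncation is removed by Rankin's trick, as in the proof of Theorem 1.5 of \cite{AC25}. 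We insert the weight $e^{\Omega_j(f)+\Omega_j(h)-5\alpha^2(n_j-n_{j-1})^{10^5}}$ for the discarded terms and bound the result by an Euler product. Because $\sum_{p\in\mathscr{P}_j}1/p\asymp n_j-n_{j-1}$ is far smaller than the threshold $(n_j-n_{j-1})^{10^5}$, the error carries a factor $\exp(-c\,\alpha^2(n_j-n_{j-1})^{10^5})$. Summed over $j$, this is negligible relative to $b(1,1)$, using $|b(c,g)|\le b(1,1)$ and the crude count of admissible $(c,g)$ coming from $\alpha$-separability.

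The untruncated sum can then be evaluated exactly. For $p\nmid cg$ the local factor is
\[
\sum_{f_p,h_p\in\{1,p\}}\frac{\mu(f_p)\mu(h_p)}{[f_p,h_p]}=1-\frac{2}{p}+\frac1p=1-\frac1p .
\]
For $p\mid c$ (so $p\nmid g$, say $c_p=p$) the factor is $(1-1)\cdot(\dots)$-type. Summing over $f_p\in\{1,p\}$ with $h_p$ free gives
\[
\frac{[p,h_p]}{p}-\frac{[p^2,h_p]}{p^2}\cdot\ldots ,
\]
which after a short computation equals $0$ to leading order. More precisely, the factor is $O(1/p)$ in size, with the $h_p=1$ and $f_p=1$ terms cancelling. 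As a result the off-diagonal terms $(c,g)\neq(1,1)$ are suppressed by $\prod_{p\mid cg}O(1/p)$ relative to $b(c,g)/[c,g]$.

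The diagonal term $c=g=1$ therefore gives
\[
b(1,1)\left(\log T+O(1)\right)\prod_{p\le T_{\mathscr J}}\left(1-\frac1p\right)\asymp b(1,1)\frac{\log T}{\log T_{\mathscr{J}}},
\]
by Mertens. The remaining $(c,g)$ contribute at most $b(1,1)\log T\prod_p(1-1/p)\sum_{(c,g)\ne(1,1)}\prod_{p\mid cg}O(p^{-2})$, which is a small fraction of the diagonal since $\sum_p p^{-2}$ converges and the primes start at $e^{e^{-100}}$. This gives the first bound in \eqref{I1evaleqn}. The step I expect to be the main obstacle is establishing this cancellation carefully, with signs, for $p\mid c$ while the factor $\log(T[c,g]/2\pi(c,g))$ is present. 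It should be handled by bounding $\log[c,g]\le \log T^{1/1000}$ uniformly.

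For $I_2$, I write
\[
\log\frac{[m,n](c,g)}{(m,n)[c,g]}=\sum_p \log p\,\bigl(\text{local exponent difference}\bigr),
\]
which is additive over primes. Each term is handled by differentiating the corresponding local Euler factor; this is the standard device $\log p^{k}= \frac{d}{ds}p^{ks}|_{s=0}$. This turns $I_2$ into $\sum_{p}\log p\cdot O(1/p)\cdot(\text{same product})$ restricted to $p\le T_{\mathscr J}$. The product contributes $\prod(1-1/p)\asymp 1/\log T_{\mathscr J}$, while $\sum_{p\le T_{\mathscr J}}\log p/p\asymp \log T_{\mathscr J}$ by Mertens. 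Hence $I_2(\mathbf{b})\ll 1$, with the sum over $(c,g)$ again controlled by $\prod_{p\mid cg}O(p^{-2})$, which is convergent.
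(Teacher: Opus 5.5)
\textbf{Gap in the lower bound for $I_1(\mathbf{b})$.} Your overall route is the paper's route: factorise over the blocks $\mathscr{P}_j$, remove the truncation of $M_j$ by Rankin's trick, evaluate the untruncated Euler products, and finish with Mertens. However, the step meant to give the lower bound in $I_1(\mathbf{b})\asymp \frac{\log T}{\log T_{\mathscr J}}b(1,1)$ fails as written.

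The coefficients $b(c,g)$ have no sign. So you must show the off-diagonal part is strictly smaller than the diagonal, but your estimate only shows it is $O(1)$ times the diagonal. An unspecified constant times $\sum_{(c,g)\neq(1,1)}\prod_{p\mid cg}p^{-2}$ need not be less than $1$. The remark that the primes start at $e^{e^{-100}}$ gives no smallness either, since $e^{e^{-100}}\approx 1$ and $\mathscr{P}_0$ contains $2,3,5,\dots$.

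The missing fact is that the untruncated local factor at a prime $p\mid cg$ is not merely $O(1/p)$ but exactly $0$. If $v_p(c)=a\ge 1$ and $p\nmid g$, the terms for $(f_p,h_p)=(1,1),(p,1),(1,p),(p,p)$ are $1,-\frac1p,-1,\frac1p$, which cancel in pairs. This is the case $r=0$ of the identity \eqref{Seleqn}. Since $M_0$ is untruncated, this forces $c_0=g_0=1$ exactly. For $j\ge 1$, an off-diagonal pair $(c_j,g_j)$ then contributes only through the Rankin error. The block factor is therefore $\mathbf{1}(c_j=g_j=1)+O(e^{-100(n_j-n_{j-1})})$ relative to $\prod_{p\in\mathscr{P}_j}(1-\frac1p)$, and this is what makes the off-diagonal part negligible.

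\textbf{Two smaller corrections.} First, when summing these Rankin errors over $(c,g)$, a crude count of admissible pairs is useless, since $\mathscr{P}_j$ contains about $T_j/\log T_j$ primes. You must use the weight $1/[c,g]$ already present in $I_1(\mathbf{b})$, via $\sum_{(c_j,g_j)}1/[c_j,g_j]\le e^{O(n_j-n_{j-1})}$. Note also that Rankin's bound picks up an $O(1)$ factor at each prime dividing $c_jg_j$. That gives $O(1)^{\omega(c_jg_j)}\le e^{O(\alpha^2(n_j-n_{j-1})^{10^4})}$, which the factor $e^{-5\alpha^2(n_j-n_{j-1})^{10^5}}$ absorbs.

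Second, for $I_2(\mathbf{b})$ the pairs $(c,g)=(p^a,1)$ and $(1,p^a)$ are not $O(p^{-2})$. Differentiating at the prime dividing $cg$ gives the local term $(1-\frac1p)\log p$. These pairs therefore contribute $\asymp\prod_{q\le T_{\mathscr J}}(1-\frac1q)\sum_{p\le T_{\mathscr J}}\frac{\log p}{p}\asymp 1$, the same order as $(c,g)=(1,1)$. Pairs with two distinct primes in $cg$ vanish, up to Rankin errors, by the exact cancellation above. So $I_2(\mathbf{b})\ll 1$ is true, but for this reason rather than the one you give.
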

Combining these bounds and substituting them into Equation \eqref{eqn: splitlog} completes the proof of Equation \eqref{2}.
﻿
We begin evaluating $I_1(\mathbf{b})$ by handling the inner sum \begin{equation}
\sum_{\substack{m=cf,n=gh\\p|fh\Rightarrow p\in \mathscr{P}_j}}   \frac{u(f)u(h)[c,g]}{[m,n]}
\end{equation}
from Equation \eqref{I1eqn}.
﻿
This sum splits as:
\begin{equation}
\label{eqn:mainsplit}\prod_{0\le j \le \mathscr{J}} C^{(1)}_j(c_j,g_j), \quad \text {where } c=\prod_{0\le j \le \mathscr{J}}c_j,\quad g=\prod_{0\le j \le \mathscr{J}}g_j
\end{equation}
with $p|c_jg_j\Rightarrow p\in \mathscr{P}_j,$ and
\begin{equation}
C^{(1)}_j(w,z)=\sum_{\substack{m=wf,n=zh\\ p|fh\Rightarrow p\in \mathscr{P}_j}}   \frac{u(f)u(h)[w,z]}{[m,n]},
\end{equation}
whenever $(w,z)=1, p|wz\Rightarrow p\in \mathscr{P}_j$ and $\Omega_j(wz)\le 2\alpha^2(n_j-n_{j-1})^{10^4}\forall 1\le j\le \mathscr{J}$.
There is no truncation involved for the mollifier in the interval $\mathscr{P}_0$ and so \begin{align}
C_0^{(1)}(w,z)=\sum_{\substack{m=wf,n=zh\\ p|fh\Rightarrow p\in \mathscr{P}_0}}   \frac{\mu(f)\mu(h)[w,z]}{[m,n]}.
\end{align}

Motivated by the generalised Von Mangoldt function (e.g. see \cite{GPY09}), we see that for any set $\mathscr{P}$ of primes and $r\in \Z_{\ge 0}$, if the prime factors of some integers $u$ and $v$ are contained in $\mathscr{P}$, then
\begin{equation}\label{Seleqn}
\sum_{\substack{m=uf\\ n=vh\\p|fh\Rightarrow p\in \mathscr{P} }} \frac{\mu(f)\mu(h)[u,v]}{[m,n]}\log^{r}\left(\frac{[u,v]}{[m,n]}\right)=0,
\end{equation}
whenever the prime factorisations of $u$ and $v$ differ by more than $r$ primes with multiplicity.
Taking $\mathscr{P}=\mathscr{P}_0$ and $r=0,$ we see $C_0^{(1)}(w,z)$ vanishes unless $w=z$ (which must be $1$ as $(w,z)=1$), when multiplicativity yields: \begin{align}\label{C0eqn}
C_0^{(1)}(1,1)&=\sum_{p|fh\Rightarrow p\in \mathscr{P}_0} \frac{\mu(f)\mu(h)}{[f,h]}
=\prod_{p\in \mathscr{P}_0} \left(1-\frac{1}{p}\right)\nonumber.
\end{align}
Using Rankin's trick, we may estimate the terms $C_j^{(1)}(w,z)$ for $1\le j\le \mathscr{J}.$
Because the index for the number of prime factors used in the mollifier $M(t),$ $5\alpha^2(n_j-n_{j-1})^{10^5},$ is sufficiently larger than the index for the number of prime factors allowed in the support of $\alpha$-separable twists, $2\alpha^2(n_j-n_{j-1})^{10^4},$ the same method of proof works as in Lemma 3.6 in \cite{AC25}. This yields:  
\begin{align}
 I_1(\mathbf{b})&=\prod_{p\le T_{\mathscr{J}}}(1-\frac{1}{p})\times\\\nonumber\sum_{\substack{c,g\\ c_0=g_0=1}} \frac{b(c,g)}{[c,g]} \left(\log\left(\frac{T[c,g]}{2\pi(c,g)}\right)+2\gamma-1\right)&\prod_{1\le j\le \mathscr{J}}
\left\{\mathbf{1}(c_j=g_j=1)+O\left(e^{-100(n_j-n_{j-1})}\right)\right\}.
\end{align}    

Moreover, using H\"older's inequality on the final product, we attain: \begin{equation}\label{Chern2aeqn}
I_1(\mathbf{b})\asymp b(1,1)\log T\prod_{p\le T_{\mathscr{J}}}\left(1-\frac{1}{p}\right).
\end{equation}
A simple application of the Prime Number Theorem means we may write the right-hand side of Equation \eqref{Chern2aeqn} as $\asymp b(1,1)\frac{\log T}{\log T_{\mathscr{J}}}$, as claimed.

We now  prove the bound $I_2(\mathbf{b})\ll 1$. If we were able to remove the truncation, then Equation \eqref{Seleqn} would yield that the sum is only supported where the coefficients differ by at most one prime factor, as we see in Section 3.2 of \cite{AC25}. Indeed, the use of Rankin's trick and evaluation follows the proof of Proposition 5 in \cite{AC25} very closely, remarking again that the mollifier uses many more prime factors than the twist, and in place of Equation (3.168) we attain:
\begin{align}
|I_2(\mathbf{b})|\ll \prod_{p\le T_{\mathscr{J}}}\left(1-\frac{1}{p}\right)\sum^{\mathscr{J}}_{j=0} e^{n_j}
\ll \prod_{p\le T_{\mathscr{J}}}\left(1-\frac{1}{p}\right) \log T_{\mathscr{J}}\ll 1.
\end{align}

This completes the proof of Equation \eqref{2}. The proof of Equation \eqref{4} follows the proof of Lemma 9 for the fourth  mollified moment twisted by the square of a Dirichlet polynomial in \cite{ABR20}. The changes required to twist by the square of an $\alpha$-separable polynomial instead of a well-factorable Dirichlet polynomial are superficial, and go through as in the proof of Equation \eqref{2}.
\end{proof}
\section{Probabilities in the Steinhaus model}\label{Sec:Steinprob}
We first compare the moments of $A_{p,1}=\cos(\theta_p)p^{-1/2}$ and $A_{p,2}=\cos(2\theta_p)p^{-1}/2$ to those of the Gaussian variable $N_{p,i}=\nu_ip^{-\frac{i}{2}}Z_{p,i}$, where the variables $Z_{p,i}$ are IID standard Gaussian variables for $i=1,2$ with $\nu_1 =\frac{1}{\sqrt{2}}$ and $\nu_2=\frac{1}{2\sqrt{2}}$.  
\begin{lem}\label{YtoN}
For  $i\in \{1,2\}$ and any $k\in \N,$
\begin{equation}\label{oddmom}
\E[A_{p,i}^{2k-1}]=\E[N_{p,i}^{2k-1}]=0,
\end{equation} 
while for the even moments, we have the bound
\begin{equation}\label{evmom}
\E[A_{p,i}^{2k}]\le \E[N_{p,i}^{2k}].
\end{equation}

\end{lem}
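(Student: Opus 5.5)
The plan is to compute both sides explicitly and compare them term by term.

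\emph{Odd moments.} Since $\theta_p$ is uniform on $\R/2\pi\Z$, the shift $\theta_p\mapsto\theta_p+\pi$ preserves its law and sends $\cos\theta_p\mapsto-\cos\theta_p$. Likewise $\theta_p\mapsto\theta_p+\pi/2$ sends $\cos 2\theta_p\mapsto-\cos2\theta_p$. Hence each $A_{p,i}$ is symmetric, and all its odd moments vanish. The same holds for the centred Gaussians $N_{p,i}$, which gives \eqref{oddmom}.

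\emph{Even moments.} For $i\in\{1,2\}$ the random variable $\cos(i\theta_p)$ has the same law as $\cos\theta$ with $\theta$ uniform. Writing $\cos\theta=(e^{\ii\theta}+e^{-\ii\theta})/2$ and expanding with the binomial theorem, only the middle term survives, so
\begin{equation*}
\E[\cos^{2k}\theta]=\frac{1}{4^k}\binom{2k}{k}=\frac{(2k)!}{4^k (k!)^2}.
\end{equation*}
For the standard Gaussian we have $\E[Z^{2k}]=\frac{(2k)!}{2^k k!}$.

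With $\nu_1^2=1/2$ and $\nu_2^2=1/8$, the scalings match:
\begin{itemize}
\item $\E[A_{p,1}^{2k}]=p^{-k}\frac{(2k)!}{4^k(k!)^2}$ and $\E[N_{p,1}^{2k}]=p^{-k}2^{-k}\frac{(2k)!}{2^kk!}=p^{-k}\frac{(2k)!}{4^kk!}$;
\item $\E[A_{p,2}^{2k}]=p^{-2k}4^{-k}\frac{(2k)!}{4^k(k!)^2}$ and $\E[N_{p,2}^{2k}]=p^{-2k}8^{-k}\frac{(2k)!}{2^kk!}=p^{-2k}\frac{(2k)!}{16^kk!}$.
\end{itemize}
In each case the ratio $\E[A_{p,i}^{2k}]/\E[N_{p,i}^{2k}]$ equals $1/k!\le 1$, which proves \eqref{evmom}.

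There is no real obstacle here. The only points needing care are the correct normalisation of $\nu_i$, so that the variances match: $\E[A_{p,1}^2]=\frac{1}{2p}$ and $\E[A_{p,2}^2]=\frac{1}{8p^2}$. Equality holds for $k=1$, consistent with \eqref{eqn:Sigmaj}.
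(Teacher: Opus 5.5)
Your proof is correct and follows the same route as the paper. Odd moments vanish by the symmetry of $\cos(i\theta_p)$, and the even moments are compared explicitly: $\E[A_{p,i}^{2k}]=\nu_i^{2k}\binom{2k}{k}2^{-k}p^{-ik}$ against $\E[N_{p,i}^{2k}]=\nu_i^{2k}\frac{(2k)!}{2^k k!}p^{-ik}$, whose ratio is $1/k!\le 1$, with your normalisations $\nu_1^2=\tfrac12$ and $\nu_2^2=\tfrac18$ matching the paper's.
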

\begin{proof}[Proof of Lemma \ref{YtoN}.]
From the odd symmetries of the distributions, Equation \eqref{oddmom} is immediate, while for the even moments in \eqref{evmom}, we may assume $k\ge 2,$ since we have constructed the moments to be equal for $k=1.$ Then we have the $2k$-th moments:
\begin{equation}\label{Amom}
\E\left[\frac{\nu_i^{2k}\cos^{2k}(i \theta_p)}{p^{ik}}\right]=\frac{\nu_i^{2k}{2k\choose k}}{2^{k} p^{ik}}.
\end{equation}

But we recall the Gaussian compute the moments of $N_p$ precisely, to attain:
\begin{align}\label{Nmom}
\E[N_{p,i}^{2k}]&= \frac{\nu_i^{2k}(2k)!}{2^kp^{ik}k!}.
\end{align}
Comparing Equations \eqref{Amom} and \eqref{Nmom} completes the proof of inequality \eqref{evmom}.\end{proof}
These bounds translate into bounds on the sum $\mathscr{Y}_j$ from \eqref{eqn: Yj}.
\begin{lem}\label{Cummom} 
Let $1\le j\le \mathscr{J},$ $i=1,2,$ $\mathscr{S}_{j,i}=\sum_{p\in \mathscr{P_j}} A_{p,i}$ and $\mathfrak{N}_{j,i}=\sum_{p\in\mathscr{P}_j }N_{p,i}$.
Then for any $k\in \N$, we have \begin{equation}\label{oddcummom}
\E\left[\mathscr{S}_{j,i}^{2k-1}\right]=\E[\mathfrak{N}_{j,i}^{2k-1}]=0,
\end{equation}
while for the even moments, we have
\begin{equation}\label{evcummom}
\E\left[\mathscr{S}_{j,i}^{2k}\right]\le\E[\mathfrak{N}_{j,i}^{2k}].
\end{equation}
Hence, if $k\ll (n_j-n_{j-1})T_{j-1}^{\frac{1}{100}},$ then \begin{equation}\label{totalmoment}
\E\left[\mathscr{Y}_j^{2k} \right]\ll \E[\mathscr{N}_j^{2k}]\end{equation} for the centred Gaussian $\mathscr{N}_j$ with variance $\sigma_j^2$ defined in Equation \eqref{eqn:Sigmaj} 
\end{lem}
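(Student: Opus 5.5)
The plan is to reduce everything to the single-prime comparison in Lemma \ref{YtoN} by expanding powers multinomially and using independence. Fix $i\in\{1,2\}$ and $1\le j\le\mathscr{J}$. Writing $\mathscr{S}_{j,i}^{m}=\sum_{p_1,\dots,p_m\in\mathscr{P}_j}A_{p_1,i}\cdots A_{p_m,i}$ and grouping by the multiset of primes, each term becomes a product over distinct primes $p$ of $\E[A_{p,i}^{a_p}]$, with exponents $a_p\ge 0$ summing to $m$. The variables $A_{p,i}$ for distinct $p$ are independent, since the $\theta_p$ are. The Gaussian sum $\mathfrak{N}_{j,i}$ has exactly the same expansion, with the same multinomial coefficients, and with $\E[N_{p,i}^{a_p}]$ in place of $\E[A_{p,i}^{a_p}]$.

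For $m=2k-1$ odd, every term contains some odd $a_p$, so \eqref{oddmom} gives \eqref{oddcummom}. For $m=2k$, only terms with all $a_p$ even survive in either expansion. Each factor is nonnegative, and \eqref{evmom} bounds it factorwise, so \eqref{evcummom} follows termwise because all coefficients are positive.

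For \eqref{totalmoment}, write $\mathscr{Y}_j=\mathscr{S}_{j,1}+\mathscr{S}_{j,2}$. Expand $\mathscr{Y}_j^{2k}$ by the binomial theorem. By independence over primes, the mixed moments again factor into single-prime mixed moments $\E[A_{p,1}^{a}A_{p,2}^{b}]$. Here $A_{p,1}$ and $A_{p,2}$ at the same prime are \emph{not} independent, so the factorwise comparison breaks down, and this is the main obstacle.

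I would first try a crude pointwise domination. Since $|A_{p,2}|\le 1/(2p)$, I would bound $|\E[A_{p,1}^aA_{p,2}^b]|\le (2p)^{-b}\E[|A_{p,1}|^a]$. This should show that the terms involving $A_{p,2}$ contribute only a factor $1+O(\text{something small})$ per unit of exponent. That factor would be controlled by $\sum_{p\in\mathscr{P}_j}p^{-1}\cdot p^{-1/2}$-type sums, which are $\ll T_{j-1}^{-1/2}$.

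The hypothesis $k\ll (n_j-n_{j-1})T_{j-1}^{1/100}$ is exactly what makes the accumulated loss, at most $\exp(O(k\,T_{j-1}^{-1/2}/\sigma_j^2))$ up to harmless factors, bounded. With this bound and \eqref{evcummom} for $i=1$, we get $\E[\mathscr{Y}_j^{2k}]\ll \E[\mathfrak{N}_{j,1}^{2k}]\,(1+o(1))^{2k}$.

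It remains to compare with $\mathscr{N}_j$. This holds because $\mathfrak{N}_{j,1}$ is Gaussian with variance $\sum_{p\in\mathscr{P}_j} 1/(2p)\le\sigma_j^2$, and Gaussian even moments are monotone in the variance.

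If the crude domination loses too much, the fallback is to split $\mathscr{Y}_j$ via Minkowski's inequality, $\|\mathscr{Y}_j\|_{2k}\le\|\mathscr{S}_{j,1}\|_{2k}+\|\mathscr{S}_{j,2}\|_{2k}$, and bound each piece by \eqref{evcummom}. The Gaussian bound gives $\|\mathfrak{N}_{j,2}\|_{2k}\ll\sqrt{k}\,T_{j-1}^{-1/2}$, which is tiny relative to $\|\mathfrak{N}_{j,1}\|_{2k}\asymp\sqrt{k}\,\sigma_j$. Raising to the power $2k$ costs a factor $(1+O(T_{j-1}^{-1/2}\sigma_j^{-1}))^{2k}=O(1)$ in the stated range of $k$.
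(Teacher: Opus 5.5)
\paragraph{Even and odd moments.} Your proof of \eqref{oddcummom} and \eqref{evcummom} is the same as the paper's. Both expand multinomially, use independence across primes and the vanishing of odd per-prime moments, and compare factor by factor via \eqref{evmom} with nonnegative coefficients.

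\paragraph{Your primary route to \eqref{totalmoment} fails.} The bound $|\E[A_{p,1}^aA_{p,2}^b]|\le(2p)^{-b}\E[|A_{p,1}|^a]$ throws away the exact vanishing of $\E[A_{p,2}^b]$ for odd $b$, which is the case $a=0$. Suppose you keep the vanishing for odd $a$. Your majorant then sums exactly to $\E[(\mathscr{S}_{j,1}+c_j)^{2k}]$, where
\[
c_j=\sum_{p\in\mathscr{P}_j}\frac{1}{2p}=\tau_{1,j}^2\asymp n_j-n_{j-1}, \qquad \tau_{i,j}^2=\nu_i^2\sum_{p\in\mathscr{P}_j}p^{-i}.
\]
This shift is far larger than the standard deviation. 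Already for $k=1$ the majorant is $\tau_{1,j}^2+\tau_{1,j}^4$, while the true value is $\sigma_j^2=\tau_{1,j}^2+\tau_{2,j}^2$, so the loss factor tends to infinity.

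Your heuristic pairs one $A_{p,2}$ with one $A_{p,1}$ (the ``$p^{-1}\cdot p^{-1/2}$'' sums), but that configuration is identically zero. The map $\theta\mapsto\theta+\pi$ flips $\cos\theta$ and fixes $\cos2\theta$, so every mixed moment with $a$ odd vanishes. The genuinely nonzero corrections are of type $(a,b)=(2,1)$, for example $\E[A_{p,1}^2A_{p,2}]=\frac{1}{8p^2}$. A domination argument that works would have to keep both vanishing rules and bound only configurations with $a_p\ge2$. That requires a combinatorial count you have not set up.

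\paragraph{Your fallback is exactly the paper's proof.} Minkowski together with \eqref{evcummom} gives, for a standard Gaussian $Z$,
\[
\E[\mathscr{Y}_j^{2k}]\le(\tau_{1,j}+\tau_{2,j})^{2k}\E[Z^{2k}]=\Bigl(\frac{(\tau_{1,j}+\tau_{2,j})^2}{\tau_{1,j}^2+\tau_{2,j}^2}\Bigr)^k\E[\mathscr{N}_j^{2k}].
\]
This should be your main argument, not a contingency.

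\paragraph{A caveat that applies equally to the paper.} The factor above equals $\exp\bigl((2+o(1))k\tau_{2,j}/\tau_{1,j}\bigr)$. It is therefore bounded only when
\[
k\ll\tau_{1,j}/\tau_{2,j}\asymp\bigl(T_{j-1}\log T_{j-1}\,(n_j-n_{j-1})\bigr)^{1/2}.
\]
The hypothesis $k\ll(n_j-n_{j-1})T_{j-1}^{1/100}$ guarantees this precisely when $(n_j-n_{j-1})^{1/2}\ll T_{j-1}^{49/100}(\log T_{j-1})^{1/2}$. That holds for $j\ge2$. It fails for $j=1$, where $T_0=\exp((\log_3T)^{1/1000})$ while $n_1-n_0\asymp\log_2T$. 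So your claim ``$=O(1)$ in the stated range'', like the paper's final ``$\sim$'', needs either a smaller range of $k$ or a finer argument when $j=1$.
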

\begin{proof}[Proof of Lemma \ref{Cummom}]
Again by the odd symmetries of the distributions  we see Equation \eqref{oddcummom} is immediate, while for \eqref{evcummom}, we expand out the power to write for $\mathbf n=(n_p, p\in \mathscr{P}_j)$
\begin{equation}
\E\left[\mathscr{S}_{j,i}^{2k}\right]=\sum_{\substack{n_p\ge 0, \forall p \in \mathscr{P}_j\\ \sum_{p\in \mathscr{P}_j} n_p=2k}}{2k \choose \mathbf{n}}\E\left[\prod_{p\in \mathscr{P}_j} A_{p,i}^{n_p}\right].
\end{equation}
The expectation splits by independence, and we use Equation \eqref{oddmom} to show the expectation vanishes unless all the $n_p$ are even.
Hence we may write the sum as
\begin{equation}
\sum_{\substack{2|n_p, \forall p \in \mathscr{P}_j\\ \sum_{p\in \mathscr{P}_j} n_p=2k}}{2k \choose \mathbf{n}}\prod_{p\in \mathscr{P}_j}\E\left[  A_{p,i}^{n_p}\right].
\end{equation}
Using Equation \eqref{evmom}, we may bound the above sum as:
\begin{equation}
\le \sum_{\substack{2|n_p, \forall p \in \mathscr{P}_j\\ \sum_{p\in \mathscr{P}_j} n_p=2k}}{2k \choose \mathbf{n}}\prod_{p\in \mathscr{P}_j}\E\left[ N_p^{n_p}\right].
\end{equation}
Finally, we use Equation \eqref{oddmom} again to show the other choices with some odd values of $n_p$ don't contribute to the sum, so this is
\begin{equation}
\sum_{\substack{n_p\ge 0, \forall p \in \mathscr{P}_j\\ \sum_{p\in \mathscr{P}_j} n_p=2k}}{2k \choose \mathbf{n}}\prod_{p\in \mathscr{P}_j}\E\left[ N_{p,i}^{n_p}\right]=\E[\mathfrak{N}_{j,i}^{2k}].
\end{equation}
We now turn to bound $\E\left[\left(\mathscr{S}_{j,1}+\mathscr{S}_{j,2}\right)^{2k}\right]$ to prove \eqref{totalmoment}.

By observing that $\mathfrak{N}_{j,i}\sim N(0,\tau_{i,j}^{2})$, where \begin{equation}
	\tau_{i,j}^2=v_i^2\sum_{p\in \mathscr{P}_j}  p^{-i},
\end{equation} we obtain \begin{equation}
	\frac{\E\left[\mathfrak{N}_{j,2}^{2k}\right]}{\E\left[\mathfrak{N}_{j,1}^{2k}\right]}=\left(\frac{\tau_{2,j}}{\tau_{1,j}}\right)^{2k}\ll \left(T_{j-1}\log T_{j-1} (n_j-n_{j-1})\right)^{-k}.
\end{equation}

Since $\mathscr{N}_j\sim N(0,\tau_{1,j}^2+\tau_{2,j}^2)$, Minkowski's inequality and the bounds from \eqref{evmom} now yield

\begin{equation}
	\E\left[\left(\mathscr{S}_{j,1}+\mathscr{S}_{j,2}\right)^{2k}\right]\ll  \left(\frac{(\tau_{1,j}+\tau_{2,j})^2}{\tau_{1,j}^2+\tau_{2,j}^2}\right)^{k} \E[\mathscr{N}_j^{2k}]\sim \E[\mathscr{N}_j^{2k}],
\end{equation} 
with the final asymptotic valid since $k\ll (n_j-n_{j-1})T_{j-1}^{\frac{1}{100}}.$
\end{proof}

\bibliographystyle{alpha}
\bibliography{ref.bib}

\end{document}